\newtheorem{theo}{Theorem}[section]
\newtheorem{lem}[theo]{Lemma}
\newtheorem{cor}[theo]{Corollary}
\newcommand{\ra}{\rightarrow}
\theoremstyle{definition}
\newtheorem{defin}[theo]{Definition}
\newtheorem{exa}[theo]{Example}
\theoremstyle{remark}
\newtheorem{rem}[theo]{Remark}
\title{Gambler's ruin estimates on finite inner uniform domains}
\author[1]{Persi Diaconis}
\author[2]{Kelsey Houston-Edwards\thanks{kedwards@olin.edu}}
\author[3]{Laurent Saloff-Coste\thanks{lsc@math.cornell.edu}}
\affil[1]{\textit{\small Departments of Mathematics and Statistics, Stanford University}}
\affil[2]{\textit{\small The Olin College of Engineering}}
\affil[3]{\textit{\small Department of Mathematics, Cornell University}}
\begin{document}

\maketitle

\begin{abstract} 
Gambler's ruin estimates can be viewed as harmonic measure estimates for finite Markov chains which are absorbed (or killed) at boundary points. We relate such estimates to properties of the underlying chain and its Doob transform. Precisely, we show that gambler's ruin estimates reduce to a good understanding of the Perron-Frobenius eigenfunction and eigenvalue whenever the underlying chain and its Doob transform  are Harnack Markov chains.  Finite inner-uniform domains (say, in the square grid $\mathbb Z^n$)
provide a large class of examples where these ideas apply and lead to detailed estimates.
In general, understanding the behavior of the Perron-Frobenius eigenfunction remains a challenge.
\end{abstract}

\section{Introduction}

Two players are involved in a simple fair game that is repeated, independently, many times. Assume that the total amount of money involved is $N$ and that  we follow $X_t$, the amount of money that player $A$ holds at time $t$.  We can view $X_t$ as performing a simple random walk on $\{0,1,\dots, N\}$ with absorbing boundary condition at both ends. The classical gambler's ruin problem asks for the computation of  the probability that $A$ wins (i.e.,  there is an $t$ such that $X_t=N$ and $X_k\neq 0$ for $0\le k\le t$) given that $X_0=x$. Call this probability $u(x)$. Then,  $u(0)=0$, $u(N)=1$, and, for $0<x
<N$, $u(x)=\frac{1}{2}(u(x-1)+u(x+1))$.  In a different language, $u$ is the solution of the discrete Dirichlet problem on $\{0,\dots,N\}$
$$\left\{\begin{array}{cl} \Delta u=0 & \mbox{ on } U=\{1,\dots, N-1\},\\
u=\phi &\mbox{ on } \partial U=\{0,N\},\end{array}\right.$$
 with boundary function $\phi(0)=0$ and $\phi(N)=1$, and Laplacian 
 $$\Delta u(x)=u(x)-\frac{1}{2}(u(x-1)+u(x+1)).$$
 Because the only harmonic functions on the discrete line are the affine functions it follows immediately that $u(x)= x/N.$ For example, if you have \$1 and your opponent has \$99, the chance that you eventually win all the money is 1/100 (see \cite[Chapter 14]{feller} for an inspirational development). This naturally leads to the question: how should the gambler's ruin problem be developed with more players?

 \begin{figure}[h] 
\begin{center}
\begin{tikzpicture}[scale=.2]

\foreach \x in {1,2,3,4,5,6,7,8,9,10,11,12,13,14,15,16,17,18,19,20}
{\draw  (\x,0) -- (0,\x);
\draw (\x,0) -- (\x,20-\x);
\draw (0,\x)--(20-\x,\x);
}
\draw (0,0)--(0,20);
\draw (0,0)-- (20,0);

\foreach \x in {1,2,3,4,5,6,7,8,9,10,11,12,13,14,15,16,17,18,19}
{\draw [fill, blue] (\x,0) circle [radius=.2];\draw  [fill, blue](0,\x)  circle [radius=.2];}
\foreach \x in {1,2,3,4,5,6,7,8,9,10,11,12,13,14,15,16,17,18,19}{\draw [fill, blue] (20-\x,\x) circle [radius=.2];}

\end{tikzpicture}
\caption{The gambler's ruin problem with $3$ players} \label{GR2}

\end{center}
\end{figure}
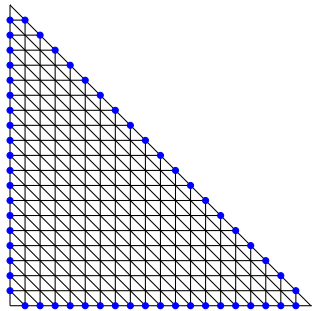

 Thomas Cover in~\cite{Cover1987} gives a multi-player version of the gambler's ruin problem using Brownian motion. It is solved using conformal maps in the $3$-player (i.e., 2-dimensional) case in a short note of Bruce Hajek \cite{Hajek1987} that appears in the same volume as Cover's article. (For another description of 3-player gambler's ruin, see~\cite{ferguson}.) The discrete $3$-player version can be described as follows. Call the players $A,B,C$.  Let $N$ be the total amount of money in the game and $X_*$  be the amount of money that  player $*$ has at a given time so that $X_A+X_B+X_C=N$. At each stage, a pair of players is chosen uniformly at random; then these two players play a fair game and exchange one dollar
 according to the outcome of the game. Standard martingale arguments show that the chance that player $A$,$B$ or $C$ winds up with all the money (given that they start out at $x_1$,$x_2$, and $x_3$) is, respectively, $x_1/N$, $x_2/N$,$x_3/N$. Starting at $N/4$,$N/4$,$N/2$, Ferguson~\cite{ferguson} shows that the chance that $C$ is the first eliminated is asymptotically 0.1421... We consider what happens the first time one of the players is eliminated. How does the money divide up among the remaining two players and how does this depend on the starting position?
 
 From this description it follows that the pair $(X_A, X_B)$ evolves  on 
 $$U=\{(x_1,x_2): 0< x_1,\;0 < x_2,\;x_1+x_2< N\},$$
 with
 \begin{eqnarray*} \partial U&=&\{(x_1,x_2):  x_1=0, 0< x_2< N\}\\&& \bigcup \;\{(x_1,x_2):  x_2=0, 0< x_1<N\}\\&&\bigcup\; \{(x_1,x_2): 0< x_1,0< x_2, x_1+x_2=N\},\end{eqnarray*} according to a Markov kernel given by
  $$K((x_1,x_2),(y_1,y_2))= \left\{\begin{array}{cl} 1/6  & \mbox{ if }  |x_1-y_1|+|x_2-y_2|=1,\\
 1/6 & \mbox{  if  }  x_1-y_1=y_2-x_2=\pm 1,\\
 0& \mbox{ otherwise,}\end{array}\right.$$
 for  pairs $(x_1,x_2)\in U,\; (y_1,y_2)\in U\cup \partial U$. Here, we imagine that this Markov chain starts somewhere in $U$, say at $(x_A,x_B)$, and runs until it first reaches a point on $\partial U$. We are interested in the probability that the exit point is $(y_A,y_B)$ given the starting point  $(x_A,x_B)$.  Contrary to the $1$-dimensional case, there is no easy closed form formula for this problem in dimension $2$ (much less  in dimension higher than $2$ and other variants). Our results, which give two-sided estimates for this problem, are developed in Example~\ref{exa-GR} and summarized in formula~\eqref{GRP}.

These examples are part of a much larger theory known under the complementary names of first passage probabilities, survival probabilities and absorption problems. In the context of classical diffusion processes, this is also related to the study of harmonic measure (see Definition~\ref{defn-harmonic measure}). See \cite{CMSM,Pinsky,Redner} among other basic relevant references.

Let us now abstract the original problem as follows. Instead of a discrete line or triangle, our new setting will be a weighted graph
$(\mathfrak X,\mathfrak E,\pi,\mu)$ where
\begin{itemize}
    \item the set $\mathfrak X$ of vertices is finite or countable,
    \item the set $\mathfrak E$ of edges consists of pairs of vertices, (i.e., subsets of $\mathfrak X$ containing exactly two elements) such that each vertex has finite degree (i.e., it belongs to only finitely many pairs in $\mathfrak E$) and the graph is connected (i.e., there is a path in $\mathfrak E$ connecting any two pairs of vertices)
    \item the function $\pi:\mathfrak X\ra (0,\infty)$ is a positive weight on vertices, and
    \item the function $\mu: \mathfrak E\ra (0,\infty)$ is a positive weight on edges, $\{x,y\}\mapsto \mu_{xy},$ with the property that
\begin{equation}\label{cond-pimu}
\sum_y \mu_{xy}\le \pi(x).\end{equation} \end{itemize}  

It is useful
to extend $\mu$ to the set of all pairs of vertices by setting $\mu_{xy}=0$ when $\{x,y\}\not\in \mathfrak E$. Two vertices $x,y$ satisfying $\{x,y\}\in \mathfrak E$ are called neighbors, which we denote $x \sim y$.  The edge set $\mathfrak E$ induces a distance function  $(x,y)\mapsto d(x,y)$ on $\mathfrak X$. The distance $d(x,y)$ between $x$ and $y$ is the minimal number of edges that have to be crossed to go from $x$ to $y$. We assume throughout that $d(x,y)$ is finite for all pairs of points $x,y\in \mathfrak E$.

This data also  induces a Markov kernel $K=K_{\pi,\mu}$  defined as follows
\begin{equation}\label{def-K}
K(x,y)=\left\{\begin{array}{cl}  \mu_{xy}/\pi(x) & \mbox{ for } y\neq x,\\
1-\left(\sum_{y}\mu_{xy}/\pi(x)\right)& \mbox{ for } y=x.\end{array} \right.
\end{equation}
Note that the pair $(K,\pi)$ is reversible.
 The associated  Laplacian is the operator $\Delta=I-K$ so that
 $$\Delta u(x)=u(x)-\sum_y K(x,y)u(y).$$
 
 Let $U$ be a finite subset of $\mathfrak X$ with the property that any two points $x,y$ in $U$
 can be connected in $U$ by a discrete path, that is, a finite sequence ${(x_0,\dots,x_k)\in U^k}$ with $x_0=x, x_k=y$ and $\{x_i,x_{i+1}\}\in \mathfrak E$, $0\le i\le k-1$.   We call such a subset a finite domain in $(\mathfrak X,\mathfrak E)$. Let $\partial U$ (the exterior boundary of $U$) be the set of vertices in $\mathfrak X\setminus U$ which have at least one neighbor in $U$.
 \begin{defin}[Inner distance] \label{def-dboundary} The smallest integer $k$ for which such a path exists for given $x,y\in U$ is denoted by $d_U(x,y)$. It is is the {\em inner distance} between $x$ and $y$ in $U$. For $x\in U$ and $y\in \partial U$, we set
$$d_U(x,y)= \min\{1+d_U(x,z): z\in U, \{z,y\}\in \mathfrak E\}.$$
 \end{defin}
 
 Let $(X_t)_{t\ge 0}$ denote the Markov chain driven by the Markov kernel $K$, starting from an initial random position $X_0$ in $U$.  This is often called a weighted random walk on the graph $(\mathfrak X,\mathfrak E)$ because, at each step, the walker either stays put or moves from its current position to one of the neighbors according to the kernel $K$.
 
  Let $\tau_U$ be the stopping time 
 $$\tau_U=\inf\{t: X_t\not\in U\}.$$
 Because the chain takes steps of distance at most 1, it must exit $U$ on the boundary (i.e., $X_{\tau_U}\in \partial U$). 
 
 \begin{defin}[Harmonic measure]\label{defn-harmonic measure} Because $X_{\tau_U} \in \partial U$, it make sense to ask for the computation of 
 $$P(x,y)=P_U(x,y) =\mathbf P(X_{\tau_U}=y|X_0=x),$$
 for $ x\in U , y\in \partial U.$
 As a function of $y$, $P(x,y)$ is called the harmonic measure (and as a function of $(x,y)$, it is also known as the Poisson kernel).
 \end{defin}
 
 The notation $P$ is used here in reference to the classical Poisson kernel in the ball of radius $r$ around the origin in $\mathbb R^n$,
 $$ P(x,\zeta)= \frac{r^2-\|x\|^2}{\omega_{n-1} r\|x-\zeta\|^n},\;\;\;x\in B_r=\{z: \|z\|< r, \zeta\in S_r=\{z:\|z\|=r\}, $$
 where $ \|z\|^2=\|(z_1,\dots,z_n)\|^2=\sum_1^nz_i^2$.
 In Euclidean space, the Poisson kernel solves the Dirichlet problem ($\Delta = -\sum_1^n\frac{\partial^2}{\partial x_i^2}$)
 $$\left\{\begin{array}{cl} \Delta u=0 & \mbox{ in } B_r,\\
 u=\phi  & \mbox{ on }  S_r=\partial B_r,\end{array}\right.$$
 in the form $$u(x)=\int_{S_r}P(x,\zeta)\phi(\zeta)d\zeta$$
 where $d\zeta$ is the  $n-1$-surface measure on $S_r$.

Similarly, the kernel $P=P_U$ on $U\times \partial U$ yields the solution of  the discrete Dirichlet problem  
 $$\left\{\begin{array}{cl} \Delta u=0 & \mbox{ in } U\\
 u=\phi  & \mbox{ on } \partial U,\end{array}\right.$$
 in the form $$u(x)=\sum_{y\in \partial U} P_U(x,y)\phi(y)=E_x(\phi(X_{\tau_U})). $$
  Observing that
 $$P_U(x,y)= E_x(\mathbf 1_{\{y\}}(X_{\tau_U}))= \mathbf P(X_{\tau_U}=y|X_0=x),$$
 we are also interested  in  understanding the quantity
 $$P_U(t,x,y)=  \mathbf P(X_{\tau_U}=y \mbox{ and } \tau_U\le t |X_0=x).$$
 
  The goal of this work is to obtain meaningful quantitative estimates for the Poisson kernel and related quantities in the weighted graph context described earlier and under strong   hypotheses on (a) the underlying weighted graph $(\mathfrak X,\mathfrak E,\pi,\mu)$ and (b) the finite domain $U\subset 
 \mathfrak X$.  The hypotheses we require are satisfied for a rich variety of interesting cases. As a test question, consider the problem of giving two-sided estimates (with upper and lower bounds differing only by a multiplicative constant) which hold uniformly for  $(x,y)\in U\times \partial U$  for the discrete Poisson kernel of a lazy simple random walk on $\mathbb Z^n$, $n\ge 1$, when $U=B(o,r)$
 is the graph ball of radius $r$ centered at the origin $o$ in $\mathbb Z^n$. For $n=1$, this is essentially the gambler's ruin problem. 
 
 Various other gambling schemes can be interpreted as random walks on polytopes with different boundaries. For example,~\cite{kmet-petkovsek} treats two gamblers with $n$ kinds of currency as a $n$-dimensional random walk---at each stage, a type of currency is chosen uniformly and then a flipped coin determines the transfer of one unit of currency.
 
 We now give a brief summary of the structure of this article. Section~\ref{sec-BC} introduces basic computations, including Poisson kernels and Green's functions. Section~\ref{sec-ST} discusses the difficulty of trying to solve these types of problems using spectral methods, even when all eigenfunctions are available. Section~\ref{sec-DT} introduces the Doob transform which changes absorbtion problems into ergodic problems. Section~\ref{sec-Harnack} gives the main new results. We introduce the notions of Harnack Markov chains and graphs, which allows us to treat the three-dimensional gambler's ruin starting ``in the middle'' in Example~\ref{exa-GR}. Section~\ref{sec-inner-uniform} specializes to nice domains (inner-uniform domains) where the results of the authors' previous paper~\cite{DHSZ}, \emph{Analytic-geometric methods for finite Markov chains with applications to quasi-stationarity}, can be harnessed. This allows uniform estimates for all starting states, in particular for the three-player gambler's ruin problem.
 
 \section{Basic computations} \label{sec-BC}
 Let us fix a weighted graph $(\mathfrak X,\mathfrak E,\pi,\mu)$ satisfying (\ref{cond-pimu}) and the associated Markov kernel $K$ defined at (\ref{def-K}) as described in the introduction. Let us also fix a finite domain $U$ and set
 $$K_U(x,y)=K(x,y)\mathbf 1_U(x)\mathbf 1_U(y).$$
 Assuming that $\partial U$ is not empty, this is a sub-Markovian kernel in the sense that $\sum_{y: y\sim x}K_U(x,y)\le 1$ for all $x \in U$ and $\sum_{y: y\sim x}K_U(x,y)<1$
 at any point $x\in U$ which has a neighbor in $\partial U$.  For any point $y\in \partial U$, define
$$\nu_U(y)=\{x\in U: \{x,y\}\in \mathfrak E\},$$
to be the set of neighbors of $y$ in $U$.
For any $x,z\in \mathfrak X$, set
 \begin{equation} \label{def-GU}G_U(x,z) = \sum_{t=0}^\infty K_U^t(x,z).\end{equation}
 
\begin{theo} For $x\in U$ and $y\in \partial U$, the Poisson kernel $P_U(x,y)$ is given by
$$P_U(x,y)= \sum_{z\in \nu_U(y)}  G_U(x,z) K(z,y)$$ 
Moreover, we have
$$P_U(t,x,y)= \sum_{z\in \nu_U(y)}  \sum_{\ell=0}^{t-1}K^{\ell}_U(x,z) K(z,y).$$ 
\end{theo}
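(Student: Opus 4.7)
The plan is to decompose the exit event $\{X_{\tau_U} = y\}$ according to the exit time and the state occupied just before exiting. Since the chain takes steps of distance at most $1$ and $y \in \partial U$, the predecessor $X_{\tau_U - 1}$ must lie in $\nu_U(y)$, so for each $t \ge 1$,
\begin{equation*}
\mathbf{P}(X_{\tau_U} = y,\; \tau_U = t \mid X_0 = x)
= \sum_{z \in \nu_U(y)} \mathbf{P}(X_{t-1} = z,\; \tau_U > t-1 \mid X_0 = x)\, K(z,y).
\end{equation*}
The $K(z,y)$ factor comes from the Markov property: conditional on $X_{t-1}=z \in U$, the chain jumps to $y$ with probability $K(z,y)$, and this single step realizes the first exit because $y \in \partial U$.

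Next I would establish the identity
\begin{equation*}
\mathbf{P}(X_\ell = z,\; \tau_U > \ell \mid X_0 = x) = K_U^\ell(x,z) \quad \text{for } x,z \in U,
\end{equation*}
which follows by a direct induction on $\ell$ from the definition $K_U(x,z) = K(x,z)\mathbf{1}_U(x)\mathbf{1}_U(z)$: the event $\{\tau_U > \ell\}$ is $\{X_0,\ldots,X_\ell \in U\}$, and summing transition products of $K$ restricted to trajectories in $U$ is precisely what iterating $K_U$ computes.

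Substituting this identity and summing over $t$ yields both formulas. For the time-restricted Poisson kernel, I would sum $t$ from $1$ to $t$, reindexing $\ell = t-1$ from $0$ to $t-1$:
\begin{equation*}
P_U(t,x,y) = \sum_{s=1}^{t} \sum_{z \in \nu_U(y)} K_U^{s-1}(x,z)\, K(z,y)
= \sum_{z \in \nu_U(y)} \sum_{\ell=0}^{t-1} K_U^\ell(x,z)\, K(z,y).
\end{equation*}
For the full Poisson kernel I would let $t \to \infty$ and exchange the order of summation (all terms are nonnegative, so Tonelli applies without difficulty), recognizing $\sum_{\ell \ge 0} K_U^\ell(x,z) = G_U(x,z)$ by the definition at \eqref{def-GU}.

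The only subtle point is justifying the $\ell = 0$ term: for $\ell = 0$, the event $\{X_0 = z,\; \tau_U > 0\}$ simply equals $\{X_0 = z\}$ since $X_0 \in U$ by assumption, consistent with $K_U^0(x,z) = \mathbf{1}_{\{x=z\}}$. I expect no real obstacle here; the argument is a textbook first-step analysis, and the main thing to be careful about is the bookkeeping for the indexing of the exit time versus the number of steps taken inside $U$.
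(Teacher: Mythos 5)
Your proof is correct and takes essentially the same approach as the paper: decompose the exit event by the exit time and the predecessor $X_{\tau_U-1}\in\nu_U(y)$, identify $\mathbf{P}(X_\ell=z,\tau_U>\ell\mid X_0=x)=K_U^\ell(x,z)$, and sum. You simply spell out the details (the $K_U^\ell$ identity, the reindexing, the $\ell=0$ term, Tonelli) that the paper's one-line proof leaves implicit.
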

\begin{proof} If we start at $x\in U$, in order to exit $U$ at $y$ at time $\tau_U=\ell+1$, we need to reach  a neighbor $z$ of $y$ at time $\ell$ while staying in $U$ at all earlier times and then take a last step to $y$. The probability for that is $$ \sum_{z\in \nu_U(y)} K^{\ell}_U(x,z) K(z,y).$$ 
\end{proof}

For later purposes, it is useful to restate the theorem above using slightly different notation.   First, we equip $U$ with the  measure $\pi|_U$, the restriction of the measure $\pi$ to $U$. Note that $\pi|_U$ is not normalized. The kernel $K_U$ satisfies the (so-called detailed balance) condition
$$k_U(x,y) :=K_U(x,y)/\pi|_U(y)=K_U(y,x)/\pi|_U(x).$$
The iterated kernel $k^t_U$  is the kernel of the sub-Markovian operator $$K^t_Uf(x)=\sum_yK^t_U(x,y)f(y)=\sum _y k^t_U(x,y)f(y)\pi|_U(y)$$ with respect to the measure $\pi|_U$.  Similarly, we set 
$$g_U(x,y):=G_U(x,y)/\pi(y).$$
The detailed balance condition captures the fact that $K^t_U$ is a discrete semigroup of selfadjoint operators on $L^2(U,\pi|_U)$.

Next we introduce the natural  measure on the boundary $\partial U$, $ \pi|_{\partial U}$, the restriction of $\pi$ to $\partial U$.  It simplifies notation greatly to drop the 
reference to $U$ and $\partial U$ and write $\pi|_U=\pi$, $\pi|_{\partial U}=\pi$ unless the context requires the use of the subscripts. 
For any function $f$ in $U\cup \partial U$ and point $y \in \partial U$, we define the interior normal derivative of $f$ at $y$  by
\begin{equation}
\label{eq-norm-der}
\frac{\partial f}{\partial \vec{\nu}_U}(y)= \sum _{x\in U: x\sim y} (f(x)-f(y)) \frac{\mu_{xy}}{\pi(y)}.
\end{equation}
 Now, for each $x\in U$, we view $P(x,\cdot)$ as a probability measure on  $\partial U$ and express the density $p_U(x,\cdot)$ of this probability measure with respect to the reference measure $\pi$ on the boundary, so that $p_U(x,y) = P_U(x,y)/\pi(y)$. Similarly, we set $p_U(t,x,y)=P_U(t,x,y)/\pi(y)$. 
 
 \begin{theo} \label{th-Pab1}
 For $x\in U$ and $y\in \partial U$, the Poisson kernel $P_U(x,y)$ is given by  $P_U(x,y)=p_u(x,y)\pi(y)$ with
$$p_U(x,y)= \frac{\partial_y g_U(x,y)}{\partial \vec{\nu}_U}=\sum_{t=0}^\infty\frac{\partial_y k^t_U(x,y)}{\partial \vec{\nu}_U} .$$ 
Similarly, $P_U(t,x,y)=p_U(t,x,y)\pi(y)$ with
$$p_U(t,x,y)=   \sum_{\ell=0}^{t-1} \frac{\partial_y k^\ell_U(x,y)}{\partial \vec{\nu}_U} .$$ 
\end{theo}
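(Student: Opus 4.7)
The plan is to verify Theorem~\ref{th-Pab1} by direct calculation, starting from the formula of the preceding theorem and unpacking the definitions of $g_U$, the normal derivative~(\ref{eq-norm-der}), and the detailed balance relation $K(z,y) = \mu_{zy}/\pi(z)$. The statement is essentially a rewriting of the previous theorem in terms that make the ``discrete boundary normal derivative of Green's function'' structure transparent, in close analogy with the classical continuous formula $p(x,\zeta) = \partial_\nu G(x,\zeta)$ on the boundary of a smooth domain.

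First I would observe the key simplification: $G_U(x,y) = 0$ whenever $x\in U$ and $y\in \partial U$. Indeed, $K_U(u,v) = K(u,v)\mathbf{1}_U(u)\mathbf{1}_U(v)$ vanishes as soon as one of its arguments lies in $\partial U$, so $K_U^t(x,y)=0$ for every $t\ge 1$, and $K_U^0(x,y) = \delta_{xy} = 0$ as well. Consequently $g_U(x,y)=0$ for $y\in \partial U$, and the discrete normal derivative~(\ref{eq-norm-der}) applied to $z\mapsto g_U(x,z)$ at a boundary point $y$ collapses to
$$\frac{\partial_y g_U(x,y)}{\partial \vec{\nu}_U} = \sum_{z\in \nu_U(y)} g_U(x,z)\,\frac{\mu_{zy}}{\pi(y)}.$$
Substituting $g_U(x,z) = G_U(x,z)/\pi(z)$ and $\mu_{zy}/\pi(z) = K(z,y)$ turns the right-hand side into $\pi(y)^{-1}\sum_{z\in\nu_U(y)} G_U(x,z)K(z,y)$, which by the preceding theorem is exactly $P_U(x,y)/\pi(y) = p_U(x,y)$. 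The series form then follows by inserting the defining expansion~(\ref{def-GU}) of $G_U$ and interchanging the (finite, linear) normal-derivative operation with the absolutely convergent sum over $t$.

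The truncated identity for $p_U(t,x,y)$ is obtained by the same computation with the partial sum $\sum_{\ell=0}^{t-1}K_U^\ell(x,z)$ replacing $G_U(x,z)$; every step above is valid term by term, since each $k_U^\ell(x,\cdot)$ also vanishes on $\partial U$. There is no real obstacle here: the only point that needs a moment of care is the boundary vanishing of $g_U$, which is what allows the diagonal term in~(\ref{eq-norm-der}) to drop out and produces the precise one-to-one matching with the previous theorem's formula.
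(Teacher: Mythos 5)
Your proof is correct and takes the same route as the paper, which presents Theorem~\ref{th-Pab1} as a ``restatement'' of the preceding theorem and then, immediately after, explains exactly the observation you use: $g_U(x,\cdot)$ and $k^t_U(x,\cdot)$ vanish on $\partial U$, so the normal derivative~(\ref{eq-norm-der}) reduces to the weighted sum over interior neighbors. Your direct unpacking — combining $g_U(x,z)=G_U(x,z)/\pi(z)$ with $\mu_{zy}/\pi(z)=K(z,y)$ to recover $\pi(y)^{-1}\sum_{z\in\nu_U(y)}G_U(x,z)K(z,y)$ — supplies the one-line algebra the paper leaves implicit.
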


 A key reason that these formulas are useful is the fact that, because the functions $g_U(x,\cdot)$ and $k^t_U(x,\cdot)$ vanish at the boundary, the ``normal interior derivatives''   
 $\frac{\partial_y g_U(x,y)}{\partial \vec{\nu}_U} $ and $\frac{\partial_y k^t_U(x,y)}{\partial \vec{\nu}_U}$  are actually (weighted) finite sums of the positive values of the  relevant functions,  $g_U(x,\cdot)$ and $k^t_U(x,\cdot)$, over those neighbors of $y$ that are in $U$, i.e.,
 $$\frac{\partial_y g_U(x,y)}{\partial \vec{\nu}_U} = \sum_{z \in U: z \sim y} g_U(x,z)\frac{\mu_{yz}}{\pi(y)}$$
and similarly for $\frac{\partial_y k^t_U(x,y)}{\partial \vec{\nu}_U}$. This means that any two sided estimates on the functions $g_U,k^t_U$ themselves automatically induce two sided estimates for these ``normal interior derivatives'' for the Poisson kernel.
 
 \section{Spectral theory}\label{sec-ST}
 Unfortunately, it not easy to estimate the functions $k^t_U$ and $g_U$.  It is tempting to appeal to spectral theory in this context. The sub-Markovian operator $K_U$ is selfadjoint on $L^2(U,\pi)$ with finite spectrum $\beta_{U,i}$ and associated real eigenfunctions $\phi_{U,i}$. For simplicity, when the domain $U$ is obvious, we write $$\beta_i=\beta_{U,i}, \;\;\ \phi_i=\phi_{U,i}, \;\;(\mbox{for } 0\le i\le |U|-1).$$ We can assume the eigenvalues are ordered
 $$-1\le \beta_{|U|-1}\le \beta_{|U|-2}\le \dots\le \beta_1\le \beta_0\le 1.$$
When $\partial U\neq \emptyset$, the Perron-Frobenius theorem  asserts that $$0< \beta_0<1,\;\;\beta_{|U|-1}\ge -\beta_0,\;\; |\beta_i|< \beta_0,\;\; (\mbox{for } i=1,\dots,|U|-2),$$
and we can choose $\phi_0>0$. Moreover, $\beta_0=-\beta_{|U|-1}$ if and only if the subgraph $(U,\mathfrak E_U)$ of $(\mathfrak X,\mathfrak E)$ is bipartite  and $\sum_{y\sim x}\mu_{xy}=\pi(x) $ for all $x\in U$.  We will normalize all the eigenfunctions by $\pi|_U(|\phi_i|^2)=1$, making them unit vectors in $L^2(U,\pi)$. Note that, by convention, $\phi_i \equiv 0$ in $\mathfrak{X} \setminus U$, so we can equivalently write that $\pi(|\phi_i|^2)=1$.

This gives
\begin{equation} \label{eq-k-eigen}
 k_U^t(x,y)=\sum _{i=0}^{|U|-1} \beta_i^t\phi_i(x)\phi_i(y),
\end{equation}
and
\begin{equation} \label{eq-g-eigen}
  g_U(x,y)= \sum_{i=0}^{|U|-1} (1-\beta_i)^{-1} \phi_i(x)\phi_i(y).  
\end{equation}
Assuming for simplicity that $\beta_0>|\beta_{|U|-1}|$, the first formula yields the familiar asymptotic  
$$k^t_U(x,y)\sim \beta_0^t\phi_0(x)\phi_0(y).$$ The second formula yields almost nothing. The easy fact that $g_U(x,y)$ is positive is not visible from it, even in cases when the eigenvalues and eigenfunctions are known explicitly.

\begin{exa} \label{exa-box1}
In $\mathbb Z^2$, let $\pi$ be a uniform vertex weight (i.e., $\pi(x)\equiv 1$) and set edge weights $\mu_{xy}=1/8$ when $x\sim y$, $x,y\in \mathbb Z^2$. It follows that $K(x,y)$ at (\ref{def-K}) is the Markov kernel of the lazy random walk on $\mathbb Z^2$ (this walk stays put with probability $1/2$ or moves to one of the four neighbors chosen uniformly at random with probability $1/8$). Let $U \subseteq \mathbb Z^2$ be the box $\{-N,\dots,N\}^2$. 
Because of the product structure of both the set $U$ and the kernel $K_U$, we can write down explicitly the spectrum and eigenfunctions.
The eigenfunctions  are  the products
$$ \phi_{a,b}(x_1,x_2)= \frac{1}{N+1}\psi_a(x_1)\psi_b(x_2)  $$ where 
$$ \psi_a(k)= \begin{cases}{cl} \cos\frac{a k\pi }{2(N+1)} & \mbox{ if }  a=1,3, \dots ,2N+1\\
\sin \frac{a k \pi}{2(N+1)} &\mbox{ if } a=2,4,\dots, 2N  \end{cases} $$  
with associated eigenvalues 
$$\omega_{a,b} =  \frac{1}{4}\left( 2+ \cos \frac{a\pi}{2(N+1)}+\cos\frac{b\pi}{2(N+1)}\right)$$ 
when $a,b$ run over $\{1,2,\dots,2N+1\}$. 

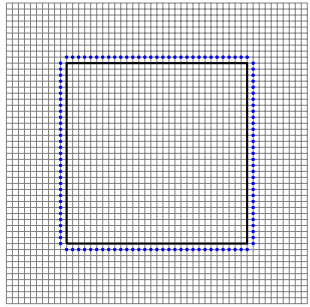
\begin{figure}[h] 
\begin{center}
\begin{tikzpicture}[scale=.08]
\draw [help lines] (0,0) grid (50,50);
\draw [thick] (10,10) -- (40,10) -- (40,40) -- (10,40) -- (10,10);

{\foreach \x in {0,1,2,3,4,5,6,7,8,9,10,11,12,13,14,15,16,17,18,19,20,21,22,23,24,25,26,27,28,29,30}
{\color{blue} \draw [fill] (10+\x,9) circle [radius=.2];
\draw [fill] (10+\x,41) circle [radius=.2];
\draw [fill] (9,10+\x) circle [radius=.2];
\draw [fill] (41,10+\x) circle [radius=.2];
}}

\end{tikzpicture}
\caption{The box $U=\{-N,\dots,N\}^2$ and its boundary. Each point on the boundary has exactly one neighbor in $U$.} \label{N2}
\end{center}
\end{figure}  
Applying\eqref{eq-g-eigen} and~\eqref{eq-norm-der}, we have
\begin{equation}\label{eq-g-box}
\frac{\partial_y g_U(x,y)}{\partial \vec{\nu}_U}  = \sum_{(a,b)\in \{1,\dots, 2N+1\}^2} (1-\omega_{a,b})^{-1} \phi_{a,b}(x)  \frac{\partial_y \phi_{a,b}(y)}{\partial \vec{\nu}_U}.
\end{equation}
To be more explicit, using the obvious symmetries of $U$, let's focus on the case when the boundary point $y=(y_1,y_2)$ is on the vertical, right side of $U$,
that is,  $y=(N+1,y_2)$ for $y_2\in \{-N,\dots,N\}$.  For such point, the neighbor of $y$ in $U$ is the point $\tilde{y}=(N,y_2)$ and so~\eqref{eq-g-box} becomes,
$$
\frac{\partial_y g_U(x,y)}{\partial \vec{\nu}_U}  = \frac{1}{8(N+1)} \sum_{(a,b)\in \{1,\dots, 2N+1\}^2} (1-\omega_{a,b})^{-1} \phi_{a,b}(x) \psi_b(y_2) \psi_a(N).$$
Writing this in a more explicit form, we have
\begin{eqnarray}
\lefteqn{P_U((x_1,x_2),(N+1,y_2))=}   &&\label{spP}\\
&&\frac{1}{4(N+1)^2}\sum_{(a,b)\in \{1,\dots, 2N+1\}^2} \frac{\psi_a(x_1)\psi_b(x_2) \psi_b(y_2) \psi_a(N)}{1-\frac{1}{2}\left(\cos \frac{a\pi}{2(N+1)}+\cos\frac{b\pi}{2(N+1)}\right)} . \nonumber \end{eqnarray}
\end{exa}   
There are several problems with formulas of the type (\ref{spP}).  The first is that  it is rare we can compute all eigenvalues and eigenvectors as in the above example. The second is that all the terms in the formula have roughly similar size and most are oscillating terms 
that change sign multiple times.  The terms that oscillate most are actually given somewhat higher weights in (\ref{spP}).  So, even in the case of the square domain treated above, it is not clear how much information one can extract from (\ref{spP}) except, perhaps, numerically.
   
\section{General results based on the Doob transform}   \label{sec-DT}
  
It is well-known that the Doob-transform technique is a useful tool to study problems involving Markov processes with killing.  We follow closely the notation used in our previous article \cite{DHSZ} which will be used extensively in what follows.

We work in the weighted graph setting introduced in Section \ref{sec-BC} and fix
a finite domain $U\subset \mathfrak X$. The operator associated to the sub-Markovian kernel~$K_U$,
$$ f\mapsto K_Uf=\sum_yK_U(\cdot,y)f(y),$$ 
acting on $L^2(U,\pi)$ admits a Perron-Frobenius eigenvalue $\beta_0$ and eigenfunction $\phi_0$ (because 
$K_U$ is selfadjoint on $L^2(U,\pi)$, right and left eigenvectors are the same). Here, we normalize $\phi_0$ by requiring that $\pi(\phi_0^2)=\pi|_U(\phi_0^2)=1$ (recall that the measure $\pi|_U$ is not normalized).
   
The Doob-transform technique  amounts to considering the Markov kernel
\begin{equation} \label{eq-doob-transform}
   K_{\phi_0}(x,y)=\beta_0^{-1}\phi_0(x)^{-1}K_U(x,y)\phi_0(y) 
\end{equation}
which is reversible with respect to the measure $\pi_{\phi_0}$, where we define $$\pi_{\phi_0}=\phi_0^2\pi|_U.$$
Just as $k^t_U(x,y)=K^t_U(x,y)/\pi(y)$, we set
$$k^t_{\phi_0}(x,y)= \frac{K^t_{\phi_0}(x,y)}{\pi_{\phi_0}(y)}.$$
This is the kernel of the operator $K^t_{\phi_0}$ with respect to its reversible measure $\pi_{\phi_0}.$
It is also clear that
$$k^t_U(x,y)=  \beta_0^t \phi_0(x)\phi_0(y)   k^t_{\phi_0}(x,y) . $$

Our basic assumptions imply that $K_U$ and $K_{\phi_0}$ are irreducible kernels, i.e., for any pair $x,y$, there is a $t=t(x,y)$ such that $K_U^t(x,y)>0$.  If we additionally assume that $K_{\phi_0}$ is aperiodic, this implies that the chain is ergodic. Hence, using these manipulations, we have reduced the study of $K_U^t$ to that of $K^t_{\phi_0}$,
the iterated kernel of an ergodic reversible finite Markov chain. In what follows, we do not assume aperiodicity, but it is often better to assume aperiodicity on the first reading in order to focus on the most interesting aspects of the computations and arguments involved.   This gives the following version of  Theorem \ref{th-Pab1}.
\begin{theo} \label{th-Pab2}
For $x\in U$ and $y\in \partial U$, the Poisson kernel $P_U(x,y)$ is given by  $P_U(x,y)=p_U(x,y)\pi(y)$ with
\begin{eqnarray*}p_U(x,y) &= & \phi_0(x) \sum_{t=0}^\infty \beta_0^t\frac{\partial_y \phi_0(y)k^t_{\phi_0}(x,y)}{\partial \vec{\nu}_U} \\
&=&  \phi_0(x) \sum_{t=0}^\infty \beta_0^t \sum_{z\in \nu(y)} \phi_0(z)k^t_{\phi_0}(x,z)\frac{\mu_{zy}}{\pi(y)}.\end{eqnarray*}
Similarly, $P_U(t,x,y)=p_U(t,x,y)\pi(y)$ with
\begin{eqnarray*}p_U(t,x,y)&= & \phi_0(x) \sum_{\ell=0}^{t-1} \beta_0^\ell\frac{\partial_y \phi_0(y)k^\ell_{\phi_0}(x,y)}{\partial \vec{\nu}_U} \\
&=&  \phi_0(x) \sum_{\ell=0}^{t-1} \beta_0^\ell \sum_{z\in \nu(y)} \phi_0(z)k^\ell_{\phi_0}(x,z)\frac{\mu_{zy}}{\pi(y)}.\end{eqnarray*}
\end{theo}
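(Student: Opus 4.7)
The plan is to derive Theorem~\ref{th-Pab2} as a direct consequence of Theorem~\ref{th-Pab1} combined with the conjugation identity
$$k^t_U(x,y) = \beta_0^t\,\phi_0(x)\phi_0(y)\,k^t_{\phi_0}(x,y),$$
which is recorded just before the theorem statement. The key observation is that because $\phi_0$ vanishes on $\mathfrak{X}\setminus U$ (by the convention stated in Section~\ref{sec-ST}), and because $K_U(x,y)$ is only supported for $x,y \in U$, both $k^t_U(x,\cdot)$ and $\phi_0(\cdot)\,k^t_{\phi_0}(x,\cdot)$ are functions on $U \cup \partial U$ that vanish identically on $\partial U$. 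So the conjugation identity trivially extends to an identity of functions on $U \cup \partial U$ whose values at $y \in \partial U$ are both $0$.

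The first step is to expand the interior normal derivative in Theorem~\ref{th-Pab1}. Since $k^t_U(x,y)=0$ for $y \in \partial U$, the definition~\eqref{eq-norm-der} collapses to the one-sided expression
$$\frac{\partial_y k^t_U(x,y)}{\partial \vec{\nu}_U} = \sum_{z \in \nu_U(y)} k^t_U(x,z)\,\frac{\mu_{zy}}{\pi(y)}.$$
I then substitute the conjugation identity $k^t_U(x,z) = \beta_0^t\,\phi_0(x)\phi_0(z)\,k^t_{\phi_0}(x,z)$, pull the factor $\beta_0^t\,\phi_0(x)$ out of the sum over $z$, and read off the second displayed expression for $p_U(x,y)$ in the statement. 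Summing on $t$ yields the first formula; the partial sum on $\ell$ up to $t-1$ yields the corresponding formula for $p_U(t,x,y)$.

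The equality between the ``normal-derivative'' form and the ``sum over $z \in \nu(y)$'' form then follows by noting that $\phi_0(y)k^t_{\phi_0}(x,y)$, extended by $0$ outside $U$, is again a function on $U \cup \partial U$ vanishing on $\partial U$, so~\eqref{eq-norm-der} applied to it produces exactly $\sum_{z \in \nu_U(y)} \phi_0(z)k^t_{\phi_0}(x,z)\mu_{zy}/\pi(y)$. This justifies writing the result in the cleaner differential-operator notation that will be convenient later.

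There is no real obstacle here: the proof is a bookkeeping exercise that repackages Theorem~\ref{th-Pab1} in terms of the Doob-transformed kernel. The only subtlety worth stating explicitly is the boundary behavior of $\phi_0$, which makes the normal derivative collapse to a sum over interior neighbors and ensures that the $h$-transform formula extends harmlessly to the boundary. Aperiodicity plays no role in this algebraic identity and need not be assumed.
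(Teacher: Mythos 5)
Your proposal is correct and matches the paper's own (implicit) argument: the paper introduces the conjugation identity $k^t_U(x,y)=\beta_0^t\phi_0(x)\phi_0(y)k^t_{\phi_0}(x,y)$ just before the statement and then says ``This gives the following version of Theorem~\ref{th-Pab1},'' which is exactly the substitution you carry out. Your additional care with the boundary convention for $\phi_0$ and the collapse of the normal derivative to a one-sided sum is a faithful spelling-out of the same bookkeeping step.
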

\begin{exa}[Example {\ref{exa-box1}}, continued] \label{exa-box2}
Let us spell out what Theorem \ref{th-Pab2} says in the case of the Euclidean box $U=\{-N,\dots,N\}^2\subset \mathbb Z^2$
depicted in Figure \ref{N2}.  The setting is as in Example \ref{exa-box1}. First, note that  the Perron-Frobenius eigenfunction
$\phi_0$ is given by
$$\phi_0(x)=\phi_0((x_1,x_2))=\frac{1}{(N+1)}\cos \frac {\pi x_1}{2(N+1)}\cos \frac{\pi x_2}{2(N+1)},$$
with associated eigenvalue
$$\beta_0=\frac{1}{2}\left(1+\cos \frac{\pi}{2(N+1)}\right)\sim 1-\frac{\pi^2}{16(N+1)^2},$$
where the asymptotic is  when $N$ tends to infinity. Using~\eqref{eq-doob-transform}, the associated Doob transform Markov chain has kernel
 $$K_{\phi_0}(x,y)=\left\{\begin{array}{cl} 0  & \mbox{ for } x,y\in U, |x_1-y_1|+|x_2-y_2|>1,\\
\frac{1}{2\beta_0} & \mbox{ for }  x,y\in U, x=y,\\
\frac{1}{8\beta_0} \frac{\phi_0(y)}{\phi_0(x) }& \mbox{ for } x,y\in U, |x_1-y_1|+|x_2-y_2|=1.
\end{array}\right.$$
By construction this kernel (which  resembles closely a Metropolis-Hastings kernel) is reversible with respect to the probability measure $\pi_{\phi_0}$.  It is also irreducible and aperiodic and thus, for any $x,y\in U$, $$K_{\phi_0}^t(x,y)\rightarrow \pi_{\phi_0}(y)$$ as $t$ tends to infinity.  Equivalently, $k^t_{\phi_0}(x,y) \rightarrow 1$ as $t$ tends to infinity.   Recall that each boundary point $y\in \partial U$ has exactly one neighbor $y^*$ in $U$.  Using this information, the Poisson kernel formula
provided by Theorem \ref{th-Pab2} reads
$$p_U(x,y)
=  \frac{1}{8}\phi_0(x) \phi_0(y^*) \sum_{t=0}^\infty \beta_0^t k^t_{\phi_0}(x,y^*),
 \;\;x\in U,\; y\in \partial U.$$
This makes it clear that a two-sided bound for $p_U(x,y)$, valid for all $x\in U$ and $y\in \partial U$, would follow from a two-sided bound on $k^t_{\phi_0}(x,y^*)$ that holds uniformly
in $t,x,$ and $y^*$.  Such a bound is provided in the next two sections.
  \end{exa}
     
 \section{Harnack Markov chains and Harnack weighted graphs}\label{sec-Harnack}
 
 In this section, we discuss the highly non-trivial notion of a {\em Harnack Markov chain} or, equivalently,  of a {\em Harnack weighted graph}.   
 Consider a weighted graph  $(\mathfrak X,\mathfrak E, \pi,\mu)$ satisfying (\ref{cond-pimu}) and its associated Markov kernel $K$ defined at (\ref{def-K}).
For $x,y\in \mathfrak X$, let $d(x,y)$ be the minimal number of edges in $\mathfrak E$ one must cross to join $x$ to $y$ by a discrete path. Let
$$B(x,r)=\{y\in \mathfrak X: d(x,y)\le r).$$
be the ball of radius $r$ around $x \in \mathfrak{X}$. Note that $B(x,r)\cup \partial B(x,r)=B(x, r+1)$. 
 
 Fix a parameter $\theta\ge 2$ (it turns out that the assumption that $\theta\ge 2$ is not restrictive for what follows). The key point in the following definition is 
 that the constant $C_H$  is required to be independent of scale and location (i.e., $R\ge 1$, $t_0 \in \mathbb N$ and $x_0\in \mathfrak X$) and also independent of the non-negative function $u$, the solution of (\ref{def-he1}). 
 \begin{defin} \label{def-HC}
 We say that $(K,\pi)$ is a $\theta$-Harnack Markov chain (equivalently, that $(\mathfrak X,\mathfrak E, \pi,\mu)$ is a $\theta$-Harnack weighted graph),
 if there exists a constant $C_H$ such that  for any $R>0$, $t_0 \in \mathbb N$, and $x_0 \in \mathfrak X$, and non-negative function $u:\mathbb{N} \times \mathfrak{X} \rightarrow \mathbb{R}_{\geq 0}$ defined on a time-space cylinder
 $$Q(R,t_0,x_0)=\left[t_0, t_0+4\lceil R^\theta\rceil+1\right]\times B(x_0, 2R+1) $$
such that
\begin{equation} u(t+1,x)= \sum_yu(t,y)K(x,y)  \label{def-he1}
\end{equation}
in $$Q'(R,t_0,x_0)= \left[t_0, t_0+4\lceil R^\theta\rceil \right]\times B(x_0, 2R),$$ 
it holds that, for all $(t,x)\in Q_-(R,t_0,x_0)= [t_0+\lceil R^\theta\rceil, t_0+2\lceil R^\theta\rceil]\times B(x_0, R),$
$$u(t,x)\le C_H \min_{(k,y)\in Q_+(R,t_0,x_0)}\{u(k,y)+u(k+1,y)\}$$
where 
$$Q_+(R,t_0,x_0)=  \left[t_0+3\lceil R^\theta\rceil, t_0+4\lceil R^\theta\rceil\right]\times B(x_0, R) .$$
 \end{defin}
 Equation (\ref{def-he1}) can also be written using the graph Laplacian $\Delta=I-K$ (i.e., $\Delta u(t,x)= u(t,x)-\sum_y K(x,y)u(t,y)$)
  and the time difference operator $\partial_t u(t,x)=u(t+1,x)-u(t,x)$
 in the form
 \begin{equation} \label{def-he2}
\partial_t u +\Delta u=0.
 \end{equation} 
This is the discrete-time heat equation on $(\mathfrak X,\mathfrak E,\mu,\pi)$ and the property required in Definition \ref{def-HC} is the validity, at all scales and locations, of the discrete time $\theta$-parabolic Harnack inequality.  

\begin{exa} The square lattice $\mathbb Z^n$, equipped with the vertex weight $\pi\equiv 1$ and the edge weight   $\mu\equiv 1/2^n$, on $\mathfrak E$ is a
$2$-Harnack weighted graph. See~\cite{Barlow,Delm-PH,GrigoryanAMS}.
\end{exa} 
\begin{exa} The Sierpinski gasket graph is a $\theta$-Harnack weighted graph with $\theta=\log5/\log2$. See, e.g., \cite[Section 2.9 and Corollary 6.11]{Barlow} and \cite{BB}. 
 \end{exa}
 These two examples illustrate the fact that $\theta=2$  corresponds to the more classical situation of $\mathbb Z^n$ when the random walk has a diffusive
 behavior in the sense that it travels approximately a distance  $\sqrt{t}$ in time $t$ whereas the case $\theta>2$ corresponds to sub-diffusive behaviors when the random walk travel approximately a distance  $t^{1/\theta} < \sqrt{t}$ in time $t$. This second type of behavior is typical of fractal type spaces.  The following theorem make these statement more precise.
  
  \begin{theo}[See {\cite[Theorem 3.1]{GT2} and also \cite[Theorem 1.2]{BB}}]   \label{theo-PHItheta}
  Assume that the weighted graph $(\mathfrak X,\mathfrak E, \pi,\mu)$ satisfies the ellipticity condition 
  \begin{equation}
  \label{elliptic} 
  \forall \{x,y\}\in \mathfrak E,\;\;\pi(x)\le P_e \mu_{xy}\end{equation}
   for some fixed constant $P_e$.  Under this assumption,  $(\mathfrak X,\mathfrak E, \pi,\mu)$ is a $\theta$-Harnack 
 graph if and only if the iterated transition kernel $k^t(x,y)=K^t(x,y)/\pi(y)$ of the chain $(K,\pi)$ satisfies
 \begin{equation} \label{ktup}
  k^t(x,y)\le \frac{C_1}{\pi(B(x,t^{1/\theta}))} \exp\left(- c_1 \left(\frac{d(x,y)^\theta}{t}\right)^{1/(\theta-1)}\right) 
   \end{equation}
  when $ d(x,y)\le n$, and
    \begin{equation} \label{ktlow}  
    k^{t+1}(x,y)+k^t(x,y)\ge  \frac{c_2}{\pi(B(x,t^{1/\theta}))} \exp\left(- C_2 \left(\frac{d(x,y)^\theta}{t}\right)^{1/(\theta-1)}\right), \end{equation}
    where $c_1,c_2,C_1,C_2>0$.
     \end{theo}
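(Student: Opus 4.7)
The plan is to establish the equivalence in both directions via the Grigoryan--Telcs framework, exploiting the fact that both the $\theta$-parabolic Harnack inequality (PHI) of Definition \ref{def-HC} and the two-sided bounds (\ref{ktup})--(\ref{ktlow}) are intermediated by volume doubling together with a Poincar\'e-type inequality at scale $r^\theta$. For the direction $(\Rightarrow)$, I would first deduce volume doubling from PHI by applying the Harnack inequality to caloric functions built from exit-time probabilities; the ellipticity assumption (\ref{elliptic}) ensures that $\pi(x)$ is comparable to the local edge weights, so neighboring balls can be compared uniformly. Iterating PHI on the caloric function $s\mapsto k^s(\cdot,y)$ (which satisfies (\ref{def-he1}) on $U=\mathfrak X$) and combining with a Nash-type argument promotes PHI into a parabolic mean-value inequality, which yields the on-diagonal bound $k^t(x,x)\le C/\pi(B(x,t^{1/\theta}))$. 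The off-diagonal upper bound (\ref{ktup}) is then obtained by a Davies-type perturbation (or equivalently a Carne--Varopoulos chaining), adapted to the $\theta$-scaling so as to produce the correct sub-Gaussian exponent $(d(x,y)^\theta/t)^{1/(\theta-1)}$.

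For the lower bound (\ref{ktlow}), the workhorse is a chaining argument. Along a geodesic $x=z_0,z_1,\dots,z_k=y$ with $k=d(x,y)$, split $[0,t]$ into $k$ pieces of length $\sim t/k$ and apply PHI on each cylinder of spatial scale $r_i\sim(t/k)^{1/\theta}$ to the caloric function $(s,z)\mapsto k^s(z,z_{i+1})$. This propagates the near-diagonal lower bound $k^{t/k}(z_i,z_{i+1})\ge c/\pi(B(z_i,(t/k)^{1/\theta}))$ across the chain; multiplying these inequalities via the semigroup property and optimizing over $k$ reproduces exactly the sub-Gaussian factor. The two-step form $k^{t+1}+k^t$ on the left of (\ref{ktlow}) is present precisely to absorb the parity shift inherent to the discrete-time Harnack conclusion in Definition \ref{def-HC}, which bounds $u(t,x)$ by $u(k,y)+u(k+1,y)$. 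Volume doubling, already derived from PHI, is used here to pass between balls centered at successive $z_i$ without losing constants.

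For the converse direction $(\Leftarrow)$, the strategy of \cite{GT2} and \cite{BB} is to show that (\ref{ktup})--(\ref{ktlow}) encode volume doubling, the mean exit-time estimate $\mathbf E_x[\tau_{B(x,r)}]\asymp r^\theta$, and a near-diagonal lower bound, and that this triple implies PHI through elliptic and parabolic oscillation/mean-value inequalities; since the theorem is quoted from these papers, my plan would be to invoke their reductions rather than reconstruct this half in detail. The main obstacle, and the genuinely delicate step, is the off-diagonal upper bound (\ref{ktup}) in the anomalous regime $\theta>2$: the Davies perturbation method must be modified because the natural ``distance'' that exponentiates well is not $d$ itself but a suitably tempered version compatible with the exponent $1/(\theta-1)$, and the discrete-time parabolic setup requires careful bookkeeping so that the constants do not depend on $t_0$ or $x_0$, matching the scale- and location-invariance built into Definition \ref{def-HC}.
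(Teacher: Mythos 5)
The paper gives no proof of this theorem: it is imported verbatim as an established result, citing \cite[Theorem 3.1]{GT2} and \cite[Theorem 1.2]{BB}, so there is no in-text argument to compare your proposal against. That said, your sketch is a faithful high-level account of the machinery in those references: you correctly identify that PHI yields volume doubling and an on-diagonal upper bound via a parabolic mean-value inequality, that the off-diagonal upper bound requires a $\theta$-adapted Davies or chaining argument (with the genuine difficulty in the anomalous regime $\theta>2$ being the replacement of $d$ by a tempered metric compatible with the exponent $1/(\theta-1)$), that the lower bound follows by propagating near-diagonal lower bounds along a geodesic chain with scales $r_i\sim(t/k)^{1/\theta}$ and optimizing over $k$, and that the two-term form $k^{t+1}+k^t$ on the left of~(\ref{ktlow}) is precisely what absorbs the parity shift built into the discrete-time Harnack conclusion $u(t,x)\le C_H\min\{u(k,y)+u(k+1,y)\}$. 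Your treatment of the converse (encode doubling plus mean exit time $\mathbf E_x[\tau_{B(x,r)}]\asymp r^\theta$ plus near-diagonal lower bound, then run elliptic/parabolic oscillation estimates) likewise matches the strategy of \cite{GT2} and \cite{BB}. Since the theorem is cited rather than proved here, relying on those references for the technical core, as you propose for the converse, is the appropriate and expected move; a self-contained reproof is neither attempted by the authors nor required.
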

Theorem \ref{theo-PHItheta} established the equivalence of two properties, each of which seems (and is) very hard to verify.  The following theorem offers a third equivalent condition which, at least in the case $\theta=2$, can sometimes be checked using elementary arguments.
 
   \begin{theo}[See {\cite[Theorem 1.5]{BB}}]  \label{th-D-BB}
    Assume that the weighted graph $(\mathfrak X,\mathfrak E, \pi,\mu)$ satisfies the ellipticity condition {\em (\ref{elliptic})} for some fixed constant $P_e$.  Under this assumption,  $(\mathfrak X,\mathfrak E, \pi,\mu)$ is a $\theta$-Harnack 
 graph if and only if  the following three conditions are satisfied:
 \begin{enumerate} 
 \item There is a constant $C_D$ such that, for all $x\in \mathfrak X$ and all $r>0$, 
 $$\pi(B(x,2r)) \le C_D \pi(B(x,r)).$$
 In words, the volume doubling condition is satisfied. 
   \item There is a constant $C_P$ such that, for all $x\in \mathfrak X$ and all $r>0$,  the Poincar\'e inequality with constant $C_P r^\theta$ holds on the ball $B(x,r)$, i.e.,
   $$\forall\, f,\;\;\sum_{z\in B}|f(z)-f_B|^2\pi(z)\le C_Pr^\theta \sum_{\xi,\zeta\in B, (\xi\zeta)\in\mathfrak E}|f(\xi)-f(\zeta|^2\mu_{\xi\zeta},$$
 where $f_B=\pi(B)^{-1}\sum_Bf\pi$.
   \item  The cut-off function existence property $\mbox{\em CS}(\theta)$ is satisfied. (See Definition~\ref{defn-cs} below.)
 \end{enumerate}    
When $\theta=2$, the cut-off function existence property $\mbox{CS}(\theta)$ is always satisfied. \end{theo}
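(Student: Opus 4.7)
The plan is to prove the equivalence by deploying the machinery of the discrete parabolic Harnack theory of Delmotte and Barlow--Bass. In the forward direction, assume $(\mathfrak X,\mathfrak E,\pi,\mu)$ is $\theta$-Harnack and derive the three conditions in turn. Volume doubling can be extracted by applying PHI to the heat kernel $u(t,x)=k^t(x_0,x)$ on $Q(r,0,x_0)$: comparing its values on $Q_-$ and $Q_+$ with the mass normalization $\sum_y u(t,y)\pi(y)=1$ forces $\pi(B(x_0,2r))\le C_D\pi(B(x_0,r))$ for $r$ large, while the ellipticity condition (\ref{elliptic}) handles small scales. The $L^2$ Poincar\'e inequality on balls at scale $r^\theta$ comes from PHI by a chain-covering argument in the spirit of Jerison, using the two-sided sub-Gaussian heat kernel bounds (\ref{ktup}) and (\ref{ktlow}) that PHI provides via Theorem~\ref{theo-PHItheta}. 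For CS($\theta$), I would produce the required cut-off functions from equilibrium potentials or mean exit times on annular regions $B(x,R+r)\setminus B(x,R)$; the sub-Gaussian heat kernel estimates translate into a Dirichlet energy bound of the correct $r^{-\theta}$ scale, and this is where the exponent $\theta$ appears geometrically.

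The reverse direction is the main obstacle. Assuming volume doubling, the Poincar\'e inequality at scale $r^\theta$, and CS($\theta$), the goal is to derive PHI. The canonical strategy is Moser iteration adapted to the discrete sub-diffusive setting. Volume doubling together with the Poincar\'e inequality yields a localized Nash inequality, from which a Nash/Davies argument extracts the on-diagonal upper bound $k^t(x,x)\le C/\pi(B(x,t^{1/\theta}))$. The property CS($\theta$) then drives the Davies perturbation with weight $e^{\psi}$ that upgrades the on-diagonal bound to the full off-diagonal sub-Gaussian upper bound~(\ref{ktup}); no substitute for CS($\theta$) is known in this step once $\theta>2$. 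A matching near-diagonal lower bound of the form~(\ref{ktlow}) comes from iterating the Poincar\'e inequality along a chain of balls, and finally PHI itself is assembled by an oscillation-reduction argument on the space-time cylinders $Q_\pm$ of Definition~\ref{def-HC}.

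For the last sentence, when $\theta=2$ one verifies CS($2$) by exhibiting the explicit cut-off $\varphi(y)=\min\{1,\max\{0,(R+r-d(x,y))/r\}\}$ on the annulus $B(x,R+r)\setminus B(x,R)$. Each edge contributes at most $1/r^2$ to $\sum_{\{\xi,\zeta\}\in\mathfrak E}|\varphi(\xi)-\varphi(\zeta)|^2\mu_{\xi\zeta}$, and volume doubling bounds the total against the required $r^{-2}$ multiple of $\pi(B)$. In the sub-diffusive regime $\theta>2$ no linear cut-off can satisfy the required $r^{-\theta}$ decay, which is precisely why CS($\theta$) becomes a genuinely nontrivial hypothesis outside the classical diffusive case.
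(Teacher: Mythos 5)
The paper does not prove Theorem~\ref{th-D-BB}: it is stated as a citation of \cite[Theorem 1.5]{BB} (Barlow--Bass), and the only piece of supporting argument the paper supplies is Remark~\ref{rem-CS}, which verifies the final sentence (that $\mathrm{CS}(2)$ always holds) via the explicit linear cut-off $\phi(z)=\min\{1,2(1-d(x,z)/r)_+\}$. So there is no ``paper's own proof'' of the full equivalence to compare against; the statement is imported, not demonstrated.

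Your sketch is a reasonable high-level reconstruction of the Barlow--Bass / Grigor'yan--Telcs program behind the cited theorem, and your verification of $\mathrm{CS}(2)$ by the linear ramp is precisely the content of Remark~\ref{rem-CS} (your annulus parametrization $B(x,R+r)\setminus B(x,R)$ is equivalent to the paper's $B(x,r/2)$-to-$B(x,r)$ version after relabeling the radii). However, the sketch only names the machinery --- Nash inequality, Davies perturbation with weight $e^\psi$, oscillation lemmas, chains of Harnack balls --- without carrying out any estimate, and the genuinely hard content of the cited theorem (how $\mathrm{CS}(\theta)$ feeds into the Davies step to produce the off-diagonal sub-Gaussian bound with exponent $\theta/(\theta-1)$, and the reassembly of the full parabolic Harnack inequality from~(\ref{ktup})--(\ref{ktlow})) is not reproduced. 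Since the paper treats the result as external, that is acceptable, but you should say explicitly that you are summarizing a several-paper literature rather than providing a proof. One inaccuracy worth correcting: deriving the Poincar\'e inequality from the parabolic Harnack inequality is not ``a chain-covering argument in the spirit of Jerison''; Jerison's chain argument upgrades a weak Poincar\'e inequality to a strong one, whereas here one deduces Poincar\'e from PHI either by applying Harnack directly to caloric functions or via the two-sided heat kernel bounds and a spectral-gap argument.
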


\begin{defin}[{\cite[Definition 1.4]{BB}}]\label{defn-cs} Fix $\theta\in [2,\infty)$. The weighted graph 
$(\mathfrak X,\mathfrak E, \pi,\mu)$ satisfies
the cut-off function existence property $\mbox{\em CS}(\theta)$ if there are constants $C_1,C_2,C_3$ and $\epsilon>0$ such that, for any $x\in \mathfrak X$ and $r>0$, there exists a function $\phi=\phi_{x,r}$  satisfying the following four properties:
\begin{itemize}
\item[(a)] $\phi\ge 1$ on $B(x,r/2)$
\item[(b)] $\phi\equiv 0$ on $\mathfrak X\setminus B(x,r)$
\item[(c)] For all $y,z\in\mathfrak X$, $|\phi(z)-\phi(y)|\le C_1(d(z,y)/r)^\epsilon$
\item[(d)] For any $s\in (0,r]$ and any function $f$ on $B(x,2r)$,
\begin{eqnarray*}
\lefteqn{\sum_{z\in B(x,s)}|f|^2\sum_{y: \{z,y\}\in \mathfrak E}|\phi(z)-\phi(y)|^2\mu_{zy}}&&\\
&\le &C_2(s/r)^{2\epsilon}\left\{\sum_{{z,y\in B(x,2s)}\atop {\{z,y\}\in \mathfrak E}}|f(z)-f(y)|^2\mu_{zy} +s^{-\theta}\sum_{ B(x,2s)}|f|^2\pi \right\}.\end{eqnarray*}
\end{itemize}
\end{defin} 
 \begin{rem} \label{rem-CS}
 Given the rather unwieldy nature of this definition, some comments are in order. When $\theta=2$, the function $\phi(z)=\min\{1, 2(1-d(x,z)/r)_+\}$ provides the desired cut-off function. In that case, the inequality in (d) contains no particularly interesting information (it does say, for $s$ near $1/2$, that $\sum_y\mu_{xy}\le 4\pi(x)$,
 which is weaker than our basic assumption $\sum_y\mu_{xy}\le \pi(x)$).
 
 For $\theta>2$, the inequality in (d) becomes the carrier of some  (somewhat mysterious) useful information. One of its simplest consequences is a lower estimate for the Perron-Frobenius eigenvalue $\beta_0=\beta_{U,0}$ when $U=B(x,r)$.  Namely, the cut-off function $\phi$ for the ball $B(x,r)$ must satisfy
 $$\pi (|\phi|^2)\ge  \pi(B(x,r/2))$$
by (a) in Definition~\ref{defn-cs} and
 $$\sum_{{z,y\in B(x,r)}\atop {\{z,y\}\in \mathfrak E}}|\phi(z)-\phi(y)|^2\mu_{zy}
\le C_2r^{-\theta} \pi(B(z,2r))$$
by (d) in Defintion~\ref{defn-cs}, taking $f \equiv 1$ and $s=r$. Together with the doubling property, this implies that the Perron-Frobenius eigenvalue of the ball $B(x,r)$ satisfies
\begin{equation}\label{beta0theta}
1-\beta_{B(x,r),0} \le \frac{C_2C_D^2}{r^\theta}.\end{equation}
 \end{rem}
 
 The aim of the next theorem is to illustrate in the simplest possible way the use of the notion of a Harnack Markov chain  in obtaining two-sided estimates 
 on $p_U(x,y)$.  We introduce the following definition and notation.
 
 \begin{defin} For any finite domain $U$ in $\mathfrak X$, let
 $(U,\mathfrak E_U)$ be the associated subgraph with edge set $\mathfrak E_U=\{(x,y)\in \mathfrak E: x,y\in U\}$.   
 Let $d_U$ be the associated graph distance and $B_U$ the corresponding graph balls. If $\beta_0,\phi_0$ are the Perron-Frobenius eigenvalue and eigenfunction for $U$ on $(\mathfrak X,\mathfrak E,\mu,\pi)$, the Markov chain $(K_{\phi_0},\pi_{\phi_0})$ is the chain associated with the weighted graph
 $$( U,\mathfrak E_U, \mu^{\phi_0}, \pi_{\phi_0}) \mbox{ where } \mu^{\phi_0}_{xy}=\beta_0^{-1}\phi_0(x)\phi_0(y)\mu_{xy}.$$
 \end{defin}
 
\begin{rem}
We use $A(t,x,y) \approx B(t,x,y)$ when there exists $c,C>0$ such that $$c \leq \frac{A(t,x,y)}{B(t,x,y)} \leq C,$$ where $c,C$ depend only on the key parameters (e.g., dimension, and the constants from volume doubling, the Harnack condition, and the Poincar\'e inequality) and not on the specific time $t$, positions $x,y$, or any size parameters (e.g., $r$ where $x, y \in B(z,r)$). When there is a subscript on $\approx$ (such as $\approx_{\epsilon}$ or $\approx_n$) the constants $c,C$ additionally depend on the parameter in the subscript.
\end{rem} 
 
 \begin{theo} \label{theo-PUo}
 Let $U$ be a finite domain in $(\mathfrak X,\mathfrak E,\pi,\mu)$ with Perron-Frobenius eigenvalue and eigenfunction $\beta_0,\phi_0$.
 Let $T_U$ be such that $\beta_0=1-1/T_U$. Assume that 
\begin{enumerate}
\item  There exists $C\geq 0$, $R \in \mathbb Z$ and a point $o\in U$ such that   $$B(o,R/2)\subset U \mbox{ and  }  \;U\subset B_U(o,CR);$$
\item The weighted graph  $(\mathfrak X,\mathfrak E,\mu,\pi)$ is a $\theta$-Harnack weighted graph which satisfies 
the ellipticity condition $\pi(x)\le P_e \mu_{xy}$ for some fixed constant $P_e$. 
 \item The Markov chain $(K_{\phi_0},\pi_{\phi_0})$ is a $\theta$-Harnack chain on  $(U,\mathfrak E_U)$.\end{enumerate}
Under these assumptions, for any point $y$ on the boundary $\partial U,$
$$P_U(o,y) \approx  T_U \phi_0(o)\sum_{z\in \nu(y)} \phi_0(z) \mu_{zy} \approx \frac{T_U}{\sqrt{\pi(U)}} \sum_{z\in \nu(y)} \phi_0(z) \mu_{zy}.$$
 \end{theo}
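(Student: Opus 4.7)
The starting point is Theorem~\ref{th-Pab2} at $x=o$, which writes
$$P_U(o,y) = \phi_0(o) \sum_{z \in \nu_U(y)} \phi_0(z)\mu_{zy}\, H(o,z), \qquad H(o,z) := \sum_{t=0}^\infty \beta_0^t k^t_{\phi_0}(o,z).$$
The theorem decomposes into two independent tasks: (i) the estimate $H(o,z) \approx T_U$ uniformly over $y \in \partial U$ and $z \in \nu_U(y)$, which yields the first $\approx$; and (ii) the centering estimate $\phi_0(o) \approx 1/\sqrt{\pi(U)}$, which converts it into the second.

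For (i), the plan is to exploit the hypothesis that $(K_{\phi_0},\pi_{\phi_0})$ is a $\theta$-Harnack chain on $(U,\mathfrak E_U,\mu^{\phi_0},\pi_{\phi_0})$ with $\pi_{\phi_0}(U)=1$. Theorem~\ref{theo-PHItheta} then supplies two-sided bounds \eqref{ktup}--\eqref{ktlow} for $k^t_{\phi_0}$ in terms of $\pi_{\phi_0}$-volumes of $d_U$-balls, and Theorem~\ref{th-D-BB} delivers volume doubling for $\pi_{\phi_0}$ on $(U,\mathfrak E_U)$. Doubling combined with $U \subset B_U(o,CR)$ and $\pi_{\phi_0}(U)=1$ yields $\pi_{\phi_0}(B_U(o,r)) \gtrsim 1$ whenever $r \gtrsim R$. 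The Poincar\'e inequality from Theorem~\ref{th-D-BB} applied on $U$ itself, together with \eqref{beta0theta}, also pins down $T_U = 1/(1-\beta_0) \approx R^\theta$. Hence for $t \geq c T_U$ one has $t^{1/\theta} \gtrsim R \geq d_U(o,z)$ and the Gaussian factor is bounded, so \eqref{ktup}--\eqref{ktlow} give $k^t_{\phi_0}(o,z) \approx 1$; summing $\beta_0^t$ over this tail produces both the lower bound $H(o,z) \gtrsim T_U$ and the contribution $\sum_{t \geq cT_U}\beta_0^t k^t_{\phi_0}(o,z) \lesssim T_U$.

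It remains to bound $\sum_{t < cT_U}\beta_0^t k^t_{\phi_0}(o,z)$ from above. I would split the sum at $t=d_U(o,z)^\theta$: for $t < d_U(o,z)^\theta$ the Gaussian factor in \eqref{ktup} is exponentially small, so the contribution is $O(1)$; for $d_U(o,z)^\theta \leq t < cT_U$, the ball $B_U(o,t^{1/\theta})$ has radius $\geq d_U(o,z) \geq R/2-1$, so doubling forces $\pi_{\phi_0}(B_U(o,t^{1/\theta})) \gtrsim 1$ and \eqref{ktup} gives $k^t_{\phi_0}(o,z) \lesssim 1$, making the sum over a range of length $\lesssim T_U$ of size $O(T_U)$. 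The main obstacle here is to match the scale at which the Gaussian factor ceases to help with the scale at which doubling starts to provide a usable lower bound on the $\pi_{\phi_0}$-volume; this matching is precisely what the Harnack hypothesis on $K_{\phi_0}$ (item~(3)), and not merely Harnack on the ambient graph, is designed to secure.

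For (ii), I would apply the parabolic Harnack inequality of Definition~\ref{def-HC} for the ambient graph to the nonnegative space-time solution $u(t,x)=\beta_0^t\phi_0(x)$ of \eqref{def-he1}, valid on any cylinder contained in $\mathbb N\times B(o,R/2)$ since $K=K_U$ strictly inside $U$. Applied at a scale $R'\asymp R$ in both time directions, Harnack gives $\phi_0(y)\approx\phi_0(o)$ on $B(o,R')$, the compensating factors $\beta_0^{k-t}$ remaining bounded above and below because $|k-t|\lesssim R^\theta\approx T_U$. The upper bound $\phi_0(o)\lesssim 1/\sqrt{\pi(U)}$ then follows from $1=\pi(\phi_0^2)\geq c\,\phi_0(o)^2\pi(B(o,R'))\gtrsim \phi_0(o)^2\pi(U)$ via doubling of $\pi$, which pushes $\pi(B(o,R'))$ up to $\pi(B(o,CR))\geq\pi(U)$. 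For the reverse, doubling of $\pi_{\phi_0}$ on $(U,\mathfrak E_U)$ yields $\pi_{\phi_0}(B(o,R'))\gtrsim\pi_{\phi_0}(U)=1$; combining with $\phi_0\approx\phi_0(o)$ on $B(o,R')$ produces $\phi_0(o)^2\pi(B(o,R'))\gtrsim 1$, hence $\phi_0(o)\gtrsim 1/\sqrt{\pi(U)}$ after one further use of doubling of $\pi$.
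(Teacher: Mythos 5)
Your decomposition is the same as the paper's: estimate the sum $H(o,z)=\sum_t\beta_0^t k^t_{\phi_0}(o,z)\approx T_U$ and the centering $\phi_0(o)\approx\pi(U)^{-1/2}$, then combine via Theorem~\ref{th-Pab2}. The centering argument (ii), applying the ambient parabolic Harnack inequality to $u(t,x)=\beta_0^t\phi_0(x)$ and then using doubling of both $\pi$ and $\pi_{\phi_0}$, is exactly the paper's argument. The overall route is therefore the same, and the final conclusion is correct, but one intermediate claim in your treatment of (i) is wrong.

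You assert that the range $d_U(o,z)\le t<d_U(o,z)^\theta$ contributes $O(1)$ because ``the Gaussian factor is exponentially small.'' That is false near the upper end of the range: for $t$ comparable to $d_U(o,z)^\theta$, say $t\in[\tfrac12 d_U(o,z)^\theta,\,d_U(o,z)^\theta]$, the factor $\exp\!\left(-c_1(d_U(o,z)^\theta/t)^{1/(\theta-1)}\right)$ is bounded below by a positive constant, while $\pi_{\phi_0}(B_U(o,t^{1/\theta}))\gtrsim 1$ since $t^{1/\theta}$ is already of order $R$. So each of these roughly $d_U(o,z)^\theta\approx R^\theta$ terms is of size $\Theta(1)$, and that subrange contributes $\Theta(R^\theta)$, not $O(1)$. (This is precisely why the paper splits at $R^\theta$ and records the contribution from $[d_U(o,z),R^\theta]$ as $\approx R^\theta$.) Your final conclusion $H(o,z)\approx T_U$ nevertheless survives because $R^\theta\lesssim T_U$, which is what (\ref{beta0theta}) together with hypothesis~(1) gives, so the extra $R^\theta$ is absorbed.

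One smaller remark: you claim $T_U\approx R^\theta$ and attribute both directions to ``the Poincar\'e inequality applied on $U$ itself.'' Only $T_U\gtrsim R^\theta$ is needed by either your argument or the paper's, and it is the one that follows cleanly from the cut-off function property (Remark~\ref{rem-CS}); the reverse bound $T_U\lesssim R^\theta$ is not a direct consequence of the Poincar\'e inequality (which is a Neumann estimate, whereas $\beta_0$ is a Dirichlet eigenvalue), though it does follow from exit-time estimates using $U\subset B_U(o,CR)$. Since you never actually use the upper bound on $T_U$, you can simply drop that claim.
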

 
 \begin{proof}  First, we start with remarks regarding $\phi_0(o)$. By assumption, the measure $\pi_{\pi_0}$ is doubling and $\pi(\phi_0^2)=1$. It follows that, for any fixed $\epsilon\in (0,1/2)$,  
 $$\sum_{B(o, \epsilon R)}\phi_0^2\pi \approx _\epsilon 1.$$ Because $(\mathfrak X,\mathfrak E,\mu,\pi)$ is a $\theta$-Harnack weighted graph,  $\phi_0(o)\approx _\epsilon \phi_0(z)$ for any $z\in B(o,\epsilon R)$. Using this and the doubling property of $\pi_{\phi_0}$,
 \begin{equation}
     \label{eq:phi0approx}
 \phi_0(o)^2  \approx _\epsilon \pi(B(o,\epsilon R)) ^{-1} \sum_{B(o, \epsilon R)}\phi_0^2\pi \approx_\epsilon \pi(U)^{-1}.
 \end{equation}
 Using~\eqref{eq:phi0approx} and the doubling property of $\pi$,
 $$\pi(U)^{1/2} \approx \phi_0(o) \pi(U) \approx_\epsilon \sum_{B(o, \epsilon R)}\phi_0 \pi \le \pi(\phi_0).$$  Also, $\pi(\phi_0)^2\le \pi(U)\pi(\phi_0^2)=\pi(U)$.
 It follows that
  $$  \pi (U) \phi_0(o)^2 \approx 1 \mbox{ and }\pi(\phi_0)\approx \pi(U)^{1/2}. $$
  We need to estimate (see Theorem \ref{th-Pab2})
  $$ P_U(o,y)=p_U(o,y)\pi(y)=\phi_0(o)\sum_{z\in \nu(y)}\mu_{zy}\phi_0(z) \left(\sum_{t=d_U(o,z)}^\infty \beta_0^t k^t_{\phi_0}(o,z) \right).$$
  
 Because of the second hypothesis, $\beta_0=1-1/T_U\ge 1- CR^{-\theta}$  and $R^\theta\le CT_U$  (see Remark \ref{rem-CS}). It follows that $(K_{\phi_0},\pi_{\phi_0})$ also satisfies the ellipticity condition and thus $(K_{\phi_0},\pi_{\phi_0})$ is a Harnack Markov chain satisfying the ellipticity condition and we can use the heat kernel estimates of 
 Theorem \ref{theo-PHItheta}.  In the bounds in (\ref{ktup})-(\ref{ktlow}), the distance $d$ is now $d_U$.  We observe that, for $z\in \nu(y)$, $R/2\le d_U(o,z)\le CR$
 and (using the doubling property of $\pi_{\phi_0}$ and the normalization $\pi(\phi_0^2)=1$),
 $$\sum_{t=d_U(o,z)}^{R^\theta} \frac{1}{\pi_{\phi_0}(B_U(o,t^{1/\theta})) } e^{- c (R^\theta/t)^{1/(\theta-1)} }\approx R^\theta.$$
It follows that
$$\sum_{t=d_U(o,z)}^\infty \beta_0^t k_{\phi_0}(o,z) \approx \sum_{t=d_U(o,z)}^{R^\theta} k^t_{\phi_0}(o,z)  +\sum_{t>R^\theta} \beta_0^t \approx T_U $$
because, for $t\ge R^\theta$, we have  $k_{\phi_0}^t(o,z) +k_{\phi_0}^{t+1}(o,z)\approx 1$, and, for $t\le R^\theta$, $\beta_0^t\approx 1$. 
Also
$\beta_0\in (0,1)$ and $\sum_{t>R^\theta}\beta_0^t\approx \frac{1}{1-\beta_0} \beta_0^{R^\theta}\approx T_U $. \end{proof}
 \begin{rem} By definition, the quantity  $P_U(o,\cdot)$ defines a probability measure on $\partial U$. This means that it must be the case that, under the hypotheses of Theorem \ref{theo-PUo},  
 \begin{equation} \label{eq-prob}
 T_U \phi(o)\sum_{y\in \partial U}\sum_{z\in\nu(y)}\phi_0(z)\mu_{zy}\approx 1.\end{equation}
 To verify  that this is indeed the case, observe  (extending $\phi_0$ by $0$ outside of $U$ and using the scalar product on $L^2(\mathfrak X,\pi)$)
 \begin{eqnarray*}
 \langle \mathbf 1_U,(I-K)\phi_0\rangle _\pi &= & \sum_{\{x,y\}\in \mathfrak E} (\mathbf 1_U(x)-\mathbf 1_U(y))(\phi_0(x)-\phi_0(y))\mu_{xy} \\
& =& \sum_{y\in \partial U}\sum_{z\in \nu(y)} \phi_0(z)\mu_{zy} .\end{eqnarray*}
 It follows that
 \begin{equation}
     \label{eqn-tu-est}
 \sum_{y\in \partial U}\sum_{z\in \nu(y)} \phi_0(z)\mu_{zy}=\sum_U (I-K_U)\phi_0 \pi =(1-\beta_0)\sum_U \phi_0 \pi=T_U^{-1}\pi(\phi_0).
 \end{equation}
 The estimate~\eqref{eq-prob} now  follows from~\eqref{eqn-tu-est} and $$\phi_0(o) \approx \pi(U)^{-1/2},\;\; \pi(\phi_0)\approx  \pi(U)^{1/2}.$$
 \end{rem}

 \begin{exa}[Example {\ref{exa-box1},\ref{exa-box2}}, continued]   \label{exa-box3}Theorem \ref{theo-PUo}  can be applied to the Euclidean box $U=\{-N,\dots,N\}^2\subset \mathbb Z^2$
depicted in Figure \ref{N2}.  The explicit Perron-Frobenius  eigenvalue and eigenfunction 
$\beta_0,\phi_0$ are  given above in Examples \ref{exa-box1} and~\ref{exa-box2}.  The square grid $\mathbb Z^2$ is one of the basic examples of a $2$-Harnack graph. It also turns out that $(K_{\phi_0},\pi_{\phi_0})$  is a $2$-Harnack Markov chain on $U$, which can proved using Theorem \ref{th-D-BB} with $\theta=2$. (This is a theorem due to Thierry Delmotte  \cite{Delm-PH} in the case $\theta=2$.)  See, e.g., \cite{DHSZ}.   This gives, for $n\in \{-N,\dots,N\}$,
\begin{eqnarray*}
P_U((0,0),(N+1,n))&\approx & \frac{(N+1)^2}{(N+1)^2} \cos\frac{\pi N}{2(N+1)}\cos \frac{\pi n}{2(N+1)}\\
& =& \sin \frac{\pi}{2(N+1)} \cos \frac{\pi n}{2(N+1)} \\
&\approx& \frac{1}{(N+1)} \cos \frac{\pi n}{2(N+1)}.\end{eqnarray*}
\end{exa}

\begin{exa} \label{exa-Boxd}
We spell out how the preceding example generalizes in dimension $n$ when $U=\{-N,\dots,N\}^n$. Here the graph $\mathbb Z^n$ is equipped with the edge weight $\mu_{xy}=1/4n$ if $\sum_1^n|x_i-y_i|=1$ and $0$ otherwise and the vertex weight $\pi\equiv 1$.   As in dimension $2$, one can compute exactly 
$$\beta_0= \frac{1}{2}\left(1+\cos\frac{\pi}{2(N+1)}\right)$$
and
$$\phi_0(x_1,\dots,x_n)= \frac{1}{(N+1)^{n/2}} \cos\frac{\pi x_1}{2(N+1)}\cdots \cos\frac{\pi x_n}{2(N+1)}.$$
Observe that a point $y=(y_1,\dots,y_n)$ is on the boundary $\partial U$ of $U$ if and only if there is a $j\subset \{1,\dots,n\}$  such that  $y_j=N+1$
and all other coordinates of $y$ are in $\{-N,\dots,N\}$. For such a $y$,
\begin{eqnarray*}
P_U(0,y)& \approx_n & \frac{(N+1)^2}{(N+1)^n} \sin \frac{\pi}{2(N+1)}\prod_{i\neq j} \cos \frac{\pi y_i}{2(N+1)} \\
&\approx_n& \frac{1}{(N+1)^{n-1}}\prod_{i\neq j} \cos \frac{\pi y_i}{2(N+1)}.
\end{eqnarray*}
\end{exa}

 \begin{figure}[h] 
\begin{center}
\begin{tikzpicture}[scale=.2]
\draw [help lines] (0,0) grid (20,20);

\foreach \x in {1,2,3,4,5,6,7,8,9,10,11,12,13,14,15,16,17,18,19,20}
{\draw  (\x,0) -- (0,\x);
\draw (\x,0) -- (\x,20-\x);
\draw (0,\x)--(20-\x,\x);
}
\draw (0,0)--(0,20);
\draw (0,0)-- (20,0);

\foreach \x in {0,1,2,3,4,5,6,7,8,9,10,11,12,13,14,15,16,17,18,19,20}
{\draw [fill, blue] (\x,0) circle [radius=.2];\draw  [fill, blue](0,\x)  circle [radius=.2];}
\foreach \x in {1,2,3,4,5,6,7,8,9,10,11,12,13,14,15,16,17,18,19}{\draw [fill, blue] (20-\x,\x) circle [radius=.2];}

\draw[fill,white]  (7,7) circle  [radius=2]; 
\node at (7,7)  {$\phi_0$};

\draw[fill,white]  (13,13) circle  [radius=2]; 

\node at (13,13)  {$-\phi_0$};

\end{tikzpicture}
\caption{The gambler's ruin problem with $3$ players: extending  the Perron-Frobenius eigenfunction $\phi_0$ into a global $N\mathbb Z^2$ periodic eigenfunction. The function $\phi_0$ vanishes at the blue dots.} \label{GR2phi}\end{center}
\end{figure}
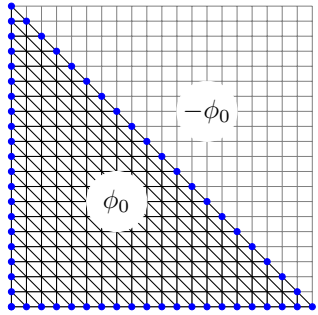  

We end this section with the treatment of the ($2$-dimensional) $3$-player gambler's ruin problem depicted in Figure \ref{GR2}.
\begin{exa}[The $3$-player gambler's ruin problem]  \label{exa-GR}
The notation is described in the introduction. Theorem \ref{theo-PUo} applies to the $3$-player gambler's ruin problem (see Section~\ref{sec-hitting}).  In this case, as in the other examples discussed above, it is possible to compute the Perron-Frobenius eigenfunction exactly. This is related to the fact that the eigenfunctions of (Euclidean) equilateral triangles can be computed in closed trigonometric form, a fact first observe by Lam\'e. See the related history in \cite{McC3} and the treatment in \cite{McC0,McC1,McC2}. We explain the computation in detail in the square lattice coordinate system for the convenience of the reader.

First, we compute $\phi_0$ and $\beta_0$ (this is possible in closed form only in dimension $2$). Note that $\phi_0$, being the unique Perron-Frobenius eigenfunction (up to a multiplicative constant), must be symmetric with respect to swapping the two coordinates. We extend $\phi_0$ into a function defined in the entire square $\{0,\dots,N\}^2$ so that the symmetry with respect to  $x_1+x_2=N$  changes the extended $\phi_0$ into $-\phi_0$ (and we still call this extension $\phi_0$). We then extend this function to the entire grid $\mathbb Z^2$ by using translations by $N\mathbb Z^2$.  We now have a function defined on all of $\mathbb Z^2$ and, by construction,  this function is a  $N\mathbb Z^2$ periodic solution of  $K\phi_0=\beta_0\phi_0$ where $K$ given 
for all pairs $(x_1,x_2),(y_1,y_2)\in \mathbb Z^2$ is given by
$$K((x_1,x_2),(y_1,y_2))= \left\{\begin{array}{cl} 1/6  & \mbox{ if }  |x_1-y_1|+|x_2-y_2|=1,\\
 1/6 & \mbox{  if  }  x_1-y_1=y_2-x_2=\pm 1,\\
 0& \mbox{ otherwise.}\end{array}\right.$$ 
 
Global periodic solutions of the equation  $K\phi=\beta\phi$ must be linear compositions of functions of the type   $e^{i y\cdot x}$ with
$$\beta= \frac{1}{3}\left(\cos a+\cos b+2+ \cos (a-b)\right), (a,b)\in \frac{2\pi}{N} \mathbb Z^2 $$
Constant functions correspond to   $a=b=0$.   The second smallest eigenvalue for this  problem is
$$\beta= \frac{1}{3} \left(1+2\cos \frac{2\pi}{N}\right)$$
with a 6 dimensional  real eigenspace spanned by
$$\sin \frac{2\pi x_1}{N}, \sin \frac{2\pi x_2}{N}, \sin \frac{2\pi (x_1+x_2)}{N} $$
and their cosine counterparts (which we will not use).   In this eigenspace, consider the function 
\begin{eqnarray}\phi((x_1,x_2))&=& \sin \frac{2\pi x_1}{N}+ \sin \frac{2\pi x_2}{N} -\sin \frac{2\pi (x_1+x_2)}{N} \\
&=&   \sin \frac{2\pi x_1}{N} \left(1-\cos \frac{2\pi x_2}{N}\right)+ \sin \frac{2\pi x_2}{N} \left(1-\cos \frac{2\pi x_1}{N}\right) \nonumber \\
&=&\sin \frac{2\pi x_1}{N}+ \sin \frac{2\pi x_2}{N} +\sin \frac{2\pi (N-(x_1+x_2))}{N}  .
\nonumber \nonumber\end{eqnarray}
This function vanishes when $x_1=0$, when $x_2=0$ and also when $x_1+x_2=N$. Furthermore, by careful inspection, $\phi\ge 0$ in the triangle 
$$U\cup \partial U=\{(x_1,x_2): 0\le x_1,\; 0\leq x_2,\;x_1+x_2\le N\}.$$  It follows that
it must be the case  that
$$\beta_0=  \frac{1}{3} \left(1+2\cos \frac{2\pi}{N}\right) $$
and  
$$\phi_0((x_1,x_2))=\frac{2}{\sqrt{3}N}\left(\sin \frac{2\pi x_1}{N}+ \sin \frac{2\pi x_2}{N} -\sin \frac{2\pi (x_1+x_2)}{N}\right).$$

The following uniform two-sided  estimate captures some of the essential information regarding the behavior of $\phi_0$, namely,
\begin{equation}\label{GRphi}
\phi_0((x_1,x_2))\approx \frac{1}{N^7}x_1x_2(x_1+x_2)(N-x_1)(N-x_2)(N-(x_1+x_2)).
\end{equation}
This captures all the symmetries of the problem. The value of $\phi_0$ at the central point $([N/4],[N/4])$ is roughly $\frac{1}{N}$ as expected (i.e., $1/\sqrt{\pi(U)}$). If one approaches any of the three corners along its median,
$\phi_0$ vanishes as the cube of the distance to the corner. For the vertical part of the boundary, $\{(0,y): 1\le y<N\}$, Theorem \ref{theo-PUo}
gives, 
$$P_U(([N/4],[N/4]),(0,y)) 
\approx  \frac{N^2}{N} \frac{y^2N(N-y)^2}{N^7}\approx \frac{y^2(N-y)^2}{N^5}.$$
Of course a similar formula holds for the other two sides of the triangle.  Along the diagonal side $\{(x,N-x):1 \leq x < N\}$, the formula reads
$$P_U(([N/4],[N/4]),(x,N-x))\approx   \frac{x^2(N-x)^2}{N^5}.$$ In Section~\ref{sec-ex} we complete the description of harmonic measure, giving approximations valid for all starting positions. \end{exa}

\section{Inner-uniform domains and global two-sided estimates}\label{sec-inner-uniform}

\subsection{Inner-uniform domains}\label{subsec-inner-uniform}
 
We now describe a large class of domains for which the hypotheses of Theorem \ref{theo-PUo} can be verified thanks to the results obtained by the authors in \cite{DHSZ}.
 For an inner-uniform domain (described below), we amplify Theorem \ref{theo-PUo} by giving two sided estimates 
 of $P_U(x,y)$ which are uniform in $x\in U$ and $y\in \partial U$.
 
The following definition is well-known in the context of Riemannian and conformal geometry. See~\cite{DHSZ} for a more complete discussion and pointers to the literature. All the domains discussed in Examples \ref{exa-box3}--\ref{exa-GR} in the previous section are inner-uniform (in a rather trivial way).

\begin{defin} A domain $U \subseteq \mathfrak X$ is an inner $(\alpha,A)$-uniform domain (with respect to the graph structure $(\mathfrak X,\mathfrak E)$)
	if  for any two points $x,y\in U$ there exists a path $\gamma_{xy}=(x_0=x,x_1,\dots,x_k=y)$ joining $x$ to $y$ in  $(U, \mathfrak E_U)$ with 
	the properties that:
	\begin{enumerate}
		\item  $k  \le A d_U(x,y)$;
		\item  For any $j \in \{0,\ldots, k\}$, $d (x_j,\mathfrak X \setminus U) \ge  \alpha (1+\min \{ j,k-j\})$. 
	\end{enumerate}
\end{defin}

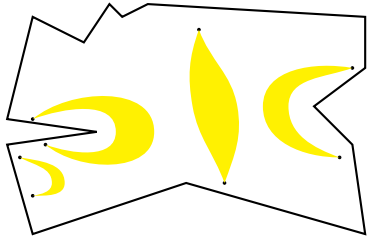
\begin{figure}[h] 
\begin{center}
\begin{tikzpicture}[scale=.17]
\draw[thick]  (8,10) -- (1,11) --(3,19) --( 7,17)--(9,20) --(10,19)--(12,20)--(29,19)-- (29,15)-- (25,12)-- (28,9)--( 29,2) -- 
(15,6)-- (3,2) -- (1,9) -- (8,10);

\draw[fill] (2,8) circle [radius=.1];
\draw[fill] (3,5) circle [radius=.1];
\path[fill=yellow]  (2,8) to [out=-10, in= 90]  (5.5,6)  to [out=-90, in=0]  (3,5)  to [out =10, in=-90] (4.5,6)  to [out=90, in=-30] (2,8) ;

\draw[fill] (4,9) circle [radius=.1];
\draw[fill] (3,11) circle [radius=.1];
\path[fill=yellow]  (4,9) to [out=-40, in= -90]  (12.5,10)  to [out=90, in=40]  (3,11)  to [out =20, in=90] (9.5,10)  to [out=-90, in=-20] (4,9) ;

\draw[fill] (28,15) circle [radius=.1];
\draw[fill] (27,8) circle [radius=.1];
\path[fill=yellow]  (28,15) to [out=170, in= 90]  (21,12)  to [out=-90, in=-180]  (27,8)  to [out =160, in=-90] (23,12)  to [out=90, in=-160] (28,15) ;

\draw[fill] (16,18) circle [radius=.1];
\draw[fill] (18,6) circle [radius=.1];
\path[fill=yellow]  (16,18) to [out=-70, in= 100]  (19,12)  to [out=-80, in=70]  (18,6)  to [out =110, in=-80] (15.5,12)  to [out=100, in=-110] (16,18) ;

\end{tikzpicture}
\caption{An illustration of the inner-uniform condition. Note the banana-shaped region between any two points in $U$.}\label{Banana}
\end{center}
\end{figure}  

Intuitively, $U$ is an inner-uniform domain if, given any two points ${x,y \in U}$, one can form a banana-shaped region between $x$ and $y$ which is entirely contained in $U$. (See Figure~\ref{Banana} for an illustration.) The following is a simple geometric consequence of the definition of inner-uniform domains.
\begin{lem}\label{lem-inner-point} Let $U$ be a finite inner $(\alpha,A)$-uniform domain.  Set 
$$R=\max\{x\in U: d(x,\mathfrak X\setminus U)\}.$$
There are constants $a_1,A_1$ depending only on $\alpha,A$ such that, for  any point $o$ such that $d(o,\mathfrak X\setminus U)=R/2$,  we have
 $$B(o, a_1R)\subset U\subset B_U(o,A_1R).$$
Furthermore, for any point $x\in U$ and any $r>0$, there is a point $x_r\in U$  such that
$$d_U(x,x_r)\le A_1\min\{ r, R\} \mbox{ and }  d(x_r,\mathfrak X\setminus U\})\ge a_1 
\min\{r,R\}.$$
\end{lem}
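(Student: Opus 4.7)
The plan is to derive both assertions directly from the definition of an inner-uniform domain, exploiting the fact that the depth $d(\cdot, \mathfrak X \setminus U)$ along an inner-uniform path grows at least linearly away from both endpoints. The inclusion $B(o, a_1 R) \subset U$ is immediate from the hypothesis $d(o, \mathfrak X \setminus U) = R/2$; any $a_1 < 1/2$ will do. For the other inclusion $U \subset B_U(o, A_1 R)$, I would fix an arbitrary $y \in U$ and apply the inner-uniform condition to the pair $(o, y)$, obtaining a path $(z_0 = o, \ldots, z_k = y)$ with $k \le A\, d_U(o, y)$ and $d(z_j, \mathfrak X \setminus U) \ge \alpha(1 + \min\{j, k-j\})$. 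Evaluating the depth bound at the midpoint $j_0 = \lfloor k/2 \rfloor$ and combining with the trivial upper bound $d(z_{j_0}, \mathfrak X \setminus U) \le R$ (by maximality of $R$) forces $\alpha k/2 \le R$, hence $d_U(o, y) \le k \le 2R/\alpha$. Setting $A_1 \ge 2/\alpha$ handles this half.

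For the second assertion, fix $x \in U$ and $r > 0$, and set $s = \min\{r, R\}$. If $d(x, \mathfrak X \setminus U) \ge a_1 s$ already, take $x_r = x$. Otherwise, I would apply inner-uniformity to $(x, o)$ to obtain a path $(y_0 = x, \ldots, y_k = o)$ in $U$ and walk along it from $x$ until the depth first reaches the target level $a_1 s$. The lower bound $d(y_j, \mathfrak X \setminus U) \ge \alpha(1 + j)$, valid for $j \le k/2$, forces this to happen no later than index $j^* = \lceil a_1 s/\alpha \rceil$, which leads to two cases. If $j^* \le k/2$, take $x_r = y_{j^*}$: then $d_U(x, x_r) \le j^*$ is of order $s$ and $d(x_r, \mathfrak X \setminus U) \ge \alpha(1+j^*) \ge a_1 s$. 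Otherwise $k < 2 j^*$, so the full path is already of length $O(s)$; take $x_r = o$, which has depth $R/2 \ge s/2 \ge a_1 s$ once $a_1 \le 1/2$, and $d_U(x, o) \le k = O(s)$.

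The only real obstacle is bookkeeping: verifying that the constants $a_1, A_1$ depend only on $\alpha, A$ (and not on $x, r, R$, or $U$), and handling the small-$s$ regime $s < 1/a_1$, where $a_1 s < 1 \le d(x, \mathfrak X \setminus U)$ so that $x_r = x$ trivially works. This lets one absorb the additive $+1$ in the estimate $j^* \le a_1 s/\alpha + 1$ into a single multiplicative constant $A_1$, completing the uniformization of constants.
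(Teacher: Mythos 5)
Your proof is correct. The paper does not actually supply an argument for this lemma --- it states the result and the following remark defers to the companion article \cite{DHSZ} for the proof --- so there is no internal proof to compare against. Your argument is exactly the kind of direct, self-contained derivation from the inner-uniformity definition that one would expect: the inclusion $B(o,a_1R)\subset U$ is an immediate consequence of the depth hypothesis on $o$; the inclusion $U\subset B_U(o,A_1R)$ follows by evaluating the inner-uniform depth bound at the midpoint of the banana path $\gamma_{oy}$ and comparing with the trivial upper bound $R$, forcing $d_U(o,y)\le k\le 2R/\alpha$; and the $x_r$ construction walks from $x$ toward $o$ along an inner-uniform path until the depth first reaches level $a_1 s$, with the two cases $j^*\le k/2$ and $j^*>k/2$ covering all possibilities. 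Your remarks on absorbing the additive $+1$ and on the trivial small-$s$ regime (where $a_1 s<1\le d(x,\mathfrak X\setminus U)$, so one takes $x_r=x$) handle the only genuine bookkeeping concerns.

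One small point worth flagging, though it is an issue with the lemma's statement rather than with your argument: when $R$ is odd there need not be a vertex $o$ with $d(o,\mathfrak X\setminus U)$ equal to the non-integer $R/2$, so the first assertion of the lemma is then vacuous; yet your proof of the second assertion invokes that same $o$ in the case $j^*>k/2$. This is harmless --- one can instead take any deepest point $o'$ (with $d(o',\mathfrak X\setminus U)=R\ge s\ge a_1 s$) as the fallback $x_r$, and the rest of the argument is unchanged --- but stating that explicitly would make the second assertion manifestly independent of the existence of the half-depth center.
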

\begin{rem} In what follows, for each $x\in U$ and $r>0$, we fix a point $x_r$ with the properties stated above. The exact choice of these $x_r$  among all points with the desired properties is unimportant.
Typically, for $r\ge R$, we pick $x_r=o$. See~\cite{DHSZ} for a proof of the existence of such a point.
\end{rem}

\begin{theo}[\cite{DHSZ}] \label{th-H}
Fix  $\alpha\in (0,1]$ and $A\ge 1$.
Assume that $(\mathfrak X,\mathfrak E,\pi,\mu)$ is a $2$-Harnack graph satisfying the ellipticity condition {\em (\ref{elliptic})} and that $U$ is a finite inner $(\alpha,A)$-uniform domain with Perron-Frobenius  eigenvalue and  eigenfunction
$\beta_0,\phi_0$.  Then the chain $(K_{\phi_0},\pi_{\phi_0})$ on $(U,\mathfrak E_U)$
is a $2$-Harnack chain  with Harnack constant depending only on $C_H$, the Harnack constant of $(\mathfrak X,\mathfrak E,\pi,\mu)$, the ellipticity constant $P_e$ and the inner-uniformity constants $\alpha, A$. 
\end{theo}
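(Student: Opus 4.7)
My plan is to verify the Barlow--Bass characterization of Theorem \ref{th-D-BB} for the chain $(K_{\phi_0},\pi_{\phi_0})$ on the weighted graph $(U,\mathfrak{E}_U,\mu^{\phi_0},\pi_{\phi_0})$ with $\theta=2$: namely, volume doubling, a scale-$r^2$ Poincar\'e inequality, and the cut-off condition $\mbox{CS}(2)$, all measured with respect to the inner distance $d_U$. Since $\theta=2$, the cut-off condition is automatic by the last sentence of Theorem \ref{th-D-BB}, so the work lies entirely in doubling and Poincar\'e.

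The pivotal analytic input is a discrete boundary Harnack principle for the Perron--Frobenius eigenfunction: for each $x\in U$ and each scale $r\in(0,R]$, with $x_r$ the interior point supplied by Lemma \ref{lem-inner-point},
\begin{equation}\label{eq-planphi0}
\phi_0(y)\approx \phi_0(x_r)\quad\text{for all } y\in B_U(x,r),
\end{equation}
with comparability constants depending only on $\alpha,A,C_H,P_e$. To establish \eqref{eq-planphi0} I would combine (i) the elliptic Harnack inequality on $(\mathfrak{X},\mathfrak{E},\pi,\mu)$ (which follows from the parabolic Harnack inequality by time freezing and gives $\phi_0(y)\approx\phi_0(y')$ on any ambient ball $B(y,\rho)$ sitting well inside $U$), with (ii) the banana/carrot geometry of an inner-uniform domain, which allows one to chain Harnack comparisons along a path whose $j$-th vertex sits at distance $\gtrsim \min\{j,k-j\}$ from $\mathfrak{X}\setminus U$. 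The resulting discrete Aikawa-type telescoping---carried out in detail in \cite{DHSZ}---yields \eqref{eq-planphi0}.

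From \eqref{eq-planphi0}, doubling of $\pi_{\phi_0}=\phi_0^2\pi|_U$ with respect to $d_U$ is immediate: one writes
\[\pi_{\phi_0}(B_U(x,r))\approx \phi_0(x_r)^2\,\pi(B_U(x,r))\]
and invokes doubling of $\pi$ on $\mathfrak{X}$ together with the doubling of $r\mapsto\phi_0(x_r)^2$, which itself drops out of applying \eqref{eq-planphi0} at scales $r$ and $2r$ and comparing $x_r$ with $x_{2r}$ via the carrot property. For the weighted Poincar\'e inequality on $(U,\mathfrak{E}_U,\mu^{\phi_0},\pi_{\phi_0})$ with constant $C_Pr^2$ on $B_U(x,r)$, I would build a Whitney-type decomposition of $B_U(x,r)$ into ambient balls of the form $B(z,cd(z,\mathfrak{X}\setminus U))$ adapted to the inner-uniform structure, apply the ambient Poincar\'e inequality (available via Theorem \ref{th-D-BB} on $\mathfrak{X}$) inside each Whitney piece where $\phi_0$ is comparable to a single constant by \eqref{eq-planphi0}, and reassemble by a carrot-chaining argument connecting each Whitney piece to the distinguished interior point $x_r$.

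The serious obstacle is precisely this Whitney chaining step for the $\phi_0$-weighted Poincar\'e inequality: the decomposition must remain quantitative near $\partial U$ (where the Whitney blocks degenerate in size and $\phi_0$ vanishes), the chaining must control both the number of blocks crossed and the overlap multiplicity, and every constant must ultimately depend only on $\alpha,A,C_H,P_e$. This is the central analytic content of \cite{DHSZ}, and I would invoke that machinery in black-box form rather than reconstruct it; with doubling and Poincar\'e established, Theorem \ref{th-D-BB} then delivers the 2-Harnack property for $(K_{\phi_0},\pi_{\phi_0})$ with the announced dependence of constants.
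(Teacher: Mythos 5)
Your high-level plan coincides with the paper's: verify the Barlow--Bass/Delmotte characterization of Theorem \ref{th-D-BB} for $(K_{\phi_0},\pi_{\phi_0})$ with $\theta=2$, dispense with $\mbox{CS}(2)$ automatically, and reduce to volume doubling plus a scale-$r^2$ Poincar\'e inequality on $(U,\mathfrak E_U,\mu^{\phi_0},\pi_{\phi_0})$. That is exactly how the paper proceeds, and it also names the Carleson estimate for $\phi_0$ (Theorem \ref{th-C}) as the pivotal technical input.

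However, your pivotal lemma \eqref{eq-planphi0} is false as stated, and this is a genuine gap. You claim the two-sided pointwise comparability $\phi_0(y)\approx\phi_0(x_r)$ for \emph{all} $y\in B_U(x,r)$. The lower bound direction fails whenever $y$ is near $\partial U$: the function $\phi_0$ vanishes at $\partial U$, while $x_r$ sits at distance $\gtrsim r$ from $\partial U$ where $\phi_0$ is of typical size. Concretely, for the three-player gambler's ruin domain, take $x$ one step away from a flat side of the triangle and $r\approx N/10$; then $\phi_0(x)/\phi_0(x_r)\approx 1/r$, which goes to zero. What is actually true, and what Theorem \ref{th-C} provides, is weaker on two counts: (i) the Carleson estimate is one-sided, $\max_{y\in B_U(x,r)}\phi_0(y)\le C\,\phi_0(x_r)$; and (ii) the two-sided statement concerns the \emph{weighted volume}, $\pi_{\phi_0}(B_U(x,r))\approx\phi_0(x_r)^2\pi(B(x,r))$, not pointwise values. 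Your derivation of doubling ``immediately'' from \eqref{eq-planphi0} therefore rests on a premise that does not hold; the conclusion is nonetheless correct because the volume comparison (ii) is exactly what is needed, but it requires a separate lower-bound argument (e.g., that a fixed proportion of $B_U(x,r)$ near $x_r$ carries $\phi_0\gtrsim\phi_0(x_r)$). For the Poincar\'e step your Whitney argument can be salvaged: inside a Whitney block $B(z,c\,d(z,\mathfrak X\setminus U))$ the \emph{elliptic} Harnack inequality does give genuine two-sided comparability of $\phi_0$ to $\phi_0(z)$, so it is this interior Harnack comparability (not \eqref{eq-planphi0}) that should be cited there, and the global assembly then relies on the Carleson upper bound to control weights near $\partial U$. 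In short, replace \eqref{eq-planphi0} by the pair (Carleson upper bound, weighted-volume comparison) and the plan aligns with the paper's; as written, the central lemma overstates what boundary Harnack principles can give.
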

\begin{proof}[Outline of the proof] The proof consists in showing that the weighted graph
on $(U,\mathfrak E_U)$ associated with $(K_{\phi_0},\pi_{\phi_0})$ satisfies the doubling condition and the Poincar\'e inequality on balls with constant $Cr^2$, where $C$ depends only on $C_H,P_e,\alpha$ and $A$. Once this is done, the result follows from Theorem \ref{th-D-BB} (in the case $\theta=2$ used here, the result is due to Delmotte). One of the keys to proving the desired  doubling and Poincar\'e inequality on balls is the following Carleson type estimate for $\phi_0$. We state this result because of its importance and also because it allows us to compute the volume for $\pi_{\phi_0}$ in a more explicit way.
\end{proof}
 \begin{theo}[\cite{DHSZ}] \label{th-C}
 Assume that $(\mathfrak X,\mathfrak E,\pi,\mu)$ is a $2$-Harnack graph satisfying the ellipticity condition {\em (\ref{elliptic})} and that $U$ is a finite inner $(\alpha,A)$-uniform domain with Perron-Frobenius  eigenfunction
$\phi_0$.    Then there is a constant  $C_U$ depending only on $C_H$, the Harnack constant of $(\mathfrak X,\mathfrak E,\pi,\mu)$, the ellipticity constant $P_e$ and the inner-uniformity constants $\alpha, A$ such that,
for any $R>0$, $x\in U$, and $x_r$ (defined in Lemma~\ref{lem-inner-point}),
$$\max_{y\in B_U(x, r)}\{\phi_0(y)\}\le C_U \phi_0(x_r).$$
Moreover,   for any  $x\in U$ and $r\in (0,2A_1R)$, the $\pi_{\phi_0}$ volume of $B_U(x,r)$ satisfies 
$$ \sum_{y\in B_U(x,r)} \pi_{\phi_0}(y) \approx  \pi(B(x,r)) \phi_0(x_r)^2.$$
 \end{theo}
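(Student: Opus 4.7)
The plan is to prove the Carleson-type inequality (first display) first, then derive the volume comparison (second display) by combining it with the elliptic Harnack inequality near $x_r$. Two structural features of $\phi_0$ underlie both steps. Extended by zero on $\mathfrak{X}\setminus U$, the function $\phi_0$ is subharmonic on the full graph, since $\Delta \phi_0 = (1-\beta_0)\phi_0 \ge 0$ at every interior vertex of $U$, so the discrete maximum principle applies on arbitrary finite subdomains. At the same time, $v(t,z) = \beta_0^{\,t}\phi_0(z)$ is a nonnegative solution of the heat equation $\partial_t v + \Delta v = 0$ on every space-time cylinder contained in $U$, so the $2$-parabolic Harnack hypothesis on $(\mathfrak{X}, \mathfrak{E}, \pi, \mu)$ yields an elliptic-type Harnack comparison $\phi_0(z') \le C \phi_0(z'')$ for $z', z''$ in any ball $B(z_0, 2\rho+1) \subset U$ with $\rho^2 \lesssim T_U$; by~\eqref{beta0theta} this scale constraint is harmless whenever $\rho = O(R)$.

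For the Carleson estimate, fix $y \in B_U(x,r)$ and take an inner-uniform path $\gamma = (y_0 = y, y_1, \dots, y_k = x_r)$ of length $k \lesssim r$ with $d(y_j, \mathfrak{X}\setminus U) \ge \alpha(1 + \min\{j, k-j\})$. The idea is to chain elliptic Harnack applications on balls along $\gamma$ of radius comparable to the current boundary distance. A direct chain requires $O(\log r)$ balls near $y$ and produces only a bound of the form $(r/d(y,\partial U))^{O(1)}\phi_0(x_r)$, which is strictly weaker than Carleson. The decisive additional ingredient is a boundary estimate in the style of Aikawa: using subharmonicity together with $\phi_0 = 0$ on $\partial U$, one proves that for any subdomain $D \subset U$ adjacent to $\partial U$, the supremum $\sup_D \phi_0$ is controlled by the value of $\phi_0$ at a single corkscrew point for $D$ at scale $\operatorname{diam}(D)$. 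Inner-uniformity is precisely the geometric input that supplies such corkscrew points at every scale, so iterating this boundary estimate a bounded number of times carries one from $y$ to a point at distance $\gtrsim r$ from $\partial U$ using $O(1)$ boundary-Harnack steps rather than $O(\log r)$ purely interior ones. Joining that deep point to $x_r$ by an interior Harnack chain of uniformly bounded combinatorial length along the bulk of $\gamma$ yields $\phi_0(y) \le C_U \phi_0(x_r)$, uniformly in $y$.

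For the volume estimate, the Carleson bound immediately gives the upper half:
\begin{equation*}
\sum_{z\in B_U(x,r)} \phi_0(z)^2\, \pi(z) \le C_U^2\, \phi_0(x_r)^2\, \pi(B_U(x,r)) \le C_U^2\, \phi_0(x_r)^2\, \pi(B(x,r)),
\end{equation*}
since $d \le d_U$ forces $B_U(x,r) \subseteq B(x,r)$. For the matching lower bound, set $\rho = c_0 \min\{r,R\}$ with $c_0 = c_0(\alpha, A)$ small. By Lemma~\ref{lem-inner-point}, $B(x_r, \rho) \subset U$, and the elliptic Harnack inequality for $\phi_0$ in this ball (admissible since $\rho = O(R)$) gives $\phi_0 \gtrsim \phi_0(x_r)$ throughout. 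Because $d_U(x, x_r) \le A_1 r$ and $d_U(x_r, z) \le \rho$ for $z \in B(x_r, \rho)$ (the geodesic stays inside $U$), the ball $B(x_r, \rho)$ lies in $B_U(x, C_1 r)$ for some $C_1 = C_1(\alpha, A)$, so doubling of $\pi$ yields $\sum_{B_U(x, C_1 r)} \phi_0^2\, \pi \gtrsim \phi_0(x_r)^2\, \pi(B(x,r))$; reabsorbing the bounded factor $C_1$ into the symbol $\approx$ completes the argument. The main obstacle is the Carleson step, and within it the Aikawa-type boundary argument that removes the boundary loss in the Harnack chain---this is where both the inner-uniform constants $(\alpha, A)$ and the Dirichlet condition on $\phi_0$ enter essentially, and where the full discrete adaptation carried out in \cite{DHSZ} is invoked.
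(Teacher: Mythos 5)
The paper does not prove Theorem~\ref{th-C}; it is imported from \cite{DHSZ} and no argument is given here, so there is no in-paper proof to compare yours against. That said, the high-level picture you give (Carleson first, then the volume comparison via elliptic Harnack and doubling) does match the structure of the result, and your use of the caloric function $v(t,z)=\beta_0^t\phi_0(z)$ to extract an elliptic Harnack comparison from the $2$-parabolic hypothesis is sound. Two comments on the set-up: with the paper's convention $\Delta=I-K$, the relation $\Delta\phi_0=(1-\beta_0)\phi_0\ge 0$ on $U$ says $\phi_0\ge K\phi_0$, which makes $\phi_0$ \emph{super}harmonic (it satisfies a minimum principle), not subharmonic; and the extension by zero fails this inequality on $\partial U$, so ``arbitrary finite subdomains'' should be ``subdomains of $U$.'' Neither of these is fatal, but the phrase ``discrete maximum principle'' applied to $\phi_0$ points in the wrong direction for a Carleson bound.

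The substantive gap is in the Carleson step. The item you present as the ``decisive additional ingredient'' --- that for any subdomain $D\subset U$ adjacent to $\partial U$ the supremum $\sup_D\phi_0$ is controlled by $\phi_0$ at a single corkscrew point at scale $\operatorname{diam}(D)$ --- \emph{is} the Carleson estimate. Asserting it, and then ``iterating a bounded number of times,'' is circular; you have not supplied a mechanism that upgrades the $(r/d(y,\mathfrak X\setminus U))^{O(1)}$ loss from the Harnack chain to a uniform constant. What is actually needed (and what the Aikawa scheme provides) is a contradiction bootstrap: suppose $\phi_0(z)>A\phi_0(x_r)$ for some $z\in B_U(x,r)$ and large $A$; the interior Harnack chain along the inner-uniform curve from $z$ to $x_r$ forces $d(z,\mathfrak X\setminus U)\lesssim rA^{-\epsilon}$ with $\epsilon=\epsilon(\alpha,A,C_H)>0$; the Dirichlet condition $\phi_0=0$ on $\partial U$ combined with a local growth estimate at scale $d(z,\mathfrak X\setminus U)$ then produces another point at a strictly smaller scale where $\phi_0$ exceeds $A^{1+\delta}\phi_0(x_r)$; iterating gives arbitrarily large values, a contradiction. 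None of this machinery --- the quantitative distance-to-boundary decay, the oscillation step, the dyadic scale iteration --- appears in your sketch, and it is exactly here that the inner-uniformity constants and the $2$-Harnack constant enter. Finally, in the volume lower bound, ``reabsorbing the bounded factor $C_1$ into $\approx$'' is not legitimate because it changes the ball, not a multiplicative constant; the correct fix is to run the argument with $x_{r'}$ for $r'=r/(2A_1)$ so that $B(x_{r'},\rho)\subset B_U(x,r)$, and then use the comparability $\phi_0(x_{r'})\approx\phi_0(x_r)$ from the Corollary following Theorem~\ref{th-C} together with doubling of $\pi$.
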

 The following estimates are derived from the properties of $\phi_0$ stated above and the geometry of inner-uniform domains. They will be useful in extracting usable formulas for $P_U(t,x,y)$.
 \begin{cor} There  are constants $a_1,A_1, A_2$, which depend only on the Harnack constant of $(\mathfrak X,\mathfrak E,\pi,\mu)$ and on $(\alpha,A)$, such that for all $x,z\in U$ and $r>0$,     
$$   a_1 (1+d_U(x,z))^{-A_1} \le  \frac{\phi_0(x)}{\phi_0(z)}    \le  A_1(1+ d_U(x,z))^{A_1},$$
and, whenever $d_U(x,z)\le A_2r$   and $0<s<r$,
$$a_1  \le   \frac{\phi_0(x_r)}{\phi_0(x_s)}  \le  A_1\left( \frac{r}{s}\right)^{A_1}\;\;\mbox{ and }\;\;\;a_1 \le \frac{\phi_0(x_r)}{\phi_0(z_r)}\le A_1.$$
 \end{cor}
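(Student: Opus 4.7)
The plan is to reduce all four inequalities to two basic tools: an elliptic Harnack inequality for $\phi_0$ on balls well-contained in $U$, and the Carleson estimate of Theorem~\ref{th-C}. These are then combined into Harnack chains along the inner-uniform paths guaranteed by the hypothesis on $U$ and by Lemma~\ref{lem-inner-point}.

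I would first establish an elliptic Harnack inequality for $\phi_0$. Extending $\phi_0$ by zero outside $U$, the function $u(t,x)=\beta_0^{t}\phi_0(x)$ is nonnegative and satisfies the discrete heat equation $u(t+1,x)=\sum_y K(x,y)u(t,y)$ at every $x\notin\partial U$. Applying the $2$-Harnack hypothesis to $u$ on a parabolic cylinder $Q'(R,t_0,y_0)$ with $B(y_0,2R+1)\subset U$ and absorbing the factor $\beta_0^{O(R^2)}\le 1$ yields $\phi_0(y_1)\le C_E\,\phi_0(y_2)$ for all $y_1,y_2\in B(y_0,R)$. The key consequence is a Harnack chain for \emph{deep} points: whenever $y_1,y_2\in U$ both satisfy $d(y_i,\mathfrak{X}\setminus U)\ge c\rho$ and $d_U(y_1,y_2)\le M\rho$, one has $\phi_0(y_1)\approx\phi_0(y_2)$ with constants depending only on $c,M,\alpha,A,C_H,P_e$; this follows by covering the inner-uniform path from $y_1$ to $y_2$ by a bounded number of Harnack balls of radius of order $c\rho$ whose depth remains of order at least $c\rho$ throughout (using the endpoint depths together with the ``banana'' clause of inner-uniformity at intermediate indices).

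For the reference-point inequalities, the upper bound $\phi_0(x_r)/\phi_0(x_s)\le A_1(r/s)^{A_1}$ is obtained by chaining through dyadic scales $s,2s,4s,\dots,2^ks\approx r$: at each step $x_{2^\ell}\to x_{2^{\ell+1}}$, both points have depth at least of order $2^\ell$ and mutual inner distance $O(2^\ell)$, so the deep-points conclusion contributes one constant factor, and iterating over $O(\log_2(r/s))$ scales gives the polynomial bound. The lower bound $a_1\le \phi_0(x_r)/\phi_0(x_s)$ follows from Carleson applied to $B_U(x_s,r)$, which gives $\phi_0(x_s)\le C_U\,\phi_0((x_s)_r)$, combined with the deep-points conclusion $\phi_0((x_s)_r)\le C\,\phi_0(x_r)$ (both reference points at depth of order $r$ and mutual inner distance $O(r)$). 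The companion bound $\phi_0(x_r)\approx\phi_0(z_r)$ when $d_U(x,z)\le A_2 r$ is a direct application of the deep-points conclusion.

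Finally, for the global polynomial ratio $\phi_0(x)/\phi_0(z)\le A_1(1+d_U(x,z))^{A_1}$ (the lower bound follows by symmetry in $x,z$), set $r=d_U(x,z)$ and $s=d(z,\mathfrak{X}\setminus U)\ge 1$, and chain four inequalities: Carleson gives $\phi_0(x)\le C_U\,\phi_0(x_r)$; the companion bound of the previous paragraph gives $\phi_0(x_r)\le C\,\phi_0(z_r)$; the polynomial reference-point bound gives $\phi_0(z_r)\le C(r/s)^{A_1}\phi_0(z_s)\le Cr^{A_1}\phi_0(z_s)$; and the deep-points conclusion at scale $s$ (with $z$ at depth $\ge s$ and $z_s$ at depth $\ge a_1 s$, mutual inner distance $\le A_1 s$) gives $\phi_0(z_s)\le C\,\phi_0(z)$. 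The main obstacle I expect is the careful verification of the deep-points Harnack chain: one must track exactly how the depth evolves along the inner-uniform path, combining the endpoint depths with the inner-uniformity property to maintain a uniform lower bound of order $c\rho$ throughout, so that Harnack balls of radius of order $c\rho$ can be used at every step of the chain. Once this step is in hand, the rest is a tidy assembly of Carleson, dyadic scales, and the triangle inequality.
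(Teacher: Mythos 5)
Your proposal is correct and follows essentially the same route the paper gestures at in its one-line sketch (``properties of $\phi_0$, inner-uniformity of $U$, and chains of Harnack balls for $\phi_0$ in $U$''): (i) derive an elliptic Harnack inequality for $\phi_0$ from the parabolic $2$-Harnack hypothesis by observing that $u(t,x)=\beta_0^t\phi_0(x)$ is a nonnegative caloric function away from $\partial U$; (ii) convert this into a ``deep-points'' Harnack chain along the banana region given by inner-uniformity; and (iii) combine with the Carleson estimate of Theorem~\ref{th-C} via dyadic telescoping. The verification that $u$ solves the discrete heat equation on $U$ and outside $U\cup\partial U$ (but not on $\partial U$) is accurate, the bookkeeping of depths along the inner-uniform path is sound, and the assembly of the four chained inequalities for the global polynomial bound works.

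Two small points worth making explicit when you write this up. First, in the last paragraph you take $s=d(z,\mathfrak X\setminus U)$ and then invoke the reference-point bound $\phi_0(z_r)\le C(r/s)^{A_1}\phi_0(z_s)$, which requires $s<r$; if instead $s\ge r$ (the point $z$ is deep relative to $d_U(x,z)$), one should replace $s$ by $s'=\min\{s,r'\}$ with $r'=\max\{r,1\}$, in which case $r'/s'\le 1$ and the ratio is bounded by an absolute constant---an even better estimate. Second, when $\rho$ is of bounded size the Harnack balls degenerate and the chain of Harnack balls alone does not close the argument; one then falls back on the ellipticity condition (\ref{elliptic}), which gives $\phi_0(y')\le P_e\,\phi_0(y)$ for neighbors $y\sim y'$ and hence a uniform comparison across any bounded inner distance. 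Neither affects the validity of the overall approach, and both are implicitly part of the paper's intended argument.
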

 \begin{rem}The following useful estimate can be derived from this corollary. There is a constant $A_1'>0$ such that, for any $x,z\in U$ and $0<r\le d_U(x,z)$,
$$ \frac{\phi_0(x_r)}{\phi_0(z_r)}\le A'_1 \left(\frac{d_U(x,z)}{r}\right)^{A'_1} .$$
 \end{rem}
 These statements are proved using the properties of $\phi_0$, the inner-uniformity of $U$ and chains of Harnack balls for $\phi_0$  in $U$.
 
 \subsection{The tale of three boundaries}
 
 \begin{figure}[h] 
\begin{center}
\begin{tikzpicture}[scale=.15]
\draw [help lines] (0,0) grid (40,40);
\foreach \x in {0, 1,2,3,4,5,6,7,8}
\draw [fill, blue]  (21 +\x,20+\x) circle  [radius=.2];
\foreach \x in {0,1,2,3,4,5,6,7}
\draw [fill, blue]  (20 +\x,19-\x) circle  [radius=.2];
\foreach \x in {0,1,2,3,4,5,6,7,8}
\draw [fill, blue]  (19 -\x,19) circle  [radius=.2];
\foreach \x in {0,1,2,3,4,5,6,7,8,9,10,11,12,13,14,15,16,17,18,19,20,21,22,23,24,25,26,27,28,29,30}
\draw [fill, blue]  (5+\x,35) circle  [radius=.2];
\foreach \x in {0,1,2,3,4,5,6,7,8,9,10,11,12,13,14,15,16,17,18,19,20,21,22,23,24,25,26,27,28,29,30}
\draw [fill, blue]  (35,35-\x) circle  [radius=.2];
\foreach \x in {0,1,2,3,4,5,6,7,8,9,10,11,12,13,14,15,16,17,18,19,20,21,22,23,24,25,26,27,28,29,30}
\draw [fill, blue]  (35-\x, 5) circle  [radius=.2];
\foreach \x in {0,1,2,3,4,5,6,7,8,9,10,11,12,13,14,15,16,17,18,19,20,21,22,23,24,25,26,27,28,29,30}
\draw [fill, blue]  (5, 5+\x) circle  [radius=.2];

\end{tikzpicture}
\caption{A domain $U$, where the blue dots indicate absorbing boundary points. Consider the central point, where the three interior absorbing lines meet. To study the probability that a random walk is absorbed at the central point, we need to consider the three very different types of paths it could have taken: from above, below, or the right. Can we define an alternative notion of the boundary of $U$ that resolves this problem?}\label{fig-B*}
\end{center}
\end{figure}
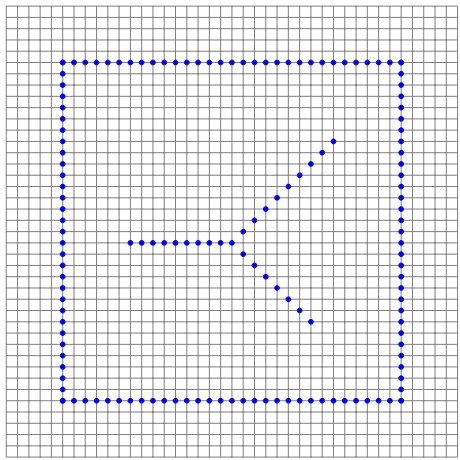
 
Before providing a deeper exploration of the exit positions, it is useful to take a look at the intrinsic boundary of a finite domain $U$.  So far we have taken the point of view that
 the boundary of $U,$ $\partial U,$ is defined as the set of those points $y$ in the the ambient space $\mathfrak X$ such that there is at least one edge $\{x,y\}\in \mathfrak E$ with $x\in U$.  We also extended the intrinsic distance $d_U$ so as to define $d_U(x,y)$ when $x\in U$ and $y\in \partial U$ by setting $d_U(x,y)=\min\{1+d_U(x,z): z\in U, \{z,y\}\in \mathfrak E\}$. The attentive reader will have noticed that this does not define a distance on $U\cup \partial U$, in general, even after setting $d_U(x,y)= \min\{1+d(z,y): z\in U\}$ for $x,y\in \partial U$.  This is because a given point on $\partial U$ may be approachable from within $U$  through several very distinct directions.    See  Figure \ref{fig-B*}.

 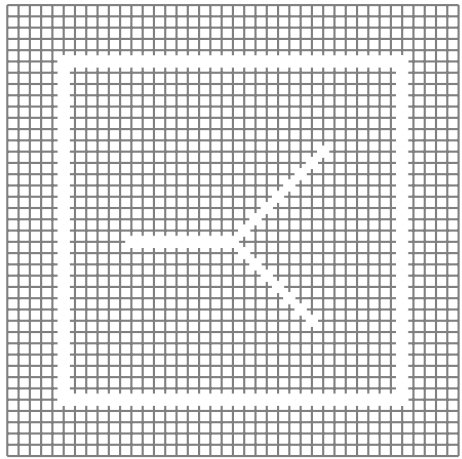
\begin{figure}[h] 
\begin{center}
\begin{tikzpicture}[scale=.15]
\draw [help lines, thick] (0,0) grid (40,40);
\foreach \x in {0,1,2,3,4,5,6,7}
\draw [fill, white]  (21 +\x,20+\x) circle  [radius=.5];
\foreach \x in {0,1,2,3,4,5,6,7}
\draw [fill, white]  (20 +\x,19-\x) circle  [radius=.5];
\foreach \x in {0,1,2,3,4,5,6,7,8}
\draw [fill, white]  (19 -\x,19) circle  [radius=.5];
\foreach \x in {0,1,2,3,4,5,6,7,8,9,10,11,12,13,14,15,16,17,18,19,20,21,22,23,24,25,26,27,28,29,30}
\draw [fill, white]  (5+\x,35) circle  [radius=.5];
\foreach \x in {0,1,2,3,4,5,6,7,8,9,10,11,12,13,14,15,16,17,18,19,20,21,22,23,24,25,26,27,28,29,30}
\draw [fill, white]  (35,35-\x) circle  [radius=.5];
\foreach \x in {0,1,2,3,4,5,6,7,8,9,10,11,12,13,14,15,16,17,18,19,20,21,22,23,24,25,26,27,28,29,30}
\draw [fill, white]  (35-\x, 5) circle  [radius=.5];
\foreach \x in {0,1,2,3,4,5,6,7,8,9,10,11,12,13,14,15,16,17,18,19,20,21,22,23,24,25,26,27,28,29,30}
\draw [fill, white]  (5, 5+\x) circle  [radius=.5];

\end{tikzpicture}
\caption{The extended boundary $\partial^*U$ defined by dangling edges. Note that the central point has three dangling edges pointing toward it, indicating that three steps that a random walk could take at the time it's absorbed.}\label{fig-Bd}
\end{center}
\end{figure}

 It is useful to introduce  the {\em extended boundary}, $\partial^*U$ of $U.$  See Figure~\ref{fig-Bd}. To justify this definition, think of the cable graph, which is a continuous analog of $(\mathfrak X, \mathfrak E)$ where the edges from $\mathfrak E$ are replaced by unit segments.
 Now, when considering the domain $U$, keep all the edges between any two points in $U$ (that is the set $\mathfrak E_U=\mathfrak E\cap (U\times U)$) but keep also the dangling half-edges $\{x,y\}$, $x\in U$, $y\in \mathfrak X\setminus U$ each of which carry a edge weight $\mu_{xy}$. Each of these so called dangling half-edge defines a distinct boundary point in 
 ${\partial^*U =\{y^*_x=\{x,y\}: x\in U, y\in \mathfrak X\setminus U\}}$.  In some sense, this is the largest natural boundary we can associate to $U$ viewed as a domain in $(\mathfrak X,\mathfrak E)$.   By using this boundary we can record not only the exit point $y$ but also the point $x$ representing the position in $U$ from which the exit occurred.   Now, it is clear that the space $U^*=U\cup \partial^*U$ can be equipped with a metric $d_U$ that extends the inner metric defined on $U$ in a natural way.  Here we think of each dangling edge as a unit interval open on one end and we close that interval by adding the missing boundary point named $y^*_x=\{x,y\}$

Each extended boundary point is attached to exactly one vertex $x \in U$ and each original boundary point $y\in \partial U$ corresponds to a finite collection of extended boundary points $\{y^*_x: x\in \nu(y)\}$ parametrized by the set we call $\nu(y)$ (see Section \ref{sec-BC}).   

Here, we are mostly interested in the original boundary and the extended boundary serves as a useful tool in studying the harmonic measure and Poisson kernel for $U$.  Nevertheless, we should also mention the 
intrinsic boundary $\partial^\bullet U$ which is associated with the data  $(U,\mathfrak E_U, \pi |_U, \mu|_{\mathfrak E_U}, K_U)$. See Figure \ref{fig-Bb}. This data  suffices to tell which points in $U$ have at least one neighbor in $\mathfrak X\setminus U$,  because at such a point $x$, $\sum_yK_U(x,y)<1$. But it retains no information about the individual dangling edges and their respective weights. For any point $x\in U$ such that $\sum_yK_U(x,y)<1$, we introduce 
an abstract boundary point $x^\bullet$ which we may think of as a cemetery point attached to $x$.  Each of the abstract boundary points $x^\bullet$  is attached to $x$ by an abstract  boundary edge $\{x,x^\bullet\}$ so that the new graph $$(U\cup \partial^\bullet U, \mathfrak E^\bullet_U), \;\;\mathfrak E^\bullet_U=\mathfrak E_U\cup \{\{x,x^\bullet\}: x^\bullet \in \partial ^\bullet U\},$$ 
is a connected graph with subgraph $(U,\mathfrak E_U)$.   It is possible to construct the intrinsic boundary $\partial ^\bullet U$ from the extended boundary $\partial ^*U$. Namely, each point ${x^\bullet\in \partial ^\bullet U}$
corresponds to the collection $\{y _x^*=\{x,y\}: y\in \partial U\}$ of extended boundary points.  The edge $(x,x^\bullet)$ in $\mathfrak E_U^\bullet$ can be given the weight $\sum_{y: y^*_x=\{x,y\}} \mu_{xy}$.  
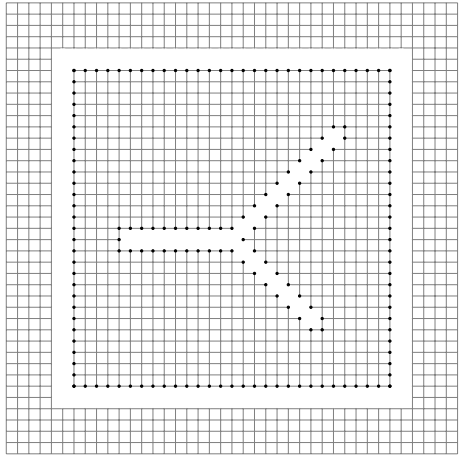
\begin{figure}[h] 
\begin{center}
\begin{tikzpicture}[scale=.15]
\draw [help lines ] (0,0) grid (40,40);
\foreach \x in {0,1,2,3,4,5,6,7,8}
\draw [fill, white]  (21+\x,20+\x) circle  [radius=.9];
\foreach \x in {0,1,2,3,4,5,6,7}
\draw [fill, white]  (20 +\x,19-\x) circle  [radius=.9];
\foreach \x in {0,1,2,3,4,5,6,7,8}
\draw [fill, white]  (19 -\x,19) circle  [radius=.9];
\foreach \x in {0,1,2,3,4,5,6,7,8,9,10,11,12,13,14,15,16,17,18,19,20,21,22,23,24,25,26,27,28,29,30}
\draw [fill, white]  (5+\x,35) circle  [radius=.9];
\foreach \x in {0,1,2,3,4,5,6,7,8,9,10,11,12,13,14,15,16,17,18,19,20,21,22,23,24,25,26,27,28,29,30}
\draw [fill, white]  (35,35-\x) circle  [radius=.9];
\foreach \x in {0,1,2,3,4,5,6,7,8,9,10,11,12,13,14,15,16,17,18,19,20,21,22,23,24,25,26,27,28,29,30}
\draw [fill, white]  (35-\x, 5) circle  [radius=.9];
\foreach \x in {0,1,2,3,4,5,6,7,8,9,10,11,12,13,14,15,16,17,18,19,20,21,22,23,24,25,26,27,28,29,30}
\draw [fill, white]  (5, 5+\x) circle  [radius=.9];


\draw [fill]  (20,20) circle [radius =.1];

\foreach \x in {0,1,2,3,4,5,6,7,8}
\draw [fill]  (22+\x,20+\x) circle  [radius=.1];
\foreach \x in {1,2,3,4,5,6,7,8,9}
\draw [fill]  (20+\x,20+\x) circle  [radius=.1];
\draw [fill]  (30,29) circle  [radius=.1];

\foreach \x in {0,1,2,3,4,5,6,7}
\draw [fill]  (21 +\x,19-\x) circle  [radius=.1];
\foreach \x in {0,1,2,3,4,5,6,7}
\draw [fill]  (20 +\x,18-\x) circle  [radius=.1];
\draw [fill]  (28,11) circle  [radius=.1];

\foreach \x in {0,1,2,3,4,5,6,7,8,9}
\draw [fill]  (19 -\x,20) circle  [radius=.1];
\foreach \x in {0,1,2,3,4,5,6,7,8,9}
\draw [fill]  (19 -\x,18) circle  [radius=.1];
\draw [fill]  (10,19) circle  [radius=.1];

\foreach \x in {1,2,3,4,5,6,7,8,9,10,11,12,13,14,15,16,17,18,19,20,21,22,23,24,25,26,27,28,29}
\draw [fill]  (5+\x,34) circle  [radius=.1];
\foreach \x in {1,2,3,4,5,6,7,8,9,10,11,12,13,14,15,16,17,18,19,20,21,22,23,24,25,26,27,28,29}
\draw [fill]  (34,35-\x) circle  [radius=.1];
\foreach \x in {1,2,3,4,5,6,7,8,9,10,11,12,13,14,15,16,17,18,19,20,21,22,23,24,25,26,27,28,29}
\draw [fill]  (35-\x, 6) circle  [radius=.1];
\foreach \x in {1,2,3,4,5,6,7,8,9,10,11,12,13,14,15,16,17,18,19,20,21,22,23,24,25,26,27,28,29}
\draw [fill]  (6, 5+\x) circle  [radius=.1];

\end{tikzpicture}
\caption{The intrinsic boundary $\partial^\bullet U$. The marked dots correspond to those points $x \in U$ to which an abstract boundary point $x^\bullet$ is attached. The attached abstract boundary points $x^\bullet$ are not shown explicitly. In this case, the central point is gone.}\label{fig-Bb}
\end{center}
\end{figure}  

Finally, we note that, in general,  there is no good direct relation between the natural boundary $\partial U$ and the intrinsic boundary $\partial^\bullet U$. Each of them can be seen as a different contraction of the extended boundary $\partial ^*U$.

 \subsection{Hitting probabilities for the extended boundary $\partial^*U$}\label{sec-hitting}
 We now explain some of the consequences of the theorems of Section~\ref{subsec-inner-uniform} on $P_U(t,x,y)$ within an inner-uniform domain $U$. The first thing to note is that Theorem \ref{theo-PUo} applies, uniformly, to all finite inner $(\alpha,A)$-uniform domains 
 in a given underlying structure $(\mathfrak X,\mathfrak E,\pi,\mu)$ that is a $2$-Harnack graph.  In order to get a more complete result which allows for varying a starting point and a fixed time horizon $t$ (Theorem \ref{theo-PUo} gives a two-sided estimate only for $P_U(o,y)$), we need to  estimate (see Theorem \ref{th-Pab2})
 $$P_U(t,x,y)=  \phi_0(x) \sum_{\ell=0}^{t-1} \beta_0^\ell \sum_{z\in \nu(y)} \phi_0(z)k^\ell_{\phi_0}(x,z)\mu_{zy}    .$$

 It is easier and more informative to first consider this question  in terms of the extended boundary $\partial^*U$ and that is how we now  proceed.   Any point $y^*_z=\{z,y\}\in \mathfrak E\cap U\times \partial U=\partial^*U$ can be reached only from the point $z=z_y$. We set
 $$p_U(t,x,y^*_z)= \sum_0^{t-1}k^\ell_U(x,z)\mu_{zy}/\pi(y)
 =\phi_0(x)\phi_0(z)\sum_0^{t-1} \beta_0^{\ell}k_{\phi_0}^{\ell}(x,z)\mu_{zy}/\pi(y)$$
 so that
 $$p_U(t,x,y)=\sum_{z\in \nu(y)} p_U(t,x,y^*_z).$$
 
 The quantity   $p_U(t,x,y^*_z)$  is equal to $0$ unless $t\ge 1+d_U(x,z)$   and we write
 $$ p_U(t,x,y^*_z)=  \phi_0(x) \phi_0(z)\sum_{\ell=d_U(x,y)-1}^{t-1} \beta_0^\ell  k^\ell_{\phi_0}(x,z)\mu_{zy} /\pi(y)    .$$

For clarity, we split  the problem into several cases (represented in the next four lemmas) even though these different cases can be captured by one final estimate, Theorem~\ref{thm-global}. The exponential term in the estimate on $P_U(t,x,y^*_z)$ depends on $t$ and $d_U(x,z)$. The lemmas distinguish between four different domains (depending on $t$ and $d_U(x,z)$ and with some non-empty intersections), and highlight the different behavior of the exponential term in the estimate for $P_U(t,x,y^*_z)$ within each of these domains. In Lemma~\ref{lem-Z1}, the exponential term plays an important role in the estimate; in Lemma~\ref{lem-Z2}, the exponential term is still there, but less important; and in Lemmas~\ref{lem-Z3} and~\ref{lem-Z4}, the exponential term disappears.

All four of the following lemmas (Lemmas~\ref{lem-Z1},~\ref{lem-Z2},~\ref{lem-Z3}, and~\ref{lem-Z4}) take place under the assumptions of Theorem~\ref{th-C}: $(\mathfrak X, \mathfrak E, \pi, \mu)$ is a 2-Harnack graph satisfying the ellipticity condition~\eqref{elliptic} and $U\subseteq \mathfrak X$ is a finite inner $(\alpha,A)$-uniform domain with Perron-Frobenius eigenfunction $\phi_0$. Observe that, by construction  and because of the ellipticity assumption,
$$P_e^{-1}\le \frac{\mu_{uv}}{\pi(v)}\le 1.$$

\begin{lem}[$1+d_U(x,z_y)\le t \leq (1+d_U(x,z))^{2-\epsilon}$]  \label{lem-Z1}
Under the assumptions of Theorem~\ref{th-C}, fix $\epsilon>0$ and assume that $x\in U,y^*_z\in \partial ^*U$ and $t$ are such that $1+d_U(x,z)\le t\le (1+d_U(x,z))^{2-\epsilon}$, $z=z_y$.  Then
$$\frac{e^{-C_1 d_U(x,z)^2/t}\mu_{zy}}{\pi(B(x,\sqrt{t}))} \le P_U(t,x,y_z^*) \le\frac{e^{-c_1 d_U(x,z)^2/t}\mu_{zy}}{\pi(B(x,\sqrt{t}))}.$$
\end{lem}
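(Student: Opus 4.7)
The plan is to start from the Doob-transform representation of $P_U(t,x,y_z^*)$, invoke the two-sided Gaussian estimates for the Doob-transformed chain, and then carry out the resulting sum under the regime constraint $1+d_U(x,z)\le t\le(1+d_U(x,z))^{2-\epsilon}$.

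First I would write
$$P_U(t,x,y_z^*) \;=\; \phi_0(x)\,\phi_0(z)\,\mu_{zy}\sum_{\ell=d_U(x,z)-1}^{t-1}\beta_0^\ell\,k^\ell_{\phi_0}(x,z),$$
which follows from Theorem~\ref{th-Pab2} and is the common starting point for all four Lemmas~\ref{lem-Z1}--\ref{lem-Z4}. Since $U$ is inner uniform, Theorem~\ref{th-H} tells me that $(K_{\phi_0},\pi_{\phi_0})$ is a $2$-Harnack chain on $(U,\mathfrak E_U)$ with structural constants depending only on the data, and Remark~\ref{rem-CS} shows that it also satisfies an ellipticity condition. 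Consequently Theorem~\ref{theo-PHItheta} (with $\theta=2$) provides matching two-sided Gaussian bounds on $k^\ell_{\phi_0}(x,z)$, which via Theorem~\ref{th-C} I rewrite as
$$k^\ell_{\phi_0}(x,z) \;\approx\; \frac{1}{\pi(B(x,\sqrt{\ell}))\,\phi_0(x_{\sqrt{\ell}})^2}\exp\!\Big(-c\,\frac{d_U(x,z)^2}{\ell}\Big),$$
with two choices of constant $c$ (one for the upper, one for the lower bound).

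Next I would handle the $\beta_0^\ell$ factor. Writing $d=d_U(x,z)$, the inner-uniformity forces $d\le A_1 R$ where $R$ is the inner-radius scale of $U$, and Remark~\ref{rem-CS} gives $T_U\gtrsim R^2$; combined with the hypothesis $t\le d^{2-\epsilon}\lesssim R^{2-\epsilon}$ this yields $\ell/T_U\lesssim R^{-\epsilon}$ uniformly for $\ell\le t$, so $\beta_0^\ell=(1-1/T_U)^\ell$ is comparable to $1$ throughout the range of summation and can be absorbed into the constants.

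The heart of the argument is then the estimation of the sum
$$S \;=\; \sum_{\ell=d-1}^{t-1}\frac{\exp(-c\,d^2/\ell)}{\pi(B(x,\sqrt{\ell}))\,\phi_0(x_{\sqrt{\ell}})^2}.$$
The Gaussian $e^{-cd^2/\ell}$ is strictly increasing in $\ell$ and in the regime $\ell\le d^{2-\epsilon}$ varies by many decades, whereas volume doubling and the comparison estimates for $\phi_0(x_r)$ in the corollary after Theorem~\ref{th-C} show that $\pi(B(x,\sqrt{\ell}))\phi_0(x_{\sqrt{\ell}})^2$ changes only by a polynomially bounded factor across $[d,t]$. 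A Laplace-type computation based on the derivative of $\ell\mapsto -cd^2/\ell$ identifies an effective window near $\ell=t$ of width of order $t^2/d^2$ on which the exponential stays comparable to $e^{-cd^2/t}$, so the sum is comparable to this width times the single summand at $\ell=t$. Reinserting the prefactor $\phi_0(x)\phi_0(z)\mu_{zy}$ and using the Carleson-type estimates of Theorem~\ref{th-C} to collapse the ratios $\phi_0(x)/\phi_0(x_{\sqrt{t}})$ and $\phi_0(z)/\phi_0(x_{\sqrt{t}})$, the final expression rearranges into $\mu_{zy}\,e^{-c d^2/t}/\pi(B(x,\sqrt{t}))$ as advertised.

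The main technical obstacle is precisely this bookkeeping of $\phi_0$-factors and the Laplace concentration: one has to argue that the effective width, the doubling factor for $\pi(B(x,\sqrt{\ell}))$ and the residual dependence on $\phi_0(x_{\sqrt{\ell}})$ recombine, for both the upper and the lower bound, into a clean expression with a single denominator $\pi(B(x,\sqrt{t}))$ and with constants $c_1<C_1$ matching the statement. I expect the corollary following Theorem~\ref{th-C}, together with volume doubling and the $2$-Harnack inequality, to be exactly what is needed to close this step.
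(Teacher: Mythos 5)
Your proposal follows essentially the same route as the paper: Doob-transform representation, two-sided Gaussian bounds on $k^\ell_{\phi_0}$ from Theorems~\ref{th-H} and~\ref{theo-PHItheta}, ignoring $\beta_0^\ell$ because $t\lesssim R^2\lesssim T_U$, summing in $\ell$, and absorbing the polynomially bounded $\phi_0$-ratios and doubling factors into the exponential by shrinking the constant (legitimate here precisely because $d^2/t\ge d^\epsilon$). Your Laplace window of width $t^2/d^2$ is a slightly finer count than what the paper actually uses---it simply bounds $\sum_{\ell}(t/\ell)^\kappa e^{-cd^2/\ell}\le C\,d^2\,e^{-c'd^2/t}$ and dumps the crude $d^2$ factor into the exponential---but both versions close the argument in this regime.

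The one substantive gap is the lower bound when $t=1+d_U(x,z)$. The heat-kernel lower bound from Theorem~\ref{theo-PHItheta} controls only $k^{\ell}_{\phi_0}+k^{\ell+1}_{\phi_0}$, so it is useless when the sum $\sum_{\ell=d_U(x,z)}^{t-1}$ consists of a single term (the chain may be periodic, so the single term $k^{d}_{\phi_0}(x,z)$ could a priori vanish---here it doesn't, because $d=d_U(x,z)$, but that needs to be said). Your Laplace window has width of order $t^2/d^2$, which can be $<2$ in this corner, so it cannot by itself guarantee two consecutive summands. The paper disposes of this case separately by direct inspection: since $d_U(x,z)=d$ there is a path of length $d$ inside $U$, and ellipticity gives $K_U^d(x,z)\ge P_e^{-d}$, which together with volume doubling matches the claimed lower bound up to adjusting the constant in the exponent. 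You should add this short direct argument (or at least flag it). A smaller point: the claim that $(K_{\phi_0},\pi_{\phi_0})$ satisfies ellipticity does not come from Remark~\ref{rem-CS}; it follows from the Carleson estimate of Theorem~\ref{th-C} combined with ellipticity of the ambient graph.
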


\begin{proof}  If $t=1$, we must have $x=z$ and it follows that $$P_U(1,x,y^*_z) =K(x,y)=\mu_{xy}/\pi(x)\approx 1\approx \pi(y)/\pi(x)$$ by the ellipticity assumption. In what follows, we assume that $t>1$. 

Recall that, the hypotheses imply that  $\beta_0\ge  1 -C/R^2$. Because 
$$t< d^2(x,y) \le (A_1 R)^2,$$ we can ignore the factors $\beta_0^{\ell}$ for $\ell\le t$ because they are roughly constant.  
It now suffices to bound
$$\sum_{\ell=d_U(x,z)}^{t-1} k^\ell_{\phi_0}(x,z).$$

For the upper bound, Theorem~\ref{theo-PHItheta} gives (with constants $c,C$ changing from line to line)
\begin{eqnarray*} \lefteqn{\phi_0(z) \sum_{\ell=d_U(x,z)}^{t-1} k^\ell_{\phi_0}(x,z) }&& \\
& \le & \frac{C\phi_0(z)}{ \pi (\phi_0^2\mathbf 1_{B_U(x,\sqrt{t})}) }
\sum_{\ell=d_U(x,z)}^{t-1}  \frac{\pi (\phi_0^2\mathbf 1_{B_U(x,\sqrt{t})})}{\pi(\phi_0^2\mathbf 1_{B_U(x,\sqrt{\ell})})}
e^{-cd_U(x,z)^2/\ell}\\
&\le & \frac{C\phi_0(z)}{\phi_0(x_{\sqrt{t}})^2\pi(B(x,\sqrt{t}))}  \sum_{\ell=d_U(x,z)}^{t-1}  (t/\ell)^\kappa 
e^{-cd_U(x,z)^2/\ell}\\
&\le &  \frac{C \phi_0(z)d_U(x,z)^2}{\phi_0(x_{\sqrt{t}})^2\pi(B(x,\sqrt{t}))}
e^{-cd_U(x,z)^2/t}\\
&\le &  \frac{C\phi_0(x)}{\phi_0(x_{\sqrt{t}})^2\pi(B(x,\sqrt{t}))}
e^{-cd_U(x,z)^2/t}\\
&\le & \frac{C}{\pi(B(x,\sqrt{t}))}
e^{-cd_U(x,z)^2/t}\\
\end{eqnarray*}

 The lower bound follows by similar computations and estimates. The only tricky part is that we only have a heat kernel lower bound on the sum $k^{\ell}_{\phi_0}+k^{\ell+1}_{\phi_0}$. This is perfectly suited for the desired result, except when  $t=1+d_U(x,z)$, in which case the sum $\sum_{\ell=d_U(x,z)}^{t-1}k_{\phi_0}^{\ell} (x,z)$ contains exactly one term. This case is handled by direct inspection and using the ellipticity hypothesis. 
 \end{proof}
 
\begin{lem}[$1+d_U(x,z_y)\le t \le A_2(1+ d_U(x,z_y))^2$]  \label{lem-Z2}
Under the assumptions of Theorem~\ref{th-C}, fix $A_2$  and assume that $x\in U,y^*_z\in \partial U$ and $t$ are such that $1+d_U(x,z)\le t \le A_2(1+ d_U(x,z))^2$, $z=z_y$.  Then
$$\frac{c_1t\phi_0(x)\phi_0(z)e^{-C_1d_U(x,z)^2/t} \mu_{zy}}{\phi_0(x_{\sqrt{t}})^2\pi(B(x,\sqrt{t}))}\le P_U(t,x,y^*_z)\le\frac{C_1t\phi_0(x) \phi_0(z) e^{-c_1d_U(x,z)^2/t}\mu_{zy}}{\phi_0(x_{\sqrt{t}})^2\pi(B(x,\sqrt{t}))} .$$
\end{lem}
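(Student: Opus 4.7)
The plan is to start from the representation
$$P_U(t,x,y_z^*)\;=\;\phi_0(x)\phi_0(z)\mu_{zy}\sum_{\ell=d_U(x,z)}^{t-1}\beta_0^{\ell}\,k^{\ell}_{\phi_0}(x,z)$$
recorded above Lemma~\ref{lem-Z1} and in Theorem~\ref{th-Pab2}, and to establish that in the present regime the sum is two-sided comparable to $t\,\exp(-c\,d_U(x,z)^2/t)/\pi_{\phi_0}(B_U(x,\sqrt t))$. After that, Theorem~\ref{th-C} identifies the denominator with $\phi_0(x_{\sqrt t})^2\,\pi(B(x,\sqrt t))$, and multiplication by $\phi_0(x)\phi_0(z)\mu_{zy}$ yields the lemma.

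The preliminaries go as follows. Since $x,z\in U\subset B_U(o,A_1R)$ we have $d_U(x,z)\le 2A_1R$, so the hypothesis forces $t\le A_2(1+2A_1R)^2\lesssim R^2$. Combined with $T_U=1/(1-\beta_0)\gtrsim R^2$ from~\eqref{beta0theta} (applied to the ball $B(o,a_1R)\subset U$, together with monotonicity of $\beta_0$ in the domain), this gives $\beta_0^{\ell}\approx 1$ uniformly in $\ell\in[0,t]$, so the $\beta_0^{\ell}$ factors are absorbed into constants. By Theorem~\ref{th-H}, $(K_{\phi_0},\pi_{\phi_0})$ is a $2$-Harnack chain on $(U,\mathfrak E_U)$, and it inherits an ellipticity condition from $(\mathfrak X,\mathfrak E,\pi,\mu)$ because $\beta_0\approx 1$. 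Theorem~\ref{theo-PHItheta} with $\theta=2$ then supplies the Gaussian upper bound
$$k^{\ell}_{\phi_0}(x,z)\;\le\;\frac{C\,\exp(-c\,d_U(x,z)^2/\ell)}{\pi_{\phi_0}(B_U(x,\sqrt{\ell}))}$$
together with a matching lower bound on $k^{\ell}_{\phi_0}(x,z)+k^{\ell+1}_{\phi_0}(x,z)$.

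For the lower bound I would restrict the sum to $\ell\in[t/2,t-2]$, pair consecutive terms, and apply the two-step heat-kernel lower bound. Volume doubling of $\pi_{\phi_0}$ gives $\pi_{\phi_0}(B_U(x,\sqrt{\ell}))\approx\pi_{\phi_0}(B_U(x,\sqrt t))$ on this range, while $d_U(x,z)^2/\ell\le 2\,d_U(x,z)^2/t$, so already this truncated sum is $\gtrsim t\exp(-C\,d_U(x,z)^2/t)/\pi_{\phi_0}(B_U(x,\sqrt t))$, and Theorem~\ref{th-C} delivers the lower half of the lemma.

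The upper bound is the main obstacle. For $\ell\in[t/2,t-1]$ the same comparisons give a contribution of the correct order $\lesssim t\exp(-c\,d_U(x,z)^2/t)/\pi_{\phi_0}(B_U(x,\sqrt t))$. For $\ell\in[d_U(x,z),t/2]$, reverse volume doubling gives $1/\pi_{\phi_0}(B_U(x,\sqrt{\ell}))\le C(t/\ell)^{\kappa/2}/\pi_{\phi_0}(B_U(x,\sqrt t))$ for some $\kappa>0$, and the plan is to estimate the resulting Gaussian-type sum via the substitution $u=d_U(x,z)^2/\ell$ by the integral $\int_{d_U(x,z)^2/t}^{d_U(x,z)}u^{\kappa/2-2}e^{-cu}\,du$, and to verify it is bounded by $Ct\exp(-c'\,d_U(x,z)^2/t)$ for some $0<c'<c$. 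The delicate point, in contrast to Lemma~\ref{lem-Z1}, is that here $d_U(x,z)^2/t$ is only bounded below by $1/A_2$ and not by a growing quantity, so polynomial prefactors in $d_U(x,z)$ cannot be cheaply absorbed into the exponent at the cost of a slightly smaller constant; one has to carry them explicitly, which is precisely why the factor $t\phi_0(x)\phi_0(z)/\phi_0(x_{\sqrt t})^2$ survives in the final estimate instead of collapsing as it does in Lemma~\ref{lem-Z1}. The degenerate edge case $t=1+d_U(x,z)$, in which only the single term $k^{d_U(x,z)}_{\phi_0}(x,z)$ appears, I would handle separately by direct inspection using the ellipticity condition, exactly as in the $t=1$ case at the start of the proof of Lemma~\ref{lem-Z1}.
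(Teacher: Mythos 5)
Your proposal follows essentially the same route as the paper: the Gaussian two-sided heat kernel bounds for the Doob-transformed chain from Theorem~\ref{theo-PHItheta}, the identification $\pi_{\phi_0}(B_U(x,r))\approx\phi_0(x_r)^2\pi(B(x,r))$ from Theorem~\ref{th-C}, volume doubling to replace $\pi_{\phi_0}(B_U(x,\sqrt\ell))$ by $(t/\ell)^{\kappa}\pi_{\phi_0}(B_U(x,\sqrt t))$, and the sum estimate $\sum_{\ell=d}^{t-1}(t/\ell)^\kappa e^{-cd^2/\ell}\lesssim te^{-c'd^2/t}$, with the lower bound obtained by pairing consecutive terms on $\ell\gtrsim t/2$ and the one-term edge case $t=1+d_U(x,z)$ handled by ellipticity. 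The only cosmetic difference is that you package the upper-bound sum via a change of variables into a Gamma-type integral (and drop the Jacobian prefactor $t^{\kappa/2}d^{2-\kappa}$ in passing, though it is easily restored), where the paper simply asserts the sum bound directly; the substance is the same.
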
 
 \begin{proof}  Write  (with constants $c,C$ changing from line to line)
\begin{eqnarray*} \lefteqn{\phi_0(z) \sum_{\ell=d_U(x,z)}^{t-1} k^\ell_{\phi_0}(x,z) }&& \\
& \le & \frac{C\phi_0(z)}{ \pi (\phi_0^2\mathbf 1_{B_U(x,\sqrt{t})}) }
\sum_{\ell=d_U(x,z)}^{t-1}  \frac{\pi (\phi_0^2\mathbf 1_{B_U(x,\sqrt{t})})}{\pi(\phi_0^2\mathbf 1_{B_U(x,\sqrt{\ell})})}
e^{-cd_U(x,z)^2/\ell}\\
&\le & \frac{C\phi_0(z)}{\phi_0(x_{\sqrt{t}})^2\pi(B(x,\sqrt{t}))}  \sum_{\ell=d_U(x,z)}^{t-1}  (t/\ell)^\kappa 
e^{-cd_U(x,z)^2/\ell}\\
&\le & \frac{C \phi_0(z)t}{\phi_0(x_{\sqrt{t}})^2\pi(B(x,\sqrt{t}))}
e^{-cd_U(x,z)^2/t}.
\end{eqnarray*}
 A matching lower bound follows similarly. 
  \end{proof}
 
 \begin{lem}[$1+d_U(x,z_y)^2\le t \le A_2R^2$]  \label{lem-Z3}
Under the assumptions of Theorem~\ref{th-C}, fix $A_2$  and assume that $x\in U,y^*_z\in \partial U$ and $t$ are such that $(1+d_U(x,z))^2\le t \le A_2 R^2$, $z=z_y$.  Then, setting $d=d_U(x,z)$, 
$$P_U(t,x,y^*_z)\approx  \frac{(1+d^2)\phi_0(x) \phi_0(z)\mu_{zy}}{\phi_0(x_{d})^2\pi(B(x,d))}\left\{1+ \frac{1}{1+d^2}\sum_{\ell=d^2}^{t} 
\frac{\phi_0(x_{d})^2\pi(B(x,d))}{\phi_0(x_{\sqrt{l}})^2\pi(B(x,\sqrt{\ell}))} \right\}.$$
\end{lem}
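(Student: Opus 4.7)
\textbf{Proof proposal for Lemma \ref{lem-Z3}.} The plan is to follow the template established by Lemmas \ref{lem-Z1} and \ref{lem-Z2}, splitting the relevant sum at the scale $\ell \asymp d^2$ (where $d = d_U(x,z)$) and treating each piece with the appropriate heat kernel bound. Starting from
\[
P_U(t,x,y^*_z) = \phi_0(x)\phi_0(z)\mu_{zy}\sum_{\ell=d-1}^{t-1}\beta_0^\ell\, k^\ell_{\phi_0}(x,z),
\]
I would first observe that since $t \le A_2 R^2$ and $1-\beta_0 \le C/R^2$ (by Remark \ref{rem-CS} applied to $U$), we have $\beta_0^\ell \approx 1$ uniformly for $\ell \le t$, so these factors may be dropped at the cost of universal constants.

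Next, I would split the remaining sum as $\sum_{\ell=d-1}^{d^2-1} + \sum_{\ell=d^2}^{t-1}$. The first piece falls into the regime of Lemma \ref{lem-Z2} (with $t$ there replaced by $d^2$): apply the two-sided heat kernel bounds from Theorem \ref{theo-PHItheta}, rewrite the volume $\pi_{\phi_0}(B_U(x,\sqrt{\ell}))$ via the Carleson-type estimate of Theorem \ref{th-C} as $\phi_0(x_{\sqrt{\ell}})^2 \pi(B(x,\sqrt{\ell}))$, and sum using doubling and the Harnack-derived comparability of $\phi_0(x_{\sqrt{\ell}})$ for $\ell$ in a dyadic range. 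Since $e^{-cd^2/\ell}$ is uniformly bounded between $0$ and $1$ for $\ell \in [d, d^2]$, the dominant contribution comes from $\ell \approx d^2$, yielding
\[
\sum_{\ell=d-1}^{d^2-1} k^\ell_{\phi_0}(x,z) \approx \frac{d^2}{\phi_0(x_d)^2 \pi(B(x,d))}.
\]

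For the second piece, the crucial simplification is that for $\ell \ge d^2$ we have $e^{-d^2/\ell} \asymp 1$, so the exponential factor disappears into the constants. Applying Theorem \ref{theo-PHItheta} directly yields
\[
\sum_{\ell=d^2}^{t-1} k^\ell_{\phi_0}(x,z) \approx \sum_{\ell=d^2}^{t} \frac{1}{\phi_0(x_{\sqrt{\ell}})^2 \pi(B(x,\sqrt{\ell}))}.
\]
Assembling the two pieces, multiplying through by $\phi_0(x)\phi_0(z)\mu_{zy}$, and factoring out the prefactor $(1+d^2)\phi_0(x)\phi_0(z)\mu_{zy}/[\phi_0(x_d)^2 \pi(B(x,d))]$ produces exactly the claimed expression.

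The main obstacle, as in Lemma \ref{lem-Z1}, is the lower bound: Theorem \ref{theo-PHItheta} only provides a pointwise lower bound on the \emph{pair} $k^\ell_{\phi_0} + k^{\ell+1}_{\phi_0}$, which is harmless when summed over a long range but requires a small bookkeeping adjustment (pairing consecutive $\ell$ values). A secondary technical point is verifying that the sum $\sum_{\ell=d^2}^{t} [\phi_0(x_{\sqrt{\ell}})^2 \pi(B(x,\sqrt{\ell}))]^{-1}$ is stable under the dyadic comparisons used above, which follows from the corollary to Theorem \ref{th-C} controlling the ratios $\phi_0(x_r)/\phi_0(x_s)$ and from volume doubling for $\pi$.
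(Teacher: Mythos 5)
Your proposal is correct and matches the paper's intended argument. The paper's proof of Lemma~\ref{lem-Z3} is the single sentence ``This is clear based on the proof of the previous estimate,'' and your write-up is a faithful expansion of that: drop the $\beta_0^\ell$ factors since $t \le A_2R^2 \lesssim T_U$, split the sum at $\ell \asymp d^2$, reuse the Lemma~\ref{lem-Z2} computation on $[d, d^2)$ to get the $\approx d^2/[\phi_0(x_d)^2\pi(B(x,d))]$ contribution, and on $[d^2, t]$ use that the Gaussian factor is $\asymp 1$ together with Theorem~\ref{theo-PHItheta} and the Carleson volume estimate $\pi_{\phi_0}(B_U(x,\sqrt{\ell})) \approx \phi_0(x_{\sqrt{\ell}})^2\pi(B(x,\sqrt{\ell}))$ from Theorem~\ref{th-C}; factoring out $(1+d^2)/[\phi_0(x_d)^2\pi(B(x,d))]$ gives exactly the stated two-sided bound, and you correctly flag the pairing $k^\ell_{\phi_0}+k^{\ell+1}_{\phi_0}$ needed for the lower bound.
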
 
\begin{proof} This is clear based on the proof of the previous estimate. \end{proof} 

\begin{defin}\label{defn-h-func}  Let $T_U$ be such that $\beta_0=1-1/T_U$. For $x\in U,y^*_z\in \partial U$ and $t\ge d(x,z)^2$, set $d=d_U(x,z)$, $V(x,d)=\pi(B(x,d))$ and 
$$H(t,x,z)= 1+ \left\{\begin{array}{cl}  0   &\mbox{ for  } 1+d\le t  <d^2\\
 \frac{\phi_0(x_{d})^2V(x,d)}{1+d^2}\sum_{l=d^2}^{t} 
\frac{1}{\phi_0(x_{\sqrt{l}})^2\pi(B(x,\sqrt{\ell}))} & \mbox{ for } d^2\le t\le R^2\\  
 H(R^2,x,z)+ \frac{\phi_0(x_{d})^2V(x,d)}{1+d^2}\frac{(\min\{t,T_U\}-R^2)_+}{\phi_0(o)^2\pi(U)}& \mbox{ for }   R^2\le t . \end{array}\right. $$
\end{defin}

 \begin{lem}[$1+d(x,z_y)^2\le  t $]  \label{lem-Z4}  Under the assumptions of Theorem~\ref{th-C}, let $T_U$ be such that $\beta_0=1-1/T_U$.   For all
  $x\in U,y^*_z\in \partial U$ and $t\ge d_U(x,z)^2$, $z=z_y$,  
$$P_U(t,x,y^*_z)\approx  \frac{(1+d_U(x,z)^2)\phi_0(x) \phi_0(z)\mu_{zy}}{\phi_0(x_{d_U(x,z)})^2\pi(B(x,d_U(x,z)))}H(t,x,z).$$
\end{lem}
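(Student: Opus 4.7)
The starting point is the Doob-transform formula from Theorem~\ref{th-Pab2},
$$P_U(t,x,y^*_z)=\phi_0(x)\phi_0(z)\mu_{zy}\sum_{\ell=d-1}^{t-1}\beta_0^\ell\, k^\ell_{\phi_0}(x,z),\qquad d:=d_U(x,z),$$
and my plan is to split the sum at $\ell=R^2$. The head $\sum_{\ell=d-1}^{R^2-1}\beta_0^\ell k^\ell_{\phi_0}(x,z)$ is handled by the estimate already proved in Lemma~\ref{lem-Z3}: after factoring out the common prefactor $\frac{(1+d^2)\phi_0(x)\phi_0(z)\mu_{zy}}{\phi_0(x_d)^2\pi(B(x,d))}$, it reproduces exactly $H(R^2,x,z)$. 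So the real work is the two-sided estimation of the tail $T(t):=\sum_{\ell=R^2}^{t-1}\beta_0^\ell k^\ell_{\phi_0}(x,z)$.

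For $T(t)$ I would rely on three ingredients. First, by Lemma~\ref{lem-inner-point} the $d_U$-diameter of $U$ is at most $A_1R$, so for $\ell\ge cR^2$ the ball $B_U(x,\sqrt{\ell})$ covers $U$; Theorem~\ref{theo-PHItheta} applied to the $2$-Harnack chain $(K_{\phi_0},\pi_{\phi_0})$ (Harnack by Theorem~\ref{th-H}) then gives $k^\ell_{\phi_0}(x,z)+k^{\ell+1}_{\phi_0}(x,z)\asymp 1/\pi_{\phi_0}(U)=1$, with the Gaussian factor $\exp(-cd^2/\ell)$ of unit order because $d\le A_1R$ and $\ell\ge cR^2$. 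Second, $\beta_0=1-1/T_U$ and~\eqref{beta0theta} give $T_U\gtrsim R^2$, so $\beta_0^{R^2}\asymp 1$. Third, with these substitutions the tail reduces to a pure geometric sum,
$$T(t)\asymp\sum_{\ell=R^2}^{t-1}\beta_0^\ell=T_U\bigl(\beta_0^{R^2}-\beta_0^t\bigr)\asymp T_U\bigl(1-e^{-(t-R^2)/T_U}\bigr)\asymp\bigl(\min\{t,T_U\}-R^2\bigr)_+,$$
using the elementary estimate $T_U(1-e^{-s/T_U})\asymp\min\{s,T_U\}$ for $s\ge 0$. Multiplying by $\phi_0(x)\phi_0(z)\mu_{zy}$, rewriting under the common prefactor, and invoking $\phi_0(o)^2\pi(U)\asymp 1$ from the proof of Theorem~\ref{theo-PUo}, the tail's contribution becomes exactly the third-branch correction appearing in the definition of $H(t,x,z)$. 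Adding head and tail yields the claim.

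The main technical obstacle, in my view, is the parity issue in the heat-kernel lower bound~\eqref{ktlow}, which controls $k^\ell_{\phi_0}+k^{\ell+1}_{\phi_0}$ rather than $k^\ell_{\phi_0}$ individually. The standard workaround is to pair consecutive terms: since $\beta_0^\ell\asymp\beta_0^{\ell+1}$ in the ranges of interest,
$$\beta_0^\ell k^\ell_{\phi_0}(x,z)+\beta_0^{\ell+1}k^{\ell+1}_{\phi_0}(x,z)\gtrsim\beta_0^{\ell+1}\bigl(k^\ell_{\phi_0}(x,z)+k^{\ell+1}_{\phi_0}(x,z)\bigr)\gtrsim\beta_0^{\ell+1},$$
after which summation over disjoint pairs and absorption of the at-most-one unpaired endpoint recovers the desired lower bound on $T(t)$. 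Beyond this, the argument is a careful bookkeeping exercise reconciling the prefactors at the interface $\ell=R^2$; both the head estimate from Lemma~\ref{lem-Z3} evaluated at $t=R^2$ and the first term of the tail estimate are comparable to $\beta_0^{R^2}/(\phi_0(x_{\sqrt{R^2}})^2\pi(B(x,R)))\asymp 1/(\phi_0(o)^2\pi(U))$, so no boundary discontinuity arises.
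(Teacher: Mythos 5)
The paper does not give an explicit proof of Lemma~\ref{lem-Z4}, so your write-up fills a genuine gap. Your decomposition at $\ell = R^2$, treating the head via Lemma~\ref{lem-Z3} and the tail via mixing of $(K_{\phi_0},\pi_{\phi_0})$ plus a geometric sum, with consecutive-term pairing to handle the parity issue in the lower bound~\eqref{ktlow}, is the natural and intended route given the structure of the section. The main ingredients are used correctly: $\pi_{\phi_0}(U)=\pi(\phi_0^2)=1$, the fact that $B_U(x,\sqrt\ell)$ is a constant multiple of $U$ for $\ell\gtrsim R^2$ so that $k^\ell_{\phi_0}(x,z)+k^{\ell+1}_{\phi_0}(x,z)\asymp 1$, and $\phi_0(o)^2\pi(U)\asymp 1$ from the proof of Theorem~\ref{theo-PUo}.

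One step is imprecise and should not be stated as an identity. From the geometric sum you actually get $T(t)\asymp\min\{t-R^2,T_U\}$, whereas you claim $T(t)\asymp(\min\{t,T_U\}-R^2)_+$. These two expressions are \emph{not} comparable with uniform constants: for $t\ge T_U+R^2$ the first equals $T_U$ while the second equals $(T_U-R^2)_+$, and Remark~\ref{rem-CS} only guarantees $T_U\gtrsim R^2$ (with a constant that can be less than $1$), not $T_U\ge(1+\delta)R^2$, so the ratio can be arbitrarily large or the second factor can even vanish. The tail therefore does not become ``exactly'' the third-branch correction. The repair is a domination argument rather than an identification: both $\min\{t-R^2,T_U\}$ and $(\min\{t,T_U\}-R^2)_+$ are $\lesssim T_U\asymp R^2$, while the sum inside $H(R^2,x,z)$ already contains $\gtrsim R^2$ terms each $\gtrsim 1/(\phi_0(o)^2\pi(U))$, so $\text{prefactor}\cdot H(R^2,x,z)$ dominates both corrections. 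Consequently $\text{prefactor}\cdot H(t,x,z)\approx\text{prefactor}\cdot H(R^2,x,z)\approx P_U(R^2,x,y_z^*)\approx P_U(t,x,y_z^*)$ for all $t\ge R^2$. With that one-line replacement your argument is complete.
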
 

\begin{rem}  In many cases (e.g., for any finite inner-uniform domain $U$ in $\mathbb Z^n$, $n\neq 2$, and many particular examples in $\mathbb Z^2$),   we automatically have
$$\forall\, t\in [2d^2,R^2] ,\;\; \sum_{\ell=d^2}^{t} 
\frac{1}{\phi_0(x_{\sqrt{l}})^2\pi(B(x,\sqrt{\ell}))} \approx    \frac{1+d^2}{\phi_0(x_{d})^2\pi(B(x,d))}, $$
where $d=d_U(x,z)$. In such cases,  the function $H(x,t)$ satisfies
$$H(t,x,z)\approx  \left\{\begin{array}{cl}  1 & \mbox{ for }  d^2\le t\le R^2\\
1+ \frac{\phi_0(x_{d})^2V(x,d)}{1+d^2}\frac{(\min\{t,T_U\}-R^2)_+}{\phi_0(o)^2\pi(U)}& \mbox{ for }   R^2\le t . \end{array}\right. $$
and Lemma \ref{lem-Z3} simplifies to give
$$P_U(t,x,y^*_z)\approx  \frac{(1+d_U(x,z)^2)\phi_0(x) \phi_0(z)\mu_{zy}}{\phi_0(x_{d_U(x,z)})^2\pi(B(x,d_U(x,z)))} $$ 
for  $d_U(x,z)^2\le t\le A_2R^2$. \end{rem}

The following theorem is proved by inspection of the different cases described above.  We also use the fact that, for any $\kappa\in \mathbb R$ and  $\omega>0$ there exists $0<c\le C<+\infty$ such that, for all $ 0<t<d^2$,
$$ce^{-2\omega d^2/t}\le \left(\frac{d^2}{t}\right)^{\kappa } e^{- \omega d^2/t}\le  C e^{-(\omega/2) d^2/t}.$$
\begin{theo}[Global estimate of $P_U(t,x,y_z^*)$]\label{thm-global}   
Under the assumptions of Theorem~\ref{th-C}, for all $x\in U,y^*_z\in \partial^*U$, $z=z_y\in U$, $d=d_U(x,z)$ and $t\ge 1+d$, with $H(t,x,z)$ from Definition~\ref{defn-h-func}, the hitting probability of $y_z^*$ before time $t$ for the chain started at $x$ is bounded above and below by
expressions of the form
$$c_1  \frac{(1+d^2)\phi_0(x) \phi_0(z)\mu_{zy}}{\phi_0(x_{d})^2\pi(B(x,d))} H(t,x,z) e^{-c_2 d^2/t}$$
where the constants $c_1,c_2$ differ in the lower bound and in the upper bound. These constants depend only on  the Harnack constant of $(\mathfrak X,\mathfrak E,\pi,\mu)$, the ellipticity constant $P_e$ and the inner-uniformity constants $\alpha, A$ of $U$ .\end{theo}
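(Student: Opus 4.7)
My plan is to verify the global estimate by checking that it matches each of Lemmas~\ref{lem-Z1}, \ref{lem-Z2}, \ref{lem-Z3}, and~\ref{lem-Z4} within their respective ranges of $t$ and $d=d_U(x,z)$, and then observe that these ranges cover all of $t\ge 1+d$. I split $t\in[1+d,\infty)$ into three (overlapping) regimes: (i) $1+d\le t\le A_2(1+d)^2$, handled by Lemmas~\ref{lem-Z1}--\ref{lem-Z2}; (ii) $(1+d)^2\le t\le A_2 R^2$, handled by Lemma~\ref{lem-Z3}; and (iii) $t\ge R^2$, handled by Lemma~\ref{lem-Z4}. In regimes (ii) and (iii), the function $H(t,x,z)$ is defined precisely to reproduce the bound given by Lemma~\ref{lem-Z3} (resp.\ Lemma~\ref{lem-Z4}), and since $d^2/t$ is bounded in these ranges we have $e^{-c_2 d^2/t}\approx 1$, so both the upper and lower bound follow directly from these two lemmas.

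The substantive work is in regime (i). There $H(t,x,z)=1$ by Definition~\ref{defn-h-func}, so the claimed bound reduces to
$$c_1\,\frac{(1+d^2)\phi_0(x)\phi_0(z)\mu_{zy}}{\phi_0(x_d)^2\pi(B(x,d))}\,e^{-c_2 d^2/t}.$$
For Lemma~\ref{lem-Z2} (the case $1+d\le t\le A_2(1+d)^2$), the ratio between the Lemma~\ref{lem-Z2} prefactor and this expression equals
$$\frac{t}{1+d^2}\cdot\frac{\pi(B(x,\sqrt t))}{\pi(B(x,d))}\cdot\frac{\phi_0(x_d)^2}{\phi_0(x_{\sqrt t})^2},$$
and by the doubling property of $\pi$ together with the Carleson-type Theorem~\ref{th-C} and its subsequent corollary, each of these three ratios is bounded above and below by a polynomial in $d/\sqrt t$. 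The algebraic inequality
$$c\,e^{-2\omega d^2/t}\le \left(\frac{d^2}{t}\right)^{\kappa}e^{-\omega d^2/t}\le C\,e^{-(\omega/2)d^2/t},$$
stated just before the theorem, then absorbs any such polynomial factor into the exponential at the cost of modifying the constant in the exponent, so Lemma~\ref{lem-Z2} is consistent with the unified bound.

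For Lemma~\ref{lem-Z1}, which holds on the narrower sub-range $t\le (1+d)^{2-\epsilon}$, the lemma's prefactor $\mu_{zy}/\pi(B(x,\sqrt t))$ carries no $\phi_0$ factors, so comparing with the unified form requires controlling an additional ratio $(1+d^2)\phi_0(x)\phi_0(z)\phi_0(x_d)^{-2}\cdot\pi(B(x,\sqrt t))/\pi(B(x,d))$. By the same doubling and Carleson estimates applied to $\phi_0(x)/\phi_0(x_d)$ and $\phi_0(z)/\phi_0(x_d)$, this ratio is again polynomial in $d$ and $d/\sqrt t$; since $d^2/t\ge d^\epsilon$ on this range, the algebraic inequality above again absorbs all such polynomial factors into the exponential. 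I expect this bookkeeping to be the main obstacle: one must verify that the various polynomial powers produced by doubling and by the Carleson estimates of Section~\ref{subsec-inner-uniform} are of the form $(d/\sqrt t)^\kappa$ (with a uniform $\kappa$) so that a single application of the algebraic inequality suffices on both the upper and the lower side.

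Finally, I would record that regimes (i)--(iii) overlap near $t\approx (1+d)^2$ and $t\approx R^2$, and that at each seam the relevant lemmas (together with the definition of $H$) produce matching estimates up to the universal constants. Patching the three regimes then yields a single two-sided bound of the claimed form valid for all $t\ge 1+d$, with constants depending only on $C_H$, $P_e$, and $(\alpha,A)$ through the dependencies already recorded in Theorems~\ref{th-H} and~\ref{th-C}.
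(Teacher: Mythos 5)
Your proposal is correct and follows essentially the same route the paper intends: the paper's own ``proof'' is the one-line remark that the theorem ``is proved by inspection of the different cases described above'' together with the displayed algebraic inequality $c\,e^{-2\omega d^2/t}\le (d^2/t)^{\kappa}e^{-\omega d^2/t}\le C\,e^{-(\omega/2)d^2/t}$, and you carry out exactly that inspection across Lemmas~\ref{lem-Z1}--\ref{lem-Z4}, using doubling, the Carleson-type bounds, and the algebraic inequality to absorb the polynomial-in-$d$ and polynomial-in-$d/\sqrt t$ discrepancies into the exponential when $t\lesssim d^2$. (One small transcription slip: in your Lemma~\ref{lem-Z2} ratio the factor $\pi(B(x,\sqrt t))/\pi(B(x,d))$ should be inverted, but this does not affect the argument since in that range it is bounded above and below by powers of $\sqrt t/d$ anyway.)
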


We conclude this section with two more statements. The first concerns the central point $o$ and gives a two-sided estimate for $P_U(t,o,y_z^*)$ that holds for all $t\ge d_U(o,y^*_z)$ and all extended boundary points $y^*_z$.
The second gives a two-sided estimate for the harmonic measure   $P_U(x,y^*_z)$  that holds for all  $x\in U$, $y_z^*\in \partial^*_U$.  

\begin{theo}[Hitting probabilities from the central point $o$]  Fix  $\alpha\in (0,1]$ and $A\ge 1$.
Assume that $(\mathfrak X,\mathfrak E,\pi,\mu)$ is a $2$-Harnack graph satisfying the ellipticity condition {\em (\ref{elliptic})} and that $U$ is a finite inner $(\alpha,A)$-uniform domain with Perron-Frobenius  eigenvalue and  eigenfunction
$\beta_0,\phi_0$ with $\pi(\phi_0^2)=1$ and set $T_U=(1-\beta_0)^{-1}$.
There are constants $c,C\in (0,\infty)$ depending only on the Harnack constant of $(\mathfrak X,\mathfrak E,\pi,\mu)$, the ellipticity constant $P_e$, and the inner-uniformity constants $\alpha, A$ of $U$ such that,  for all $t>0$ and $y_z^*\in \partial^*U$,
$$c \min\{t,T_U\}\frac{ \mu_{zy}  \phi_0(z) }{\sqrt{\pi(U)}}  e^{-CR^2/t}\le P_U(t,o,y^*_z) \le  C \min\{t,T_U\}\frac{ \mu_{zy}  \phi_0(z) }{\sqrt{\pi(U)}}  e^{-cR^2/t}. $$
\end{theo}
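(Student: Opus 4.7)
The plan is to specialize Theorem~\ref{thm-global} to $x=o$ and exploit the special geometric features of the central point. For any $y \in \partial U$ and $z = z_y \in \nu(y)$, first observe that $d := d_U(o,z)$ satisfies $R/2 - 1 \le d \le A_1 R$, since $d(o,\mathfrak X \setminus U) = R/2$ and $U \subset B_U(o, A_1 R)$; thus $d \approx R$. By the convention fixed after Lemma~\ref{lem-inner-point}, one may take $o_d = o$. Exactly as in the opening paragraph of the proof of Theorem~\ref{theo-PUo}, combined with the Carleson estimate (Theorem~\ref{th-C}) and doubling of $\pi$, one obtains $\phi_0(o) \approx \phi_0(o_d) \approx \pi(U)^{-1/2}$ and $\pi(B(o,d)) \approx \pi(U)$. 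Inserting these into Theorem~\ref{thm-global} collapses the prefactor and yields
$$P_U(t,o,y_z^*) \approx \frac{R^2\, \phi_0(z)\, \mu_{zy}}{\sqrt{\pi(U)}}\, H(t,o,z)\, e^{-c R^2/t}.$$
The heart of the proof then reduces to showing $R^2 H(t,o,z) \approx \min\{t, T_U\}$, up to polynomial errors that can be absorbed into the exponential.

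Next I would analyze $H(t,o,z)$ according to the three regimes of Definition~\ref{defn-h-func}, keeping in mind that Remark~\ref{rem-CS} gives $T_U \gtrsim R^2$. In the regime $1+d \le t < d^2$ one has $H \equiv 1$, so $R^2 H = R^2$; since $t \le R^2$ and $\min\{t,T_U\} = t$, the discrepancy is the polynomial factor $R^2/t \ge 1$, which is absorbed via the inequality $(R^2/t)^\kappa e^{-\omega R^2/t} \approx e^{-\omega' R^2/t}$ recalled just before Theorem~\ref{thm-global}. In the narrow regime $d^2 \le t \le R^2$ (existing only because of the hidden constants in $d \approx R$), the defining sum of $H$ has only $O(1)$ summands, each of order $1$ using $\phi_0(o_{\sqrt{\ell}})^2 \pi(B(o,\sqrt{\ell})) \approx 1$; hence $H \approx 1$ again and the same absorption applies. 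Finally, for $t \ge R^2$, the recursive piece of Definition~\ref{defn-h-func} together with $\phi_0(o)^2 \pi(U) \approx 1$ gives directly $H(t,o,z) \approx 1 + (\min\{t,T_U\}-R^2)_+/R^2$, so $R^2 H(t,o,z) \approx \min\{t,T_U\}$; here $e^{-cR^2/t} \approx 1$ and no adjustment is needed.

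The main obstacle is making the absorption argument work simultaneously for the upper and lower bounds in the small-time range $d \lesssim t \lesssim R^2$. The lower bound is free: $R^2 \ge t$ gives $R^2 e^{-CR^2/t} \ge t\, e^{-CR^2/t}$ directly. For the upper bound, applying the two-sided inequality preceding Theorem~\ref{thm-global} with $\omega = c$ and $\omega' = c/2$ gives $R^2 e^{-cR^2/t} \le C\, t\, e^{-(c/2) R^2/t}$, at the price of weakening the exponential constant. The trivial range $t < 1+d$, where $P_U(t,o,y_z^*) = 0$ while the asserted lower bound is exponentially small in $R$, is handled by choosing the lower constants $c$ small and $C$ large enough that the bound is vacuous at such scales. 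Once the four sub-cases are assembled, the theorem follows for all $t>0$ and $y_z^* \in \partial^*U$.
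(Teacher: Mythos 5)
Your overall strategy—specialize Theorem~\ref{thm-global} to $x=o$, use $d_U(o,z)\approx R$ and $\phi_0(o)\approx\phi_0(o_d)\approx\pi(U)^{-1/2}$ and $\pi(B(o,d))\approx\pi(U)$ to collapse the prefactor to $R^2\phi_0(z)\mu_{zy}/\sqrt{\pi(U)}$, then show $R^2 H(t,o,z)\approx\min\{t,T_U\}$ up to factors absorbable in the Gaussian—is the right argument and matches the structure the paper sets up (the paper states the theorem without proof, as a direct consequence of the machinery). Two points in your write-up need repair.

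First, your justification for $H(t,o,z)\approx 1$ in the range $d^2\le t\le R^2$ is not correct as stated. You assert the sum in Definition~\ref{defn-h-func} has ``only $O(1)$ summands,'' but from $R/2-1\le d\le A_1R$ one only gets $d^2\ge R^2/4$ (roughly), so the number of summands $t-d^2$ can be of order $R^2$, not $O(1)$. The conclusion is nonetheless right: each summand is $\approx 1$ (since $\phi_0(o_{\sqrt\ell})^2\pi(B(o,\sqrt\ell))\approx\phi_0(o)^2\pi(U)\approx 1$ when $\sqrt\ell\approx R$), there are at most $\approx R^2$ of them, and the prefactor $\phi_0(o_d)^2 V(o,d)/(1+d^2)\approx 1/R^2$ cancels that count. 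So $H\approx 1$ holds, but for a different reason than the one you gave; the $1/(1+d^2)$ factor doing the work should be spelled out.

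Second, your treatment of $t<1+d_U(o,z)$ cannot be repaired by ``choosing $c$ small and $C$ large.'' In that range $P_U(t,o,y_z^*)=0$ exactly, while the asserted lower bound $c\min\{t,T_U\}\mu_{zy}\phi_0(z)\pi(U)^{-1/2}e^{-CR^2/t}$ is strictly positive for any admissible $c,C$; no choice of constants makes a positive number $\le 0$. This is an imprecision in the theorem statement itself (it should implicitly require $t\ge 1+d_U(o,z)$, i.e.\ $t$ for which the hitting probability is nonzero), and you should state that restriction explicitly rather than claim the bound becomes vacuous. With these two corrections, the rest of your argument—the regime-by-regime analysis of $H$, the absorption of polynomial factors $(R^2/t)^\kappa$ into the exponential for $t\lesssim R^2$, and the identity $R^2+(\min\{t,T_U\}-R^2)_+=\max\{R^2,\min\{t,T_U\}\}\approx\min\{t,T_U\}$ for $t\ge R^2$—is sound.
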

\begin{theo}[Harmonic measure from an arbitrary starting point] \label{theo-HMarb}
 Fix  $\alpha\in (0,1]$ and $A\ge 1$.
Assume that $(\mathfrak X,\mathfrak E,\pi,\mu)$ is a $2$-Harnack graph satisfying the ellipticity condition {\em (\ref{elliptic})} and that $U$ is a finite inner $(\alpha,A)$-uniform domain with Perron-Frobenius  eigenvalue and  eigenfunction
$\beta_0,\phi_0$ with $\pi(\phi_0^2)=1$ and set $T_U=(1-\beta_0)^{-1}$.
There are constants $c,C\in (0,\infty)$ depending only on the Harnack constant of $(\mathfrak X,\mathfrak E,\pi,\mu)$, the ellipticity constant $P_e$ and the inner-uniformity constants $\alpha, A$ of $U$ such that,  for all $x\in U$ and  $y_z^*\in \partial^*U$,
$$P_U(x,y^*_z)  \approx \phi_0(x)\phi_0(z)\mu_{zy}\left\{ T_U+\sum_{d_U(x,z)^2}^{R^2} \frac{1}{\phi_0(x_{\sqrt{\ell}})^2 \pi(B(x,\sqrt{\ell}))} \right\}     .        $$
\end{theo}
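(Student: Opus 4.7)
The natural approach is to take $t\to\infty$ in Theorem~\ref{thm-global}, equivalently to sum directly in the Doob-transformed expression
$$P_U(x, y_z^*) = \phi_0(x)\phi_0(z)\mu_{zy}\sum_{\ell \ge d_U(x,z)} \beta_0^{\ell}\, k_{\phi_0}^\ell(x, z)$$
from Theorem~\ref{th-Pab2}. The essential inputs are all already available: Theorem~\ref{th-H} makes $(K_{\phi_0},\pi_{\phi_0})$ a $2$-Harnack chain on $(U,\mathfrak E_U)$; Theorem~\ref{theo-PHItheta} then supplies two-sided Gaussian estimates for $k_{\phi_0}^\ell$; Theorem~\ref{th-C} identifies $\pi_{\phi_0}(B_U(x,r))\approx \phi_0(x_r)^2\pi(B(x,r))$; and the normalization $\pi(\phi_0^2)=1$ forces $k_{\phi_0}^\ell(x,z)\to 1$ as $\ell\to\infty$, since $\pi_{\phi_0}$ is then a probability measure.

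Writing $d = d_U(x,z)$, I split the time sum into four consecutive ranges, each contributing one recognizable piece. On the short range $\ell \in [d, d^2)$ the Gaussian factor $e^{-cd^2/\ell}$ concentrates mass near $\ell \sim d^2$, producing a total $\approx (1+d^2)/(\phi_0(x_d)^2\pi(B(x,d)))$ (the $t=d^2$ evaluation of Lemma~\ref{lem-Z2}). On the diffusive range $\ell \in [d^2, R^2]$ one has $\beta_0^\ell \approx 1$ (since $T_U \gtrsim R^2$ by Remark~\ref{rem-CS} applied to the ball $B(o,a_1R)\subset U$) and $k_{\phi_0}^\ell(x,z) \approx 1/\pi_{\phi_0}(B_U(x,\sqrt\ell))$, producing $\sum_{\ell=d^2}^{R^2} 1/(\phi_0(x_{\sqrt\ell})^2\pi(B(x,\sqrt\ell)))$. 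On the ergodic range $\ell \in [R^2, T_U]$ mixing has occurred, giving $k_{\phi_0}^\ell \approx 1$, $\beta_0^\ell \approx 1$, and a contribution $\approx T_U - R^2$. Finally, the tail $\ell > T_U$ contributes $\approx T_U$ from the geometric decay of $\beta_0^\ell$ at rate $1/T_U$ together with $k_{\phi_0}^\ell \approx 1$. The short-range piece is comparable to the block $\ell \in [d^2, 2d^2]$ of the diffusive sum, by doubling and the Carleson-type control $\phi_0(x_{\sqrt\ell}) \approx \phi_0(x_d)$ on that block, so it is absorbed into it; the ergodic and tail pieces combine to $\approx T_U$.

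The harder parts are bookkeeping rather than new ideas. Matching the combined ergodic-plus-tail contribution to the clean $T_U$ in the statement requires the identity $\phi_0(o)^2\pi(U)\approx 1$ (from the proof of Theorem~\ref{theo-PUo}) to calibrate the ``mixing constant'' $k_{\phi_0}^\ell\approx 1$, and uses that the diffusive summand evaluated near $\ell = R^2$ is itself already of size $\approx 1$, so that the cutoff at $R^2$ between the diffusive and ergodic pieces is seamless. A technical nuisance is that $(K_{\phi_0},\pi_{\phi_0})$ need not be aperiodic, so Theorem~\ref{theo-PHItheta} only provides lower bounds on $k_{\phi_0}^\ell + k_{\phi_0}^{\ell+1}$; this is handled by pairing consecutive terms in the time sum, exactly as in Lemmas~\ref{lem-Z1}--\ref{lem-Z4}. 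The alternative route starting from Theorem~\ref{thm-global} reaches the same simplifications through the limit of $H(t,x,z)$ in Definition~\ref{defn-h-func}, whose three surviving summands (a prefactor of size $(1+d^2)/(\phi_0(x_d)^2\pi(B(x,d)))$, the diffusive sum, and $(T_U-R^2)/(\phi_0(o)^2\pi(U))$) must be combined in precisely the same way.
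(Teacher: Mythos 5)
Your proof is correct and matches the paper's intended route: the paper states Theorem~\ref{theo-HMarb} without a separate written proof precisely because it is the $t\to\infty$ reading of Theorem~\ref{thm-global} (equivalently, direct summation in Theorem~\ref{th-Pab2}), using Lemmas~\ref{lem-Z1}--\ref{lem-Z4} and Definition~\ref{defn-h-func} exactly as you describe, with the short-range piece absorbed into the first block of the diffusive sum (or into $T_U$ when $d\gtrsim R$) and the ergodic and tail ranges calibrated by $\phi_0(o)^2\pi(U)\approx 1$.
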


\begin{lem} Assume that the function $V: (0,N]\ra (0,\infty)$  satisfies $$V(2r)\le CV(r),\;\;  V(s)\le CV(r)$$ and
\begin{equation}\label{>2}
\frac{V(r)}{V(s)}\ge c\left(\frac{r}{s}\right)^{2+\epsilon} \end{equation}
for all $1\le s<r\le N$. Then we have
$$\forall\,d\in (1,N/2),\;\; \sum_{\ell=d^2}^{N^2}\frac{1}{V(\sqrt{\ell})} \approx  \frac{1+d^2}{V(d)}.$$
\end{lem}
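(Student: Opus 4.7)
My plan is to estimate the sum by a dyadic decomposition in $\sqrt{\ell}$. For each integer $k \ge 0$ with $2^k d \le N$, define the block
$$I_k = \{\ell \in \mathbb{Z} : 4^k d^2 \le \ell < 4^{k+1}d^2\}, \qquad |I_k| \asymp 4^k d^2.$$
On $I_k$ we have $\sqrt{\ell} \in [2^k d, 2^{k+1}d)$, and the doubling hypothesis $V(2r)\le CV(r)$ gives $V(\sqrt{\ell}) \approx V(2^k d)$ uniformly on $I_k$. Consequently
$$\sum_{\ell\in I_k}\frac{1}{V(\sqrt{\ell})} \approx \frac{4^k d^2}{V(2^k d)}.$$
The sum of interest is then (up to one boundary block truncated at $N^2$, which only reduces the sum) comparable to $\sum_{k=0}^{K} \frac{4^k d^2}{V(2^k d)}$ where $K = \lfloor \log_2(N/d)\rfloor$.

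For the upper bound I would use the growth hypothesis \eqref{>2}, which with $r = 2^k d$ and $s = d$ yields
$$\frac{V(2^k d)}{V(d)} \ge c\,2^{k(2+\epsilon)} = c\,4^k\, 2^{k\epsilon}.$$
Plugging this in,
$$\sum_{k=0}^{K}\frac{4^k d^2}{V(2^k d)} \;\le\; \frac{d^2}{cV(d)}\sum_{k=0}^{\infty} 2^{-k\epsilon} \;\le\; \frac{C' d^2}{V(d)},$$
where the geometric series converges precisely because of the extra $\epsilon > 0$. Since $d > 1$ we have $d^2 \approx 1 + d^2$, giving the upper bound.

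For the lower bound I would simply keep the first block $I_0 = [d^2, 4d^2)$, which is fully inside $[d^2, N^2]$ because $d < N/2$. On this block $V(\sqrt{\ell}) \approx V(d)$ by doubling, and $|I_0| \asymp d^2$, so
$$\sum_{\ell=d^2}^{N^2}\frac{1}{V(\sqrt{\ell})} \;\ge\; \sum_{\ell \in I_0}\frac{1}{V(\sqrt{\ell})} \;\gtrsim\; \frac{d^2}{V(d)} \;\asymp\; \frac{1+d^2}{V(d)}.$$

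The main obstacle is really just bookkeeping: checking that the last dyadic block $I_K$ may be only partially contained in $[d^2, N^2]$ (which is harmless for the upper bound since we sum over a superset, and irrelevant for the lower bound which uses only $I_0$), and that the hypothesis $V(s) \le CV(r)$ stated in the lemma — which I read as valid for $s\le r$ — together with $V(2r)\le CV(r)$ is what delivers the uniform comparability of $V$ within each dyadic block. The crucial structural point is that the exponent $2+\epsilon$ (strictly greater than $2$) is exactly what makes the dyadic sum geometric rather than logarithmic; without the $\epsilon$, one would instead get a sum $\approx (d^2/V(d))\log(N/d)$.
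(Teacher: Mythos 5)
Your proof is correct and takes essentially the same route as the paper: a dyadic decomposition of the index (in $\sqrt{\ell}$ in your version, in $\ell$ in theirs, which is the same thing up to a reparametrization), the growth hypothesis $V(r)/V(s)\ge c(r/s)^{2+\epsilon}$ to turn the block sum into a convergent geometric series for the upper bound, and the observation that the first block alone already contributes on the order of $d^2/V(d)$ for the lower bound. The bookkeeping points you raise (the possibly truncated last block, and the role of quasi-monotonicity in making $V$ comparable across a dyadic block) are handled correctly and match what the paper's proof implicitly relies on.
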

\begin{proof}  Write
\begin{eqnarray*}   \sum_{\ell=d^2}^{N^2}\frac{1}{V(\sqrt{\ell})} &=& \frac{1}{V(d) }\sum_{\ell=d^2}^{N^2}\frac{V(d)}{V(\sqrt{\ell})}\\
&\le & \frac{C}{V(d) } \sum_{k=0}^{2\log_2 (N/d)} \sum_{\ell: \ell \approx  d^22^k} \left(\frac{d^2}{d^22^k}\right)^{2+\epsilon}  \\
&\approx &\frac{ C' d^2}{ V(d)} .
\end{eqnarray*}
The matching lower bound  follows from the quasi-monotonicity of  $V$ with ${d\le N/2}$ because it implies that the sum contains at least $d^2$ terms of size at least order $1/V(d)$. (We say that $A(x)$ is quasi-monotone if there exists $c>0$ such that, for all $s<r$, $A(x_s) \leq cA(x_r)$.)
\end{proof}

\begin{rem} \label{rem-easy}
The lemma is often useful in applying Theorem \ref{theo-HMarb}.  The question is whether or not the function $r\mapsto  \phi_0(x_r)^2V(x,r)$ satisfies the hypotheses of the lemma.
But remember that  $\phi_0(x_k)^2V(x,k)\approx \pi(\phi_0^2 \mathbf 1_{B(x,k)})$  and Theorems~\ref{th-H} and~\ref{th-C}  state that this function is doubling (it is obviously quasi-monotone). In fact,
this function is the product of two functions $r\mapsto \phi_0(x_r)^2$ and $r\mapsto V(x,r)$, each of which is quasi-monotone and doubling. If any one of these two functions, by itself, satisfies (\ref{>2}),
the product does also.   If say,   $V(x,r)\approx r^2$, then it suffices to establish that   $\phi_0(x_r)/\phi_0(x_s)\ge c (r/s)^\eta$ for some $\eta>0$.    In any such situation, the conclusion of Theorem \ref{theo-HMarb} simplifies to read
\begin{equation} \label{easy-P}
P_U(x,y^*_z)\approx \phi_0(x) \phi_0(z)\mu_{zy}\left\{ T_U+\frac{ 1+d_U(x,z)^2}{\phi_0(x_{d_U(x,z)})^2\pi(B(x,d_U(x,z)))}\right\}.
\end{equation}
\end{rem}

\subsection{Examples}\label{sec-ex}

\subsubsection*{Three-player gambler's ruin problem} 

 \begin{figure}[h] 
\begin{center}
\begin{tikzpicture}[scale=.35]

\foreach \x in {1,2,3,4,5,6,7,8,9,10,11,12,13,14,15,16,17,18,19,20}
{\draw  (\x,0) -- (0,\x);
\draw (\x,0) -- (\x,20-\x);
\draw (0,\x)--(20-\x,\x);
}
\draw (0,0)--(0,20);
\draw (0,0)-- (20,0);

\foreach \x in {1,2,3,4,5,6,7,8,9,10,11,12,13,14,15,16,17,18,19}
{\draw [fill, blue] (\x,0) circle [radius=.2];}

\foreach \x in {1,2,3,4,5,6,7,8,9}
{\draw  [fill, yellow] (\x,1) circle [radius=.2];}
\foreach \x in {1,2,3,4,5,6,7,8,9}
{\draw  [fill, yellow] (\x,2) circle [radius=.2];}
\foreach \x in {1,2,3,4,5,6,7,8}
{\draw  [fill, yellow] (\x,3) circle [radius=.2];}
\foreach \x in {1,2,3,4,5,6,7,8}
{\draw  [fill, yellow] (\x,4) circle [radius=.2];}
\foreach \x in {1,2,3,4,5,6,7}
{\draw  [fill, yellow] (\x,5) circle [radius=.2];}
\foreach \x in {1,2,3,4,5,6,7}
{\draw  [fill, yellow] (\x,6) circle [radius=.2];}
\foreach \x in {1,2,3,4,5,6}
{\draw  [fill, yellow] (\x,7) circle [radius=.2];}
\foreach \x in {1,2,3,4,5,6}
{\draw  [fill, yellow] (\x,8) circle [radius=.2];}
\foreach \x in {1,2,3,4,5}
{\draw  [fill, yellow] (\x,9) circle [radius=.2];}
\foreach \x in {1,2,3,4,5}
{\draw  [fill, yellow] (\x,10) circle [radius=.2];}
\foreach \x in {1,2,3,4}
{\draw  [fill, yellow] (\x,11) circle [radius=.2];}
\foreach \x in {1,2,3,4}
{\draw  [fill, yellow] (\x,12) circle [radius=.2];}
\foreach \x in {1,2,3}
{\draw  [fill, yellow] (\x,13) circle [radius=.2];}
\foreach \x in {1,2,3}
{\draw  [fill, yellow] (\x,14) circle [radius=.2];}
\foreach \x in {1,2}
{\draw  [fill, yellow] (\x,15) circle [radius=.2];}
\foreach \x in {1,2}
{\draw  [fill, yellow] (\x,16) circle [radius=.2];}
\foreach \x in {1}
{\draw  [fill, yellow] (\x,17) circle [radius=.2];}
\foreach \x in {1}
{\draw  [fill, yellow] (\x,18) circle [radius=.2];}

\end{tikzpicture}
\caption{The gambler's ruin problem with $3$ players, with starting points $x$ in yellow and exit points $y$ in blue. If we know $P_U(x,y)$ for all yellow $x$ and blue $y$, then all other possibilities $P_U(x',y')$ can be obtained by symmetry.} \label{GR3}
\end{center}
\end{figure}
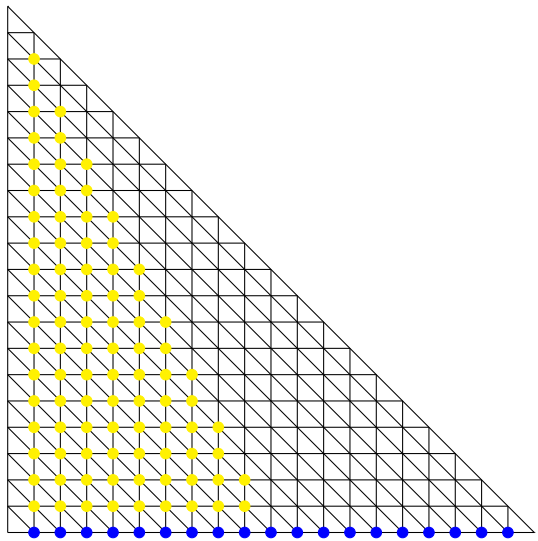

 We return to Example \ref{exa-GR}, the three-player gambler's ruin problem which evolves in the triangle $$U=\{(x_1,x_2): 0< x_1,0\le x_2, x_1+x_2< N\}.$$ In Example~\ref{exa-GR} we gave approximations to the harmonic measure starting from $N/4,N/4$. We here complete this, giving uniform estimates from any start. The natural symmetries of the problem imply that each of the three corners of the triangle are equivalent (under appropriate transformations) so we can focus on the  the corner at the origin. We will describe two-sided bounds on the harmonic measure $P_U(x,y)$
when $x=(x_1,x_2)$ with $0<x_1, 0<x_2$, $2x_1+x_2\le N$,   and $y=(y_1,0)$, $0<y_1<N$.   See Figure \ref{GR3}. In this example, $R\approx N$, $T_U\approx N^2$, $\mu_{zy}\approx 1$, $\pi(B(x,r))\approx r^2$.  
Each boundary point $y$ corresponds to either one or two
extended boundary points. For any $y$ which has two extended boundary points $\{z,y\}$ and $\{z',y\}$, the internal points $z,z'$ are neighbors in $U$. This means there is no real need to distinguish them when estimating $P_U(x,y)$.   For each $y=(y_1,0)$,  $0<y_1<N$, we set  $z_y=(y_1,1)$ and $z'=(y_1-1,1)$ with the convention that $z_{(1,0)}=z'_{(1,0)}=(1,1)$ and $z_{(N-1,0)}=z'_{(N-1,0)}=(N-2,1)$. Next, we appeal to estimate (\ref{GRphi}) to control $\phi_0$.
For $z=(z_1,1)$,  $0<z_1<N-1$, $$\phi_0(z) \approx  N^{-6} z_1^2(N-z_1)^2.$$ 
For $x=(x_1,x_2)$ with $0<x_1, 0<x_2,  2x_1+x_2\leq N$, 
$$\phi_0(x)\approx   N^{-6} x_1x_2(x_1+x_2)(N-(x_1+x_2))(N-x_2).$$
Remark \ref{rem-easy} applies to this example and we can use (\ref{easy-P}).  
Assume first that $d=d_U(x,z_y)\ge N/8$. In this case,  we have
\begin{eqnarray*}
P_U(x,y) &\approx & N^2 \phi_0(x)\phi_0(z_y)\\ & \approx & N^{-10} x_1x_2(x_1+x_2)(N-(x_1+x_2))(N-x_2)y_1^2(N-y_1)^2.\end{eqnarray*}
 Assume instead that $d=d_U(x,z_y)\le N/8$. In that case $ |x_1-z_1|+|x_2-1|\le 2d \le N/4$ and $\phi_0(x_d)\approx \phi_0((x_1+d,x_2+d)) $.  It follows that
 \begin{eqnarray*}
 P_U(x,y) &\approx &\phi_0(x)\phi_0(z_y)  \frac{(1+d^2)}{\phi_0(x_d)^2(1+d^2)}\\
 &\approx &  \frac{ x_1x_2(x_1+x_2)y_1^2 }{ (x_1+d)^2(x_2+d)^2(x_1+x_2+2d)^2  }.  \end{eqnarray*}
It is possible summarize the two case via one formula. Namely,
for all $x=(x_1,x_2)$ with $0<x_1, 0<x_2,  2x_1+x_2\le N$ and  $y=(y_1,0)$,  $0<y_1<N$, $d=d_U(x,z_y)$,
\begin{equation} \label{GRP}
P_U(x,y)\approx  \frac{x_1x_2(x_1+x_2)(N-(x_1+x_2))(N-x_2)y_1^2(N-y_1)^2}{ N^4 (x_1+d)^2(x_2+d)^2(x_1+x_2+2d)^2  }.  \end{equation}

Note that, despite appearances, $y$ appears in both the numerator and the denominator of~\eqref{GRP}. For example, if $x=(x_1,x_2)=(1,1)$ (i.e., the random walk starts in the lower left corner), then $$P_U(x,y) \approx \frac{(N-y_1)^2}{N^2y_1^4},$$ where $y=(y_1,0)$. Thus, absorption is most likely for small $y_1$ and falls off like $y_1^4$ when $y_1$ is of order $N$. Similarly, if $x=(x_1,x_2)=(1,N-2)$ (i.e., the random walk starts in the upper left corner), then $$P_U(x,y) \approx \frac{y_1^2(N-y_1)^2}{N^6},$$ where $y=(y_1,0)$. Recall from Example~\ref{exa-GR} that $$P_U(x,y) \approx \frac{y_1^2(N-y_1)^2}{N^5}$$ when $x=(x_1,x_2)=([N/4],[N/4])$ and $y=(y_1,0)$, which aligns with~\eqref{GRP}.
 
 \subsubsection{The square and cube with the center removed}
 
 Consider the cube with the center removed, $$U=\{-N,\dots,N\}^n\setminus\{(0,\dots,0)\}$$ in dimension $n\ge 2$. The boundary is 
 $$\partial U=\{(0,\dots,0)\}\bigcup\left(\bigcup_{i}^n  F_i\right),$$
 $$F_{\pm i}=\{x=(x_j)_1^n: x_j\in\{-N,\dots,N\} \mbox{ for } j\neq i; x_i=\pm (N+1)\}.$$
 Here, $\mathfrak X=\mathbb Z^n$ equipped with its natural edge set $\mathfrak E=\{\{x,y\}: \sum_1^n|x_i-y_i|=1\}$. The measure $\pi$ is counting measure and 
 we can take either $\mu_{xy}=\frac{1}{2n}$ (in which case the chain is periodic of period $2$) or an aperiodic version with $\mu_{xy}=\frac{1}{\kappa n}$, $\kappa \in (2,4)$, say.  In any of these case, $(\mathfrak X,\mathfrak E,\pi,\mu)$ is a $2$-Harnack graph and the Perron-Frobenius eigenvalue $\beta_0$ of $U$ satisfies
 $$1-\beta_0\approx  \frac{1}{N^2}.$$
This translates into $T_U\approx N^2$.  It is a bit more challenging to describe a good global two-sided estimate for the Perron-Frobenius eigenfunction $\phi_0$. The estimates differ in dimension $n\geq2$.
When $n\ge 3$ (recall the normalization $\pi(\phi_0^2)=1$), we have (see \cite{DHSZ})
\begin{eqnarray*}
\phi_0(x)&\approx_n &  \frac{1}{N^{n/2}} \left(1-\frac{1}{(1+|x|)^{n-2}}\right) \prod_1^n \left(1-\frac{|x_i|}{N+1}\right) \\
&\approx_n & \mathbf 1_{\{\mathbf 0\}}(x)\frac{1}{N^{n/2}}\prod_1^n \left(1-\frac{|x_i|}{N+1}\right).\end{eqnarray*}
In this two-sided bound, $|x|=\sum_1^n|x_i|$ and the implied constant depends on the dimension $n$.

Similarly, for $n=2$,
$$\phi_0(x)\approx  \frac{1}{N}\left(1-\frac{|x_1|}{N+1}\right) \left(1-\frac{|x_2|}{N+1}\right) \frac{\log (1+|x|)}{\log (1+N)}.$$
 
 We now use these estimates to state two-sided bounds for $P_U(x,y)$ for~${x\in U}$, ${y\in \partial_U}$.  We can let $x$ be arbitrary in $U$ and assume that $y$ belongs either to the top face $F_n=\{y=(\bar{y},N+1): \bar{y}\in \{-N,\dots,N\}^{n-1}\}$, or is equal to the central point $\mathbf 0=(0,\dots,0)$.
 
 When $n\ge 3$, Remark \ref{rem-easy} applies  and Theorem \ref{theo-HMarb}
 gives (see (\ref{easy-P})
 $$
P_U(x,y^*_z)\approx \phi_0(x) \phi_0(z)\left\{ N^2+\frac{1+|x-z|^2}{\phi_0(x_{|x-z|})^2(1+|x-z|^n)}\right\}.
$$
At $y=\mathbf 0$ and for each of its $2n$ neighbors $z$ with all coordinates zero except one equal to $\pm 1$  (recall that the point $x$ is in $U=\{-N,\dots,N\}^n\setminus\{\mathbf 0\}$),
$$P_U(x,\mathbf 0^*_z)\approx   \phi_0(x)N^{n/2} |x|^{2-n}\approx  \prod_1^n \left(1-\frac{|x_i|}{N+1}\right) |x|^{2-n}.  $$
At a point $y$ on the top face $F_n$, there is a unique neighbor $z$ of $y$ lying in $U$ and 
$$P_U(x,y)\approx  \frac{\prod_1^n \left(1-\frac{|x_i|}{N+1}\right) \prod_1^{n-1} \left(1-\frac{|y_i|}{N+1}\right)}{(N+1)
\prod_1^n \left(1-\frac{|x_i|-|x-y|}{N+1}\right)^2} |x-y|^{2-n}.$$

As an illustrative example, consider the case when  $k$ of the coordinates of $x$ are equal to  $N+1-r$,  $\ell$ of the first $n-1$ coordinates of $y$ are equal to $N$  (by assumption $y_n=N+1$), the remaining coordinates of $x$ and $y$ are less than $N/2$ and  $|x-y|$ is greater than $N/2$.    For such a configuration,
$$P_U(x,y)\approx  \left(\frac{1}{N+1}\right)^{n-1 +\ell}\left(\frac{r}{N+1}\right)^k .$$

In the case $n=2$,  we need to understand the quantity
$$S(x,d)= \sum_{\ell=d^2}^{8N^2} \frac{1}{\phi_0(x_{\sqrt{\ell}})^2(1+ \ell)},$$ where $d=d_U(x,z)$. When $d\ge N/4$,  $S(x,d)\approx   N^2$.  When $d<N/4$ and $z $ is a neighbor  of $\mathbf 0$,  
$$S(x,d) \approx (N\log N)^2\sum_{d^2}^{8N^2} \frac{1}{\ell( \log \ell )^2}\approx  (N \log N)^2  \frac{1+\log (1+2N/d) }{(1+\log N )(1+\log (1+d))} .$$
When $0\le d<N/4$ and $y$ is on one of the four  faces $F_{\pm i}$, $i=1,2$, we have $ |x_1-y_1|\le N/4$, $|x_2-y_2|\le N/4$ and this implies $|x|\ge N/2$.  Since one of $y_1,y_2$ equals $\pm (N+1)$, it follows that 
one of $|x_i-y_i|$ equals $N+1 -|x_i| $ which must be less than $d+1$. Now,  for $\ell\ge d^2$, we have
$$\frac{\phi_0(x_{\sqrt{\ell}})}{\phi_0(x_d)} \approx \frac {(N+1-|x_1| +\sqrt{\ell})(N+1-|x_2|+\sqrt{\ell})}{(N+1-|x_1|+d)(N+1-|x_2|+ d)}\ge \frac{1}{2} \frac{1+\sqrt{\ell}}{1+d}.$$
Indeed, assume for instance that for $i=1$, $ N+1-|x_1| \le d+1$. Then, for $\sqrt{\ell}\ge d\ge N-|x_1|$,  
\begin{eqnarray*}
 \frac {(N+1-|x_1| +\sqrt{\ell})(N+1-|x_2|+\sqrt{\ell})}{(N+1-|x_1|+d)(N+1-|x_2|+ d)}&\ge & \frac {N+1-|x_1| +\sqrt{\ell}}{N+1-|x_1|+d}  \\
 &\ge  &  \frac{1+\sqrt{\ell}}{2(1+d)}.\end{eqnarray*}
Now write
\begin{eqnarray*}S(x,d) &= &  \frac{1}{(1+d)^2 \phi_0(x_d)^2}\sum_{\ell=d^2}^{8N^2} \frac{ \phi_0(x_d)^2(1+d)^2}{\phi_0(x_{\sqrt{\ell}})^2(1+\ell)} \\
&\le &   \frac{C}{(1+d)^2 \phi_0(x_d)^2} \sum_{d^2}^{8N^2} \left(\frac{ 1+ d}{1+\sqrt{\ell}}\right)^{4}\\
&\approx& \frac{C}{ \phi_0(x_d)^2}.
\end{eqnarray*}
The conclusion is that, when $y=\mathbf 0$,
$$S(x,d) \approx   N^2 \log N \frac{1+\log (1+2N/d) }{1+\log (1+d)} $$
and
$$P_U(x,\mathbf 0)\approx  \left(1-\frac{|x_1|}{N+1}\right) \left(1-\frac{|x_2|}{N+1}\right) \frac{1+\log (1+2N/|x|) }{(1+\log N)(1+\log (1+|x|))}.$$
When $y$ is on $F_{\pm i}$, $i=1,2$,
whereas for $y$ on one of the faces $F_{\pm i}$, $i=1,2$, 
$$S(x,d) \approx  \frac{N^2}{\left(1- \frac{|x_1|-d}{N+1}\right)^2\left(1-\frac{|x_2|-d}{N+1}\right)^2} $$
and
$$P_U(x,y)\approx 
 \frac{\left(1-\frac{|x_1|}{N+1}\right) \left(1-\frac{|x_2|}{N+1}\right)  \left(1-\frac{|y_1|-1}{N+1}\right) \left(1-\frac{|y_2|-1}{N+1}\right)    \log (1+|x|)}{ \left(1- \frac{|x_1|-|x-y|}{N+1}\right)^2\left(1-\frac{|x_2|-|x-y|}{N+1}\right)^2 
\log (1+N)   }  $$

\subsection{Conclusion}  
For  reversible Markov chains  killed at the boundary of a finite subdomain $U$, the Doob-transform technique reduces estimates of  the Poisson kernel (harmonic measure) and its time-dependent versions to
estimates of a reversible ergodic (except perhaps for periodicity) Markov chain, where the estimates are determined explicitly in terms of the Perron-Frobenius eigenfunction $\phi_0$.  In general, neither the Perron-Frobenius eigenfunction nor the
resulting ergodic Markov chain are easily studied.  However, when the original Markov chain (or, equivalently, its underlying graph) satisfies a parabolic Harnack inequality, uniformly at all locations and scales, and the finite domain $U$
is an inner-uniform domain, it become possible to reduce all estimates solely to a good understanding of the Perron-Frobenius eigenfunction $\phi_0$. See, Theorems~\ref{theo-PUo} and~\ref{theo-HMarb}. When the finite domain $U$ has a reasonably simple geometry, a variety of relatively sophisticated tools are available to determine the behavior of $\phi_0$ and this leads to sharp two-sided estimates for the Poisson kernel and its time dependent variants.  

In many cases of interest, global estimates  of the Perron-Frobenius eigenfunction $\phi_0$ remain a difficult challenge. The results proved here provide further justifications for attempting to tackle this challenge.  The gambler's ruin problem with four (or more)  players  is a good example of such a  problem. It is amenable to the techniques developed above and it is possible to show that the function $\phi_0$ vanishes in a manner similar to 
different power functions near distinct parts of the boundary. In this and other similar examples, computing the various exponents and putting together
these bits of information to get a global two-sided estimate of $\phi_0$ is a challenging problem.

\section*{Acknowledgements}
Laurent Saloff-Coste was partially supported by NSF grant DMS-1707589. Kelsey Houston-Edwards was partially supported by NSF grants DMS-0739164 and DMS-1645643.

\bibliographystyle{plain}

\bibliography{DHS2}

\end{document}